\DeclareSymbolFontAlphabet{\mathbb}{AMSb}
\DeclareSymbolFontAlphabet{\mathbbl}{bbold}
\theoremstyle{plain} 
\newtheorem{theorem}{Theorem}[section]
\newtheorem{proposition}[theorem]{Proposition}
\newtheorem{lemma}[theorem]{Lemma}
\newtheorem{corollary}[theorem]{Corollary}
\newtheorem{conjecture}[theorem]{Conjecture}
 \theoremstyle{definition}
\newtheorem{definition}[theorem]{Definition}
\newtheorem{example}[theorem]{Example}
\newtheorem{remark}[theorem]{Remark}
 \newcommand{\Aut}{\mathrm{Aut}}
\newcommand{\Av}{\mathrm{Av}} \newcommand{\Bun}{\mathrm{Bun}}
\newcommand{\bs}{\blacksquare} 
 \newcommand{\CT}{\mathrm{CT}}
\newcommand{\Div}{\mathrm{Div}} 
\newcommand{\Eis}{\mathrm{Eis}}
\newcommand{\et}{\mathrm{{\acute{e}t}}} \newcommand{\F}{\mathbb{F}}
\newcommand{\Flbar}{\overline{\mathbb{F}}_\ell}
 \newcommand{\GL}{\mathrm{GL}}
\newcommand{\Gm}{{\mathbb{G}_m}} 
\newcommand{\Hom}{\mathrm{Hom}} 
 \newcommand{\lis}{\textrm{lis}}
\newcommand{\Mod}{\mathrm{Mod}} 
\newcommand{\nat}{\natural} \newcommand{\Perf}{\mathrm{Perf}}
 \newcommand{\Q}{\mathbb{Q}}
\newcommand{\Qlbar}{\overline{\Q}_\ell}
\newcommand{\Zlbar}{\overline{\Z}_\ell}
 \newcommand{\Rep}{\mathrm{Rep}}
 \newcommand{\Spec}{\mathrm{Spec}}
 \newcommand{\Z}{\mathbb{Z}}
\begin{document}

\title[Averaging functors in Fargues' program for $GL_n$]{Averaging
  functors in Fargues' program for $\GL_n$}

\author{Johannes Ansch\"{u}tz, Arthur-C\'esar Le Bras}
\email{ja@math.uni-bonn.de} \email{lebras@math.uni-bonn.de}
\date{\today}

\begin{abstract}
 We study the so-called averaging functors from the geometric Langlands program in the setting of Fargues' program. This makes explicit certain cases of the spectral action which was recently introduced by Fargues-Scholze in the local Langlands program for $\mathrm{GL}_n$. Using these averaging functors, we verify (without using local Langlands) that the Fargues-Scholze parameters associated to supercuspidal modular representations of $\mathrm{GL}_2$ are irreducible. We also attach to any irreducible $\ell$-adic Weil representation of degree $n$ an Hecke eigensheaf on $\Bun_n$, and show, using the local Langlands correspondence and recent results of Hansen and Hansen-Kaletha-Weinstein, that it satisfies most of the requirements of Fargues' conjecture for $\GL_n$.  
\end{abstract}

\maketitle

\tableofcontents

\section{Introduction}
\label{sec:introduction}
Let $p$ be a prime and fix a non-archimedean local field $E$ of residue characteristic $p$.
Fix an algebraic closure $\overline{E}$ of $E$ with associated Weil group $W_E$, and a prime $\ell\neq p$.
In \cite{fargues_geometrization_of_the_local_langlands_correspondence_overview}, Fargues formulated his program to geometrize the local Langlands correspondence for $E$ via the stack $\Bun_G$ of $G$-torsors on the Fargues-Fontaine curve, with $G$ a reductive group over $E$. Roughly, he suggested that the geometric Langlands program as initiated by Drinfeld \cite{drinfeld_two_dimensional_l_adic_representations} and Laumon \cite{laumon_duke} can be suitably extended to the situation where one replaces the usual smooth projective curve over a finite field by the Fargues-Fontaine curve, and that this should yield results on the local Langlands program for $E$.

Fargues' program has been put forward in the seminal recent work of Fargues-Scholze \cite{fargues2021geometrization}. One of the main theorems of \cite{fargues2021geometrization} is the existence of the spectral action of the (Ind)-category of perfect complexes on the stack of $L$-parameters for a reductive group $G$ over $E$ on the category
\[
  D_{\mathrm{lis}}(\Bun_G,L)
\]
of ``lisse'' sheaves with coefficients in an arbitrary $\Z_\ell$-algebra $L$\footnote{Depending on $G$ one has to put some assumption on $\ell$. However, for $G=\GL_n$ each $\ell$ works.}.
Assume from now on that $L$ is a field, e.g., $L=\Flbar$ or $L=\Qlbar$. When writing ``representation'', we implicitly mean ``representation over $L$'' in the following.

The existence of the spectral action naturally implies that one can construct an $L$-parameter $\varphi_\pi\colon W_E\to {}^LG$ for any irreducible, smooth $G(E)$-representation $\pi$. For $G=\GL_n$ and $\pi$ irreducible, supercuspidal, the knowledge of the cohomology of the Lubin-Tate spaces implies that $\varphi_\pi$ agrees with the previously constructed local Langlands correspondent of $\pi$ (cf. \cite[Theorem 9.7.4]{fargues2021geometrization}). In particular, $\varphi_\pi$ is irreducible in this case.
Our first main result provides for $n=2$ an alternative proof of this fact.

\begin{theorem}[{cf.\ \Cref{sec:appl-irred-farg-on-cusp-hecke-action-contains-no-rank-1-stuff}}]
  \label{sec:introduction-1-main-theorem-on-irreducibility}
  Let $\pi$ be an irreducible supercuspidal representation of $\GL_2(E)$ over an algebraically closed field $L$ of characteristic $\ell$. Then the Fargues-Scholze parameter
  \[
    \varphi_\pi\colon W_E\to \GL_2(L)
  \]
  associated with $\pi$ is irreducible.
\end{theorem}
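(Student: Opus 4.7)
The plan is to argue by contradiction: assume $\varphi_\pi \cong \chi_1 \oplus \chi_2$ for some continuous characters $\chi_1, \chi_2 \colon W_E \to L^\times$, and derive a contradiction with the supercuspidality of $\pi$. The strategy is to leverage the averaging functors developed in the paper to show that any $\pi$ whose Fargues-Scholze parameter decomposes as a sum of two characters must in fact arise as a subquotient of a parabolic induction from the Borel subgroup of $\GL_2$, violating supercuspidality.

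First I would use the spectral action of the category of perfect complexes on the stack of $L$-parameters for $\GL_2$ on $D_{\lis}(\Bun_{\GL_2}, L)$ to translate the splitting $\varphi_\pi \cong \chi_1 \oplus \chi_2$ into extra structure on the sheaf $\mathcal{F}_\pi$ attached to $\pi$. Concretely, the idempotents in $\mathrm{End}(\varphi_\pi)$ projecting onto the rank-one summands $\chi_i$ act via the spectral action on $\mathcal{F}_\pi$ through Hecke operators associated to the standard representation of $\GL_2$, and yield a splitting of that Hecke action into rank-one pieces governed respectively by $\chi_1$ and $\chi_2$. Then, using the compatibility between the spectral action and the averaging functor $\Av$ from the standard torus $T$ of $\GL_2$ to $\GL_2$ — which is precisely where the paper makes cases of the spectral action explicit — I would show that this rank-one structure forces $\mathcal{F}_\pi$ into the essential image of $\Av$ applied to the character sheaf on $\Bun_T$ attached to the pair $(\chi_1,\chi_2)$.

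Finally, I would conclude by matching the image of $\Av$ on the representation-theoretic side with (subquotients of) parabolic inductions from the Borel, which by definition are disjoint from the supercuspidal locus; this yields the contradiction. The main obstacle I expect is the middle step: showing that the rank-one factorisation of the Hecke eigenvalue really places $\mathcal{F}_\pi$ in the image of the averaging functor, rather than merely implying the existence of a compatible Hecke action. This requires a careful geometric analysis of the averaging functors, their targets on the different Harder-Narasimhan strata of $\Bun_{\GL_2}$, and their interaction with the spectral action — exactly the content that the paper develops in order to make the argument run.
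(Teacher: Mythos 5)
There is a genuine gap, and it stems from a misidentification of the central object. The averaging functor $\mathrm{Av}_{\mathbb{L}}$ in this paper is an \emph{endofunctor} of $D_\et(\Bun_n,\Lambda)$, defined by convolution against the Hecke modification correspondence $\Mod^1_n$ tensored with $\mathbb{L}$; it is not a functor ``from the standard torus $T$ to $\GL_2$.'' What you are describing sounds instead like the geometric Eisenstein series functor $\Eis_{P,\ast}$ (or its left adjoint $\CT_{P,!}$), which does go between $\Bun_T$ and $\Bun_{\GL_2}$. Because of this confusion, the core of your plan --- showing that a split parameter forces $\mathcal{F}_\pi$ into the essential image of $\mathrm{Av}$ applied to a character sheaf on $\Bun_T$, and hence into the Eisenstein part --- is not pointed at the right mechanism, and it is not clear how it could be repaired without replacing $\mathrm{Av}$ with $\Eis_{P,\ast}$ throughout (at which point the argument is a standard but difficult ``cuspidal implies not Eisenstein'' story, not the averaging argument).

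The paper's proof runs differently and without a reductio. It first recasts supercuspidality of $\pi$ as \emph{geometric cuspidality} of $j_!\mathcal{F}_\pi$, i.e.\ vanishing of the constant terms $\CT_{P,!}$ (Example~\ref{sec:geom-cusp-repr-geom-cusp-representations-for-n-2}). It then proves the crucial vanishing: if $\mathcal{F}$ is cuspidal and $\mathbb{L}_\chi$ has rank $1$, then $\mathrm{Av}_{\mathbb{L}_\chi}(\mathcal{F})=0$ (Theorem~\ref{sec:aver-funct-triv-1-av-functor-vanishes-for-n-2-e-trivial}). This is the deep geometric input, following Gaitsgory's vanishing argument; it relies on stratifying the modification space by the rank of the intermediate factorization and on the vanishing of cohomology of positive Banach--Colmez spaces against cuspidal sheaves. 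Combined with the purely formal vanishing of $\mathrm{Av}_{\mathbb{L}}$ for irreducible $\mathbb{L}$ of rank $>n$ (which does come straight from the spectral action, as you guessed), a short homological lemma (Lemma~\ref{sec:appl-irred-farg-complex-of-lambda-modules-with-vanishing-averaging-functors}) shows every irreducible subquotient of the cohomology of the Hecke eigenvalue $T_{V_{\mathrm{std}}}(\mathcal{F}_\pi)$ has rank exactly $2$. Your proposal never surfaces the key vanishing for rank-one $\mathbb{L}_\chi$ on cuspidal objects, which is the whole point of the averaging-functor machinery here. Finally, even after establishing that the eigenvalue is rank-$2$ irreducible, translating this into irreducibility of the parameter $\varphi_\pi$ requires care in positive characteristic: the paper gives a trace argument for $\ell\neq 2$ and a separate lifting argument for $\ell=2$; your outline does not address this step at all.
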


The assumption that $L$ has characteristic $\ell$ comes from foundational problems in the theory of ``lisse'' sheaves (more specifically, problems related to excision). Though the result of \Cref{sec:introduction-1-main-theorem-on-irreducibility} is not so interesting in its own right, we think that its strategy of proof is.
Namely, we completely bypass the knowledge of the cohomology of the Lubin-Tate tower (and thus also any global methods).
To implement our strategy we first have to reinterpret the condition of being supercuspidal geometrically. Let $L$ be algebraically closed of characteristic $\ell$, as in the theorem. We do this using the geometric constant term functors, which are familiar from the geometric Langlands program, and prove that a smooth irreducible representation of $\GL_2(E)$ is supercuspidal (in the sense of modular representation theory) if and only if its corresponding object in $D_\et(\Bun_2,L)$ is a cuspidal object (in the sense of the geometric Langlands program), cf.\ \Cref{sec:geom-cusp-repr-geom-cusp-representations-for-n-2}. It's natural to expect that such a statement is true for $n\geq 3$ as well. Let
\[
  D_{\et,\mathrm{cusp}}(\Bun_2,L)\subseteq D_\et(\Bun_2,L)
\]
be the full subcategory of cuspidal objects. The next step is to analyze how the averaging functor
\[
  \mathrm{Av}_{\mathbb{L}}\colon D_\et(\Bun_2,L)\to D_\et(\Bun_2,L)
\]
associated with some $W_E$-representation $\mathbb{L}$ acts on $D_{\et,\mathrm{cusp}}(\Bun_2,L)$. Averaging functors have been introduced in the geometric Langlands program by Frenkel-Gaitsgory-Vilonen \cite{frenkel_gaitsgory_vilonen} and they have been further studied in \cite{gaitsgory_on_a_vanishing_conjecture}. Translating some known geometric arguments about them to the (easier case of the) Fargues-Fontaine curve, we can prove that
\[
  \mathrm{Av}_{\mathbb{L}}
\]
vanishes on $D_{\et,\mathrm{cusp}}(\Bun_2,L)$ if $\mathbb{L}$ is irreducible of dimension $\neq 2$. From here we can then conclude \Cref{sec:introduction-1-main-theorem-on-irreducibility}.

The averaging functors feature also prominently in the proof of our second main result, where we may assume that $n\geq 1$ is arbitrary. Before stating it, let us recall that for each $b \in B(\GL_n)$ basic, the substack $j_b: \Bun_n^b \hookrightarrow \Bun_n$ parametrizing vector bundles fiberwise on the base isomorphic to $\mathcal{E}_b$ is open and isomorphic to classifying stack
$$
[\ast/\underline{G_b(E)} ],
$$ 
where $G_b$ denotes the $\sigma$-centralizer of $b$ (an inner form of $\GL_n$), inducing an equivalence
$$
D(\mathrm{Rep}_L^\infty(G_b(E))) \overset{\sim}\to D_\lis(\Bun_n^b,L), ~ \pi \mapsto \mathcal{F}_\pi,
$$
where $\mathrm{Rep}_L^\infty(G_b(E))$ is the abelian category of smooth $L$-representations of the locally profinite group $G_b(E)$.

\begin{theorem}[{cf.\ \Cref{sec:proof-fargues-conjecture-irreducible-case}}]
  \label{sec:introduction-2-main-result-on-fargues-sheaf}
  Assume that $\mathbb{L}$ is an irreducible, continuous $W_E$-representation over $\Qlbar$. Then there exists a non-zero Hecke eigensheaf $\mathrm{Aut}_{\mathbb{L}}\in D_\lis(\Bun_{n},\Qlbar)$ with eigenvalue $\mathbb{L}$. It is supported on the semi-stable locus and for each $b\in B(G)$ basic,
    \[
      j^\ast_b \Aut_{\mathbb{L}} \cong \mathcal{F}_{\mathrm{LL}_b(\mathbb{L})},
    \]
    where $\mathrm{LL}_b(\mathbb{L})$ is the (generalized Jacquet-)Langlands correspondent of
    $\mathbb{L}$ (a smooth irreducible $\Qlbar$-representation of $G_b(E)$).
 
\end{theorem}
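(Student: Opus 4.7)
My plan is to construct $\Aut_\mathbb{L}$ explicitly as an extension by zero from the semi-stable locus, using local Langlands to fix the representation on each basic stratum, and then verify the Hecke eigenproperty stratum-by-stratum via the Kottwitz conjecture proved by Hansen-Kaletha-Weinstein.

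\textbf{Construction.} Since $\mathbb{L}$ is irreducible of dimension $n$, its Langlands correspondent $\mathrm{LL}(\mathbb{L})$ is a supercuspidal representation of $\GL_n(E)$, and admits generalized Jacquet-Langlands lifts $\pi_b := \mathrm{LL}_b(\mathbb{L})$ (smooth irreducible, essentially discrete series) to every $G_b(E)$ for $b \in B(\GL_n)$ basic. Since the strata $\Bun_n^b$ for such $b$ are open in $\Bun_n$ and pairwise disjoint, I would set
\[
  \Aut_\mathbb{L} := \bigoplus_{b \in B(\GL_n)_{\mathrm{basic}}} j_{b!}\mathcal{F}_{\pi_b}
\]
(possibly with a normalizing shift). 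This object is nonzero, is supported on the semi-stable locus by construction, and the stalk condition $j_b^\ast \Aut_\mathbb{L} \cong \mathcal{F}_{\pi_b}$ is automatic, since $j_b^\ast j_{b'!} = 0$ for $b' \neq b$ among basic elements.

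\textbf{Hecke eigenproperty.} By the monoidal structure of the Hecke action it is enough to verify $T_V(\Aut_\mathbb{L}) \cong (V \circ \mathbb{L}) \boxtimes \Aut_\mathbb{L}$ for the fundamental representations of $\GL_n$, all of which are minuscule. For minuscule $\mu$, the restriction $T_{V_\mu}(j_{b!}\mathcal{F}_{\pi_b})|_{\Bun_n^{b'}}$ is computed by the cohomology of a moduli of modifications of type $\mu$ from $\mathcal{E}_b$ to $\mathcal{E}_{b'}$, paired with the smooth representation $\pi_b$. When $b'$ is basic, the Kottwitz conjecture (Hansen-Kaletha-Weinstein) identifies this with the appropriate $W_E$-summand of $V_\mu$ acting through $\mathbb{L}$, tensored with $\pi_{b'}$; summing over $b$ yields exactly $(V_\mu \circ \mathbb{L}) \boxtimes \mathcal{F}_{\pi_{b'}}$, as required. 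When $b'$ is non-basic, one needs the same cohomology paired with $\pi_b$ to vanish, which reflects the (generalized) supercuspidality of $\pi_b$; this is the content of Hansen's work and is analogous in spirit to the averaging functor vanishing established earlier in the paper for $\GL_2$. Together these two computations show that $T_{V_\mu}(\Aut_\mathbb{L})$ remains supported on the semi-stable locus and matches the expected eigenvalue formula.

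\textbf{Main obstacle.} The central difficulty will be the Hecke computation: extracting from the Kottwitz conjecture the precise matching of cohomology with $V_\mu \circ \mathbb{L}$ on all pairs of basic strata simultaneously, with correct shifts, Tate twists, and Galois-equivariant structure, and separately establishing the needed vanishing on non-basic strata. A secondary concern is checking compatibility with the symmetric monoidal structure when bootstrapping from fundamental representations to all of $\Rep(\GL_n)$, which should be essentially formal but requires care.
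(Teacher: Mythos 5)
Your construction inverts the paper's strategy, and runs into precisely the difficulty the paper itself flags. \Cref{remark-formula-for-aut-L} observes that conditions (2) and (3) of \Cref{conjecture-fargues} force the formula $\Aut_{\mathbb{L}} \cong \bigoplus_b j_{b,!}\mathcal{F}_{\mathrm{LL}_b(\mathbb{L})}$ --- so your ansatz would indeed produce the right object if a Hecke eigensheaf exists --- but, as the remark continues, taking this as the definition ``would make the Hecke eigensheaf property (1) hard to verify.'' The paper instead sets $\Aut_{\mathbb{L}} := k(\mathbb{L})_{\mathrm{reg}} \ast \mathcal{W}_\psi$ via the spectral action, for which the full Hecke eigenproperty (including naturality over all finite index sets $I$ and compatibility with fusion) is essentially tautological (\Cref{hecke-eigensheaf-property-for-k(E)-star-anything}); the content of the proof is then non-vanishing, obtained from the averaging functor $\Av_{\mathbb{L}^\vee}$ and the Lubin--Tate cohomology, after which the stalks are identified.

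The obstacle you name at the end is a genuine gap, not a matter of bookkeeping. First, what Hansen--Kaletha--Weinstein actually prove --- and what the paper cites --- is their Theorem 6.5.2: an equality of invariant distributions on the elliptic regular locus, i.e.\ a virtual character identity modulo non-elliptic terms. That suffices to pin down an irreducible supercuspidal representation once one already knows the stalk is one (the paper derives irreducibility of the stalk from \Cref{sec:farg-conj-gl_n-1-hecke-eigensheaf-with-irreducible-stalk}, which itself uses the Hecke eigenproperty for $T_{V_{\mathrm{std}}}$ and $T_{V^\ast_{\mathrm{std}}}$), but it is not a $W_E$-equivariant isomorphism of complexes in $D_{\lis}$, and certainly not the coherent family $\eta_{(V_i)_{i\in I}}$ that the Hecke eigensheaf property demands. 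Second, the vanishing of $T_{V_\mu}(j_{b,!}\mathcal{F}_{\mathrm{LL}_b(\mathbb{L})})$ on non-semistable strata is not ``the content of Hansen's work''; his theorem concerns degree-concentration of the basic stalks. In the paper that vanishing is \Cref{sec:gener-hecke-eigensh-1-hecke-eigensheaf-supported-on-semistable-locus}, whose proof invokes Fargues--Scholze's compatibility of excursion operators with parabolic induction applied to an object already known to be a Hecke eigensheaf with irreducible eigenvalue --- a circular move from your starting point. A direct vanishing statement in the spirit you want is proved in the paper only for $n=2$, rank-one kernels, and torsion coefficients (\Cref{sec:aver-funct-triv-1-av-functor-vanishes-for-n-2-e-trivial}, with \Cref{commutation-ct-averaging-d-lis} noting the obstruction over $\Qlbar$); the spectral-action definition is precisely what lets the paper avoid needing it in general.
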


The existence of an Hecke eigensheaf as in \Cref{sec:introduction-2-main-result-on-fargues-sheaf} is obtained by exploiting the spectral action of Fargues-Scholze. Let $X_{\GL_n,\Qlbar}$ be the Artin stack over $\Qlbar$ of $n$-dimensional $W_E$-representations. Let
\[
  \mathrm{IndPerf}(X_{\GL_n,\Qlbar})\times D_\lis(\Bun_n,\Qlbar)\to D_\lis(\Bun_n,\Qlbar), (\mathcal{A},\mathcal{F})\mapsto \mathcal{A}\ast \mathcal{F}
\]
be the spectral action, and let $\mathbb{L}$ be an irreducible $n$-dimensional continuous representation of $W_E$.
The representation $\mathbb{L}$ defines a smooth point $x\in \mathrm{IndPerf}(X_{\GL_n,\Qlbar})$, and we denote by $k(x)_{\mathrm{reg}}$ the regular representation of the residual gerbe at $x$. It is then rather formal to see that for each $\mathcal{F}\in D_{\lis}(\Bun_n,\Qlbar)$ the object
\[
  k(x)_{\mathrm{reg}}\ast \mathcal{F}
\]
is a Hecke eigensheaf with eigenvalue $\mathbb{L}$. Moreover, it must be supported on the semi-stable locus and given by supercuspidal representations there. However, it has no reason to be non-zero in general.

For $\mathcal{F}$ the Whittaker representation supported at the trivial rank $n$ bundle, we show that this Hecke eigensheaf is indeed non-zero and irreducible. Here, we make use again of an averaging functor, this time
\[
  \mathrm{Av}_{\mathbb{L}^\vee}\colon D_\lis(\Bun_n,\Qlbar)\to D_\lis(\Bun_n,\Qlbar)
\]
with $\mathbb{L}^\vee$ the dual representation of $\mathbb{L}$. Namely, $\mathrm{Av}_{\mathbb{L}^\vee}$ agrees with the spectral action of the sum of two simple skyscraper sheaves supported at $x$ and some cyclotomic twist of $\mathbb{L}$. This makes the action of $k(x)_{\mathrm{reg}}$ more tractable, and allows us to invoke the local Langlands correspondence and its known realization in the $\ell$-adic cohomology of the Lubin-Tate tower to show the desired non-vanishing. From here \Cref{sec:introduction-2-main-result-on-fargues-sheaf} follows rather formally, by combining the above with recent results of Hansen-Kaletha-Weinstein \cite{kaletha_weinstein_on_the_kottwitz_conjecture_for_local_shimura_varieties} and Hansen \cite{hansensupercuspidal}.

\subsection{Acknowledgements}
\label{sec:acknowledgments}
Special thanks go to Peter Scholze who patiently answered all our questions, especially those regarding \cite{fargues2021geometrization} when the preprint was not available. We moreover thank Sebastian Bartling, David Hansen, Eugen Hellmann and Dennis Gaitsgory for interesting discussions related to this paper. We also thank Laurent Fargues for his comments on a very early version of this paper and Marie-France Vign\'eras for pointing out an unnecessary hypothesis.

\section{Notation}
\label{sec:notation}

Throughout the text, we fix the following notations:
\begin{itemize}
\item $p$ is a prime,
\item $E$ is a local field with residue field $\F_q$ of characteristic
  $p$ and uniformizer $\pi$,
 \item $\breve{E}$ the completion of the maximal unramified extension of $E$,
\item $C$ is a completed algebraic closure of $E$,
  \item $W_E$ the Weil group of $E$ (with respect to $C$),
\item $k$ the residue field of $C$ ($k$ is therefore an algebraic closure of $\F_q$),
\item $\Perf_k$ is the category of perfectoid spaces over $k$,
\item $\ell$ is a prime, $\ell\neq p$,
\item $\Lambda$ is a \textit{torsion} $\Z_\ell[\sqrt{q}]$-algebra, i.e., $\ell^i\Lambda=0$ for some $i\geq 1$,
\item $L$ some $\Z_\ell[\sqrt{q}]$-algebra, e.g., $L=\Qlbar$,
\item $G:=\GL_n$ over $E$,
\item $\hat{G}:=\GL_n$ over $\Lambda$ (or occasionally $L$),
\item All our tensor products are derived, that is we constantly write $-\otimes-$ instead of $-\otimes^L-$.  
\end{itemize}

We note that we assume that $\Lambda$ or $L$ contains a fixed square root of $q$. This is necessary when we use Hecke operators via the geometric Satake equivalence.

\section{Geometrization of local Langlands: recollection}
\label{sec:farg-conj-gl_n}

\subsection{The stack of vector bundles on the Fargues-Fontaine curve}
\label{sec:stack-of-vector-bundles-on-the-ff-curve}
Let
\[
  \Bun_n
\]
be the small $v$-stack sending $S\in \Perf_k$ to the groupoid of vector
bundles of rank $n$ on the Fargues--Fontaine curve $X_S=X_{S,E}$
relative to $S$ (and the local field $E$). The geometry of the stack $\Bun_n$ has been extensively studied in \cite{fargues2021geometrization}. Let us recall in particular the following important facts:
\begin{enumerate}
\item There is a map
  $G(\breve{E})\to \Bun_{n}(k),\ b\mapsto \mathcal{E}_b$
  inducing a bijection
  \[
    B(G):=G(\breve{E})/{\sigma-\textrm{conjugacy}}\cong
    |\Bun_n|.
  \]
\item For $b\in B(G)$, write
  \[
    \Bun_n^b
  \]
  for the (locally closed) substack of vector bundles, which are
  $v$-locally isomorphic to $\mathcal{E}_b$. The locally closed
  inclusions
  \[
    j_b\colon \Bun_n^b\hookrightarrow \Bun_n
  \]
  induce a stratification of $\Bun_n$.
\item The vector bundle $\mathcal{E}_b$ is semistable if and only if $b$ basic. The
 semistable locus $\Bun_n^{\rm ss}$ is open in $\Bun_n$ and has the description
  \[
    \coprod\limits_{d\in \Z}[\ast/\underline{G_b(\Q_p)}]\cong
    \coprod\limits_{b\in B(G) \textrm{ basic }}
    \Bun_n^b=\Bun_n^{\mathrm{ss}},
  \]
  where $\kappa(b)=\mathrm{deg}(\mathcal{E}_b)=d$, $G_b$ is the
  $\sigma$-stabilizer of $b$ (an inner form of
  $G$) and
  \[
    \underline{G_b(E)}\cong \mathrm{Aut}(\mathcal{E}_b)
  \]
  the sheaf on $\Perf_k$ associated to the topological group
  $G_b(E)$. We denote by
  \[
    j\colon \Bun_n^{\rm ss} \hookrightarrow \Bun_n
  \]
  the open immersion of the semistable locus in $\Bun_n$.
\end{enumerate}

For each perfectoid space $S$, each $\mathcal{E}\in \Bun_m(S),\ m\geq 0$, gives rise to a small
$v$-sheaf\footnote{The notation $\mathrm{BC}$ stands for \textit{Banach-Colmez space}. We warn the reader that there are various closely related objects called Banach-Colmez spaces in the literature in this setting, and that we simply use here this notation for convenience.}
\[
  \mathrm{BC}(\mathcal{E})\colon \Perf_S\to (E-v.s.),\ T\mapsto
  H^0(X_{T,E},\mathcal{E}_T).
\]
Set
\[
  \Div^d:=(\mathrm{BC}(\mathcal{O}(d))\setminus
  \{0\})/\underline{E^\times}
\]
where $\mathcal{O}(d)\in \Bun_1(k)$ is the line bundle associated to
$b=\pi^{-d}\in \GL_1(\breve{E})$. The v-sheaf $\Div^d$ parametrizes ``relative Cartier effective divisors of
  $X_{E}$ of degree $d$''. As v-sheaves,
  \[  
    \Div^d=(\Div^1)^d/{S_d},
  \]
  where $S_d$ denotes the symmetric group on $d$ letters. When $d=1$, the correspondence between classical points on the Fargues-Fontaine curve and untilts provide an explicit description of $\mathrm{Div}^1$, cf.\ \cite[Definition II.1.19]{fargues2021geometrization}. Namely,
  \[
    \Div^1=\mathrm{Spd}(\breve{E})/\varphi_{\mathrm{Spd}(\breve{E})}^\Z.
  \]
  This descriptions shows in particular that the category of finite rank $L$-local systems on $\Div^1$ is equivalent to the category of finite degree continuous $L$-representations of the Weil group $W_E$ (here, continuity is understood with respect to the topology on $L$ defined as the colimit topology of the $\ell$-adic topology on its finitely generated $\Z_\ell$-submodules). For this reason, we will largely not differentiate in the notation between representations of $W_E$ and the associated sheaves on $\Div^1$.

\subsection{The category of lisse \texorpdfstring{$\ell$}-adic sheaves}
\label{sec:category-of-lisse-l-adic-sheaves}
Defining the correct category of sheaves on the geometric objects we just introduced is a delicate problem. For a small Artin $v$-stack $Y$\footnote{The hypothesis that $Y$ is an Artin v-stack is not needed to define $D_\bs(Y,L)$.}, Fargues-Scholze define full
subcategories
\[
  D_{\lis}(Y,L)\subseteq D_{\bs}(Y,L) \subseteq D(Y_v,L)
\]
and various operations on them. The first one will be the more relevant to us, but is constructed through the intermediate category $D_{\bs}(Y,L)$ of \textit{solid sheaves of $L$-modules on $Y$}. Here again, we refer the reader to \cite{fargues2021geometrization}, in particular \cite[Chapter VII]{fargues2021geometrization}, for a detailed discussion. 
Let us only recall the following key properties:
\begin{enumerate}
\item If $\ell^mL=0$ for some $m\geq 1$, then
  \[
    D_\lis(Y,L)=D_\et(Y,L),
  \]
  with the right hand side the category defined in \cite{scholze_etale_cohomology_of_diamonds}.
\item Excision holds on $\Bun_n$, i.e., $D_\lis(\Bun_n,L)$ admits
  a(n infinite) semi-orthogonal decomposition by the subcategories
  $D_{\lis}(\Bun_n^b,L), b\in B(G)$.
\item For $b\in B(G)$ there are equivalences
  \[
    \begin{matrix}
      D(\mathrm{Rep}_{L}^\infty G_b(E)) & \cong & D_\lis([\ast/\underline{G_b(E)}],L)&\cong & D_\lis(\Bun_n^b,L) \\
      \pi& &\mapsto & & \mathcal{F}_\pi
    \end{matrix}
  \]
  with $\mathrm{Rep}_{L}^\infty G_b(E)$ the category of
  \textit{smooth} $L$-representations of $G_b(E)$. Hence, the category $D_\lis(\Bun_n,L)$ provides a natural way to geometrize the category of smooth representations on $L$-modules of $G(E)$ and its inner forms.
\item Let $f: Y^\prime \to Y$ be a morphism of small Artin v-stacks. The usual pullback functor $f_v^\ast$ attached to the morphism $f_v\colon Y^\prime_v\to Y_v$ induce functors
\[
  f^\ast\colon D_\bs(Y,L)\to D_\bs(Y^\prime,L), ~ f^\ast\colon D_\lis(Y,L)\to D_\lis(Y^\prime,L).
\]
\item Let $f: Y^\prime \to Y$ be a morphism of small v-stacks. The functor $f^\ast\colon D_\bs(Y,L)\to D_\bs(Y^\prime,L)$ has a \textit{left} adjoint
$$
f_\natural \colon D_\bs(Y^\prime,L)\to D_\bs(Y,L).
$$
Moreover, if $f$ is $\ell$-cohomologically smooth and representable in locally spatial diamonds, then
  \[
    f_\nat(D_\lis(Y^\prime,L))\subseteq D_\lis(Y,L).
  \]
  In this case we set
  \[
    f_!(-):=f_{\nat}(-\otimes_L (f^{!}L)^\vee)\colon D_\bs(Y^\prime,L)\to D_\bs(Y,L),
  \]
  where $f^!L:=L\otimes_{\Z_\ell} \varprojlim\limits_{m}f^!\Z/\ell^m\in D_\bs(Y^\prime,L)$ denotes the dualizing complex. If $L$ is $\ell^m$-torsion for some $m\geq 1$ (and $f$ representable in locally spatial diamonds), then this recovers the functor $f_!$ constructed in \cite[Section 22]{scholze_etale_cohomology_of_diamonds}.
\item If $f\colon [\ast/\underline{G_b(E)}]\to \ast$, then $f_\nat$
  identifies with group \textit{homology} of smooth
  $L$-representations of $G_b(E)$.
\end{enumerate}

\subsection{Statement of Fargues' conjecture for $\GL_n$ }
\label{sec:statement-of-fargues-conjecture}
  After these preliminaries, we can now state Fargues' conjecture, in the special case relevant to us. 
  
\begin{conjecture}[Fargues, only for $\GL_n$ and in the irreducible case]
  \label{conjecture-fargues}
  For each irreducible continuous $\Qlbar$-representation $\mathbb{L}$ of $W_E$ of degree $n$, there exists an
  object $\mathrm{Aut}_{\mathbb{L}} \in D_\lis(\Bun_n,\Qlbar)$ such that
  \begin{enumerate}
  \item $\Aut_{\mathbb{L}}$ is a Hecke eigensheaf with eigenvalue
    $\mathbb{L}$ (see below for the exact meaning).
  \item $\Aut_{\mathbb{L}}$ is cuspidal; in particular
    $\Aut_{\mathbb{L}} \cong j_!(j^\ast \Aut_{\mathbb{L}})$.
  \item For $b\in B(G)$ basic,
    \[
      j^\ast_b \Aut_{\mathbb{L}} \cong \mathcal{F}_{\mathrm{LL}_b(\mathbb{L})},
    \]
    where $\mathrm{LL}_b(\mathbb{L})$ is the (generalized Jacquet-)Langlands correspondent of
    $\mathbb{L}$ (a smooth irreducible $\Qlbar$-representation of $G_b(E)$).
  \end{enumerate}
\end{conjecture}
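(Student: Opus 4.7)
The plan is to follow the strategy sketched in the introduction: construct a candidate $\mathrm{Aut}_{\mathbb{L}}$ via the spectral action of Fargues-Scholze, verify non-vanishing and the stalk on the trivial rank-$n$ bundle by comparison with an averaging functor, and then propagate the identification to every basic stratum by invoking local Langlands and the recent work of Hansen-Kaletha-Weinstein and Hansen. Since $\mathbb{L}$ is irreducible, $H^0(W_E,\mathrm{ad}\,\mathbb{L}) = \Qlbar$, and so $\mathbb{L}$ cuts out a smooth closed point $x \in X_{\GL_n,\Qlbar}$ with residual gerbe $[\ast/\Gm]$. Writing $k(x)_{\mathrm{reg}} \in \mathrm{IndPerf}(X_{\GL_n,\Qlbar})$ for the regular representation of this gerbe, set
\[
  \mathrm{Aut}_{\mathbb{L}} := k(x)_{\mathrm{reg}} \ast \mathcal{F}_W,
\]
where $\mathcal{F}_W$ is the Whittaker sheaf on $\Bun_n^1$ attached to the compactly induced Whittaker representation of $\GL_n(E)$. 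Since $\mathcal{O}_{X_{\GL_n,\Qlbar}}$ acts on $k(x)_{\mathrm{reg}}$ by evaluation at $x$, and the Hecke operators are by construction given by the spectral action of the tautological $W_E$-bundle evaluated at points of $\Div^1$, it is formal that $\mathrm{Aut}_{\mathbb{L}}$ is a Hecke eigensheaf with eigenvalue $\mathbb{L}$, settling $(1)$.

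The core of the argument is non-vanishing together with the computation of $j_1^\ast \mathrm{Aut}_{\mathbb{L}}$. The idea is that, because $\mathbb{L}$ is irreducible and $x$ is a smooth point of $X_{\GL_n,\Qlbar}$, the averaging functor $\mathrm{Av}_{\mathbb{L}^\vee}$ can be reinterpreted as the spectral action of an ind-perfect complex fitting into a two-term extension whose factors are simple skyscraper sheaves at $x$ and at a cyclotomic twist of $\mathbb{L}$; this is extracted from a structural analysis of $\mathbb{L}^\vee$ pulled back to the residual gerbe at $x$. On the other hand, $\mathrm{Av}_{\mathbb{L}^\vee}(\mathcal{F}_W)$ is computable directly from the definition as pull-push along the Hecke correspondence twisted by $\mathbb{L}^\vee$: on $\Bun_n^1$ this reduces to the $\ell$-adic cohomology of the Lubin-Tate tower, which by Harris-Taylor and Boyer realizes the local Langlands correspondence. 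Matching the two computations forces $j_1^\ast \mathrm{Aut}_{\mathbb{L}} \cong \mathcal{F}_{\mathrm{LL}(\mathbb{L})}$ and in particular $\mathrm{Aut}_{\mathbb{L}} \neq 0$, proving $(3)$ at $b = 1$.

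For basic $b \neq 1$, I would propagate the identification via Hecke operators at minuscule coweights, which link the basic strata: applying the eigensheaf property expresses $j_b^\ast \mathrm{Aut}_{\mathbb{L}}$ in terms of $j_1^\ast \mathrm{Aut}_{\mathbb{L}}$ and the cohomology of the relevant moduli of local shtukas. The theorem of Hansen-Kaletha-Weinstein on the Kottwitz conjecture then identifies the resulting virtual $G_b(E)\times W_E$-representation with $\mathrm{LL}_b(\mathbb{L})\otimes \mathbb{L}$ up to the predicted twist, and Hansen's results on supercuspidal Hecke eigensheaves upgrade this to an honest isomorphism of smooth $G_b(E)$-representations, giving $(3)$ for arbitrary basic $b$. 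For $(2)$, support on the semi-stable locus follows because the spectral action of a skyscraper at an irreducible $L$-parameter preserves the full subcategory of cuspidal sheaves on $\Bun_n$; that cuspidality (characterized geometrically via the constant-term functors) coincides with support on $\Bun_n^{\mathrm{ss}}$ is the expected generalization to arbitrary $n$ of the criterion established for $n=2$ in the first half of the paper.

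The main obstacle is the non-vanishing step of the second paragraph: matching the structural description of $\mathrm{Av}_{\mathbb{L}^\vee}$ as a two-term spectral action with the direct geometric computation of $\mathrm{Av}_{\mathbb{L}^\vee}(\mathcal{F}_W)$, and then isolating the contribution corresponding to $x$ to extract $\mathrm{LL}(\mathbb{L})$. This is precisely where the smoothness of $X_{\GL_n,\Qlbar}$ at $x$ (equivalently, the irreducibility of $\mathbb{L}$) and the Harris-Taylor-Boyer realization of local Langlands in the cohomology of the Lubin-Tate tower enter essentially; the rest of the argument, including cuspidality and the extension to non-trivial basic strata, is formal given the spectral action and the inputs from Hansen-Kaletha-Weinstein and Hansen.
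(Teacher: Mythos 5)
Your approach for parts (1) and (3) matches the paper's: define $\mathrm{Aut}_{\mathbb{L}} := k(\mathbb{L})_{\mathrm{reg}} \ast \mathcal{W}_\psi$ via the spectral action at the smooth point cut out by the irreducible $\mathbb{L}$; get the Hecke eigenproperty formally; compute the stalk at $\kappa(b)=1$ by writing $\mathcal{A}v_{\mathbb{L}^\vee}$ as the spectral action of a sum of two skyscrapers and matching against the Lubin--Tate cohomology, and then propagate to other basic strata via Hecke operators, Hansen--Kaletha--Weinstein's trace identity, and Hansen's concentration-in-degree-$0$ result. (A minor slip: $\mathcal{A}v_{\mathbb{L}^\vee}$ is a direct sum $k(\mathbb{L})_1[-1]\oplus k(\mathbb{L}(\chi_{\mathrm{cyc}}))_1[-2]$, not an extension; the argument for non-vanishing actually exploits this splitting and an induction replacing $\mathbb{L}$ by its cyclotomic twists.)

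However, the statement you are proving is labelled a conjecture precisely because condition (2) is \emph{not} established in the paper, and your argument for it has a genuine gap. You assert that cuspidality (vanishing of $\CT_{P,!}$ for all proper parabolics) ``coincides with support on $\Bun_n^{\mathrm{ss}}$.'' This is false: cuspidality implies support on the semistable locus (\Cref{sec:v_m-1-implies-cuspidal-implies-support-on-non-semistable-locus}), but not conversely; the paper's $n=2$ criterion identifies geometric cuspidality with supercuspidality of the restriction to each basic stratum, which is a strictly stronger condition than mere support. You also claim that the spectral action of a skyscraper at an irreducible parameter preserves the subcategory of cuspidal sheaves. To argue this one needs the commutation of constant term functors with averaging (equivalently with Hecke operators) on $D_\lis(\Bun_n,L)$, which in turn rests on excision in $D_\lis$; the paper explicitly states in \Cref{commutation-ct-averaging-d-lis} that this is \emph{not known} when $\mathrm{char}\,L=0$. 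For torsion coefficients the analogous cuspidality is deduced in \Cref{sec:aver-funct-via-2-hecke-eigensheaves-of-irred-autom-cuspidal} precisely via \Cref{sec:averaging-functors-2-averaging-functors-and-constant-terms}, which lives in $D_\et(-,\Lambda)$. Over $\Qlbar$ the paper therefore proves only the weaker assertion that $\mathrm{Aut}_{\mathbb{L}}$ is supported on $\Bun_n^{\mathrm{ss}}$ with supercuspidal stalks (\Cref{sec:gener-hecke-eigensh-1-hecke-eigensheaf-supported-on-semistable-locus}, via compatibility of Fargues--Scholze parameters with parabolic induction, not via vanishing constant terms), and leaves full cuspidality, and hence the statement $\mathrm{Aut}_{\mathbb{L}} \cong j_!(j^\ast \mathrm{Aut}_{\mathbb{L}})$, open. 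To turn this conjecture into a theorem you would need a new ingredient here.
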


\begin{remark}
\label{remark-formula-for-aut-L}
Note that
(2), (3) force
\begin{equation}\tag{$*$}
  \label{eq:1}
  \mathrm{Aut}_{\mathbb{L}} \cong \bigoplus\limits_{b\in B(G) \textrm{ basic}} j_{b,!}(\mathcal{F}_{\mathrm{LL}_b({\mathbb{L}})}), 
\end{equation}
hence $\mathrm{Aut}_{\mathbb{L}}$ is unique up to isomorphism. One could take this formula as a possible definition of $\mathrm{Aut}_{\mathbb{L}}$, but this would make the Hecke
eigensheaf property (1) hard to verify.
\end{remark}

Let us make more precise the meaning of the Hecke eigensheaf property. Fargues' conjecture is formulated for $L=\Qlbar$, but the Hecke property makes sense for any $\Z_l$-algebra $L$ as ring of coefficients, so let us formulate it in this generality. Recall that $\hat{G}=\GL_{n,L}$. By the geometric Satake equivalence, the category
\[
  \mathrm{Rep}(\hat{G})
\]
of finite dimensional (algebraic) representations of $\hat{G}$ acts on
$D_{\lis}(\Bun_n,L)$, cf. \cite[Proposition IX.2.1.]{fargues2021geometrization}. Even better, for any finite set $I$,
there exists a functor
\[
  T^I\colon \mathrm{Rep}(G)^{I}\times
  D_{\lis}(\Bun_n,L)\to D_{\bs}(\Bun_n\times
  (\Div^1)^I,L),\ (V,\mathcal{F})\mapsto T_{V}^I(\mathcal{F}),
\]
depending functorially on the finite set $I$.

For an $n$-dimensional $\ell$-adic representation $\mathbb{L}$ of $W_{E}$ and a(n algebraic)
representation $\hat{G}_L \to \GL(V)$, set $r_{V,\ast} (\mathbb{L})$ as the
composition
\[
  W_{E}\to \GL(\mathbb{L})\cong \hat{G}(L)\to \GL(V).
\]
Now, $\mathcal{F}\in D_{\lis}(\Bun_n,\Qlbar)$ is a Hecke eigensheaf
with eigenvalue $\mathbb{L}$ if for any finite set $I$ and any
collection $(V_i)_{i\in I}$ we are given an isomorphism
\[
  \eta_{(V_i)_{i\in I}}\colon T_{(V_i)_{i\in I}}^I(\mathcal{F})\cong
  \mathcal{F}\boxtimes (\boxtimes_{i\in I} r_{V_i,\ast}(\mathbb{L})),
\]
which is natural in $I$, $(V_i)_{i\in I}$, and compatible with composition/exterior tensor product.  E.g., if $I=\{1\}$ and
$V_{\mathrm{std}}$ the standard representation, then
\[
  \eta_{V_{\mathrm{st}}}\colon T_{V_{\mathrm{st}}}(\mathcal{F})\cong
  \mathcal{F}\boxtimes \mathbb{L}.
\]

The meaning of the cuspidality condition in condition (2) of the conjecture will be clarified below (see \Cref{definition-cuspidal-object}). 

\subsection{Fargues-Scholze's construction of the spectral action}
\label{sec:work-of-fargues-scholze}
In this short subsection, we recall one of the main results of \cite{fargues2021geometrization} (specialized to the case $G=\GL_n$), since we will use it later. 

As in the last section, we denote $\hat{G}=\GL_{n,L}$. For each finite set $I$ and each $V \in \mathrm{Rep}(\hat{G})^{I}$, the essential image of the Hecke operator
$$
T_V^I: D_\lis(\Bun_n,L) \to D_\bs(\Bun_n \times (\mathrm{Div}^1)^I, L)
$$
introduced above lies inside the essential image of $D_\bs(\Bun_n \times [\ast/W_E^I], L)$ (recall the map $\mathrm{Div}^1=[\mathrm{Spd}(C)/W_E] \to [\ast/W_E]$) and its pullback to $D_\bs(\Bun_n,L)$ lies in $D_\lis(\Bun_n,L)$. In other words, $T_V^I$ can be seen as a functor
$$
T_V^I: D_\lis(\Bun_n,L) \to D_\lis(\Bun_n,L)^{W_E^I}
$$
into the category of objects in $D_\lis(\Bun_n,L)$ endowed with a $W_E^I$-action. This follows by induction from the key invariance
$$
D_\lis(\Bun_n,L) \cong D_\lis(\Bun_n \times \mathrm{Spd}(C),L),
$$
which is itself a consequence of the semi-orthogonal decomposition of $D_\lis(\Bun_n,L)$ by the $D_\lis(\Bun_n^b,L)$, $b \in B(G)$ (point (1) of \Cref{sec:category-of-lisse-l-adic-sheaves}) and the analogous invariance statement for the classifying stacks $[\ast/G_b(E)]$ (via point (2) of \Cref{sec:category-of-lisse-l-adic-sheaves}). 
\\

The existence of such a functorial collection of operators $T_V^I$, which is already enough to provide the construction of semi-simple $L$-parameters attached to smooth irreducible representations of the groups $G_b(E)$, see \cite[Chapter VIII.4.]{fargues2021geometrization}, can be reformulated in a different, more geometric, manner.

 Let
\[
  X_{\hat{G}}
\]
be the algebraic stack over $L$ of $n$-dimensional continuous $L$-representations of
$W_E$, defined and studied in \cite[VIII.1]{fargues2021geometrization} or \cite{dat_helm_kurinczuk_moss}, \cite{zhu_coherent_sheaves_on_the_stack_of_langlands_parameters}. The algebraic stack $X_{\hat{G}}$ comes with a map
$$
f\colon  X_{\hat{G}} \to [\mathrm{Spec}(L)/\hat{G}].
$$
Note that $X_{\hat{G}}$ is an infinite disjoint union of finite type Artin stacks. Therefore, it could be seen as a stack locally of finite type, but we rather want to see it as an ind-finite type stack. In particular, coherent sheaves on it are required to have support on finitely many connected components, and thus we see for example the structure sheaf of $X_{\hat{G}}$ not as a vector bundle, but as an ind-vector bundle (cf. \cite[Remark 3.1.7]{zhu_coherent_sheaves_on_the_stack_of_langlands_parameters}). 

The following is \cite[Theorem X.0.1]{fargues2021geometrization}.

\begin{theorem}
\label{main-theorem-fargues-scholze-existence-of-the-spectral-action}
The geometric Hecke action gives rise to an $L$-linear monoidal action of the category
$$
\mathrm{Perf}(X_{\hat{G}})
$$
of perfect complexes on $X_{\hat{G}}$ on the full sucategory $D_\lis(\Bun_n,L)^{\omega}$ of $D_\lis(\Bun_n,L)$ formed by compact objects, such that for any finite set $I$, and any $V=(V_i)_{i\in I} \in \mathrm{Rep}(\hat{G})^I$, the functor $T_V^I$ is given by the action of the perfect complex
$$
\bigotimes_{i\in I} f^\ast V_i,
$$
where for each $i\in I$, we denoted again by $V_i$ the vector bundle on $[\mathrm{Spec}(L)/\hat{G}]$ attached to $V_i$. 
\end{theorem}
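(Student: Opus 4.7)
The plan is to follow Fargues--Scholze's bootstrap: upgrade the Weil-group-equivariant Hecke operators $T_V^I$, recalled in the paragraph preceding the statement, into an $L$-linear monoidal action of $\Perf(X_{\hat G})$ via a Tannakian-type description of $X_{\hat G}$. The guiding principle is that the data of $V\in\Rep(\hat G)^I$ together with the $W_E^I$-action on $T_V^I$ is exactly the data of a monoidal functor out of the symmetric monoidal $\infty$-category of $\hat G$-representations equipped with a continuous $W_E$-action, and that the compact part of the latter is none other than $\Perf(X_{\hat G})$.

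The first step is to assemble the Hecke functors into a single symmetric monoidal input. For varying $I$ and $V=(V_i)_{i\in I}$, the operator $T_V^I$ equipped with its $W_E^I$-action is functorial in both $I$ and $V$, and the fusion property built into the geometric Satake equivalence identifies, along any partial diagonal $\hat G^J \hookrightarrow \hat G^I$ (and the corresponding embedding $(\Div^1)^J \hookrightarrow (\Div^1)^I$), the operator $T_V^I$ with $T_{\mathrm{Res}(V)}^J$. Combined with Weil group equivariance, these coherences package into a single symmetric monoidal $L$-linear functor
\[
  \Phi : \Rep\bigl(\hat G \times W_E\bigr)^{\mathrm{cont}} \longrightarrow \mathrm{End}_L\bigl(D_\lis(\Bun_n,L)^\omega\bigr),
\]
where the source denotes $\hat G$-representations equipped with a commuting continuous action of $W_E$.

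The second step is to identify the compact part of this source with $\Perf(X_{\hat G})$, and to verify that the pullback $f^\ast V$ of $V\in\Rep(\hat G)$---endowed with its tautological $W_E$-action coming from the structure morphism $X_{\hat G}\to[\ast/\hat G]$---corresponds exactly to the input producing $T_V^I$. This identification is a form of Tannakian reconstruction: since $X_{\hat G}$ classifies continuous homomorphisms $W_E \to \hat G$ up to $\hat G$-conjugacy, a perfect sheaf on it is the same as a perfect object of $\Rep(\hat G)$ with continuous $W_E$-action. The compatibility $\Phi(V) = \bigl(\bigotimes_i f^\ast V_i\bigr) \ast (-)$ then follows directly from the definition of $f^\ast V$.

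The main obstacle will be to control \emph{continuity}, i.e., to show that $\Phi$ really factors through $\Perf(X_{\hat G})$ rather than through some formal thickening of it. Concretely, combining matrix coefficients $L \to V\otimes V^\vee \to L$ with Weil group elements $w_1,\dots,w_k\in W_E$ yields, via $\Phi$, excursion-type endomorphisms of the identity functor of $D_\lis(\Bun_n,L)^\omega$, which one wants to interpret as global sections of the structure sheaf of $X_{\hat G}$. One must check that these endomorphisms come from \emph{regular} functions on $X_{\hat G}$ and not merely from functions on some pro-thickening, and then invoke a generation statement for $\Perf(X_{\hat G})$, such as the one in \cite{zhu_coherent_sheaves_on_the_stack_of_langlands_parameters}, asserting that it is generated under finite limits, retracts, and the $\otimes$-action of $\Perf([\ast/\hat G])$ by such excursion data. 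This continuity/finiteness analysis, which rests on the moduli description of $X_{\hat G}$ and the finiteness properties of $D_\lis(\Bun_n,L)^\omega$, is where the bulk of the work lies.
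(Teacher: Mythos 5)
The paper does not prove this statement: it is quoted verbatim from Fargues--Scholze as \cite[Theorem X.0.1]{fargues2021geometrization}, and the paper's role is only to set notation and record consequences (passage to ind-completion, the notation $A\ast\mathcal{F}$, $W_E$-equivariance of the isomorphism $f^\ast V \ast \mathcal{F}\cong T_V(\mathcal{F})$). There is therefore no in-paper proof to compare your attempt against; what you have written is a sketch of the strategy of the \emph{cited} proof.

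As an outline of that strategy, the first step (packaging the $T_V^I$ with their $W_E^I$-actions and fusion coherences into a symmetric monoidal functor) and the emphasis on the continuity/finiteness problem are accurate reflections of what Fargues--Scholze do. However, your second step is misstated in a way that would not survive close inspection. The claim that ``a perfect sheaf on $X_{\hat G}$ is the same as a perfect object of $\Rep(\hat G)$ with continuous $W_E$-action'' is false: $X_{\hat G}$ is a quotient $[Z/\hat G]$ where $Z$ is an honest (ind-)scheme of $1$-cocycles $W_E\to\hat G$, so $\Perf(X_{\hat G})$ is the category of $\hat G$-equivariant perfect complexes on $Z$, \emph{not} a category of $\hat G$-representations carrying an auxiliary $W_E$-action. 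If the latter were correct the continuity problem you flag would essentially disappear, which should already suggest something is off. The actual argument in \cite[Chapter X]{fargues2021geometrization} has to work harder: one shows that the image of $\Rep(\hat G)$ under $f^\ast$, together with the $\mathcal{O}(X_{\hat G})$-linear structure coming from the excursion-operator description of $\mathcal{O}(X_{\hat G})$, generates $\Perf(X_{\hat G})$ in a suitable sense, and the extension of the $T_V^I$ to all of $\Perf(X_{\hat G})$ is then constructed from this generation statement rather than read off from a Tannakian dictionary. So the continuity analysis is not an add-on to a clean Tannakian reconstruction; it \emph{is} the substitute for one. Beyond that, your sketch leaves the main technical lemmas implicit, but as a high-level road map it is pointed in the right direction.
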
 

Passing to ind-completions (recall that $D_\lis(\Bun_n,L)$ is compactly generated, \cite[Proposition VII.7.4]{fargues2021geometrization}), one gets an $L$-linear monoidal action of $\mathrm{IndPerf}(X_{\hat{G}})$ on $D_\lis(\Bun_n,L)$. 

We will use the following notation. If $A \in \mathrm{IndPerf}(X_{\hat{G}})$, and $\mathcal{F} \in D_\lis(\Bun_n,L)$, we will denote by
$$
A \ast \mathcal{F}
$$
the object in $D_\lis(\Bun_n,L)$ obtained by letting $A$ act on $\mathcal{F}$.

We note that if $V\in \mathrm{Rep}(\hat{G})$, then $f^\ast V\in \Perf(X_{\hat{G}})$ has a canonical $W_E$-action, and the isomorphism
  \[
    f^\ast V\ast \mathcal{F}\cong T_V(\mathcal{F})
  \]
is $W_E$-equivariant for each $\mathcal{F}\in D_\lis(\Bun_G,L)$.

\section{Constant term and Eisenstein functors}
\label{sec:const-term-eisenst}

We use the notations introduced in \Cref{sec:notation}.
Let $P \subset G$ be a standard parabolic subgroup, with Levi
quotient $M$. The embedding of $P$ in $G$ and the projection
$P \to M$ induce morphisms \footnote{We note that there is a conflict of notation, $p$ denotes the residue characteristic of $E$ and the morphism $\Bun_P\to \Bun_n$. We think that this will not create confusion.
}
\[
  \Bun_n \overset{p} \longleftarrow \Bun_P \overset{q}
  \longrightarrow \Bun_M.
\]
We record the following properties.

\begin{lemma}
  \label{sec:const-term-eisenst-1-properties-of-p-q}
  \begin{enumerate}
  \item The morphism $p\colon \Bun_P\to \Bun_n$ is representable in locally spatial diamonds, compactifiable and locally on $\Bun_P$ of $\mathrm{dimtrg}(p)<\infty$.
    \item The morphism $q\colon \Bun_P\to \Bun_M$ is a cohomologically smooth morphism of smooth Artin $v$-stacks in the sense of \cite[Definition IV.1.11.]{fargues2021geometrization}.  
  \end{enumerate}  
\end{lemma}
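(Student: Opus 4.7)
The plan is to use the explicit description of $\Bun_P$ for $G=\GL_n$: a $P$-bundle on the Fargues--Fontaine curve is the same as a rank $n$ vector bundle equipped with a filtration whose graded ranks are prescribed by $P$, and an $M$-bundle is the associated graded. Under this identification, $p$ forgets the filtration and remembers the total bundle, while $q$ remembers only the associated graded. Both assertions then reduce to statements about such filtrations and extensions that can be handled by the formalism of \cite{fargues2021geometrization}.

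For part (1), I would identify the fiber of $p$ over $\mathcal{E}\in\Bun_n(S)$ with the sheaf of sections over $X_S$ of the relative partial flag bundle $\mathrm{Fl}_P(\mathcal{E})\to X_S$. Since $G/P$ is a smooth projective $E$-variety, $\mathrm{Fl}_P(\mathcal{E})$ is a smooth projective scheme over the relative Fargues--Fontaine curve, and its sections form a sheaf representable in locally spatial diamonds by the Weil-restriction-style constructions used for Banach--Colmez spaces. Compactifiability is inherited from the properness of $G/P$: the sections stack admits a compactification by a relative Quot-type space allowing the filtration to degenerate into a chain of subsheaves. The local bound on $\mathrm{dimtrg}(p)$ follows by stratifying $\Bun_P$ by the Harder--Narasimhan types of the graded pieces; on each such stratum the relative dimension is computed from the ranks, degrees, and $H^0$, $H^1$ of the relevant $\mathcal{H}om$-bundles and is plainly finite.

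For part (2), I would filter $q$ using the lower central series of the unipotent radical $N$ of $P$: setting $N^{(0)}=N$, $N^{(i+1)}=[N,N^{(i)}]$, each subquotient $N^{(i)}/N^{(i+1)}$ is abelian and, for $\GL_n$, decomposes as an $M$-representation into a direct sum of $\mathcal{H}om$-bundles between two graded pieces. This yields a finite tower
\[
\Bun_P \longrightarrow \cdots \longrightarrow \Bun_{P/N^{(1)}} \longrightarrow \Bun_M,
\]
each step of which is, over the previous stage, a twisted classifying stack of a Banach--Colmez space of the form $[\ast/\BC(\mathcal{H}om(\mathcal{E}_j,\mathcal{E}_i))]$, $i<j$. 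Cohomological smoothness of such classifying stacks, attached to vector bundles of arbitrary slope, is established in \cite{fargues2021geometrization}; combined with closure of $\ell$-cohomologically smooth morphisms under composition this gives cohomological smoothness of $q$. That $\Bun_P$ and $\Bun_M$ are themselves smooth Artin $v$-stacks follows from the corresponding analysis in loc.\ cit.\ applied to each Levi factor and then iteratively to $P$ via the same tower.

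The main obstacle is the cohomological smoothness step in part (2) in those strata where the $\mathcal{H}om$-bundle in question has negative slopes: there the ``Banach--Colmez space'' is really a classifying stack for a nontrivial $H^1$-contribution, and one must appeal to the precise $\ell$-cohomological smoothness statements of Fargues--Scholze (which allow negative relative dimension) rather than any naive finite-dimensional argument. Once this is in hand, the rest of the proof amounts to bookkeeping: assembling the tower, matching ranks and degrees on each HN stratum in part (1), and invoking stability of the relevant properties under composition and base change.
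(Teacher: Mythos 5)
Your overall strategy matches the paper's. For part (1), identifying the fiber of $p$ over $\mathcal{E}$ with sections of the relative flag variety $\tilde{\mathcal{E}}/P \to X_S$ is exactly the paper's route; the paper then directly invokes \cite[Theorem IV.4.2]{fargues2021geometrization} (moduli of sections of a proper smooth scheme over the curve) to get representability in locally spatial diamonds and compactifiability, and argues $\mathrm{dimtrg}(p) < \infty$ by embedding fibers into fibrations of Banach--Colmez spaces. Your ``Quot-type compactification'' and stratification remarks are in the same spirit but a bit more hand-wavy than the clean black-box theorem.

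For part (2), you filter $q$ via the lower central series of the unipotent radical, whereas the paper filters by building up the flag one graded piece at a time: $T_0 = S$, $T_i = \mathcal{E}xt^1(\mathcal{F}_{i}, \mathcal{E}_{i+1})$ with $\mathcal{F}_i$ the universal partial extension. Both are legitimate reductions to Banach--Colmez data, but there is an imprecision in your description of the tower steps: the fiber of each stage is \emph{not} a classifying stack $[\ast/\mathrm{BC}(\mathcal{H}om(\mathcal{E}_j,\mathcal{E}_i))]$ in general. It is the Ext-stack, i.e.\ the stacky quotient $[\mathrm{BC}_{H^1}/\mathrm{BC}_{H^0}]$ of the $H^1$-Banach--Colmez space by the $H^0$-one; the classifying-stack description only holds when all the slopes are nonnegative (so $H^1$ vanishes). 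You correctly identify the negative-slope case as the crux, but the fix you gesture at (``allow negative relative dimension'') is not the right one: both $H^0$ and $H^1$ of a mixed-slope bundle fail to be smooth individually. The resolution, which is the paper's \Cref{sec:const-term-eisenst-2-ext-spaces-of-vector-bundles}, is to choose $d \gg 0$ and a surjection $\mathcal{O}(-d)^N \twoheadrightarrow \mathcal{E}_2^\vee$, dualize to get $0 \to \mathcal{E}_2 \to \mathcal{O}(d)^N \to \mathcal{F} \to 0$, and present the Ext-stack as the stacky quotient $[X_2/X_1]$ of the \emph{positive} Banach--Colmez space $X_2 = \mathcal{H}om(\mathcal{E}_1,\mathcal{F})$ by the positive Banach--Colmez space $X_1 = \mathcal{H}om(\mathcal{E}_1,\mathcal{O}(d)^N)$. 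Only then can one cite cohomological smoothness of positive BC spaces and close under stacky quotient. Without this step your argument for cohomological smoothness of $q$ does not go through.
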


In particular, a natural functor $Rq_!\colon D_\et(\Bun_P,\Lambda)\to D_\et(\Bun_M,\Lambda)$ is defined in \cite[Remark IV.1.15.]{fargues2021geometrization}, although $q$ is not representable (in locally spatial diamonds).

\begin{proof}
  Let $S\in \Perf_{k}$ be a perfectoid space and let $\mathcal{E}\in \Bun_n(S)$ be a vector bundle of rank $n$ on $X_S$.
  The fiber product
  \[
    \Bun_P\times_{\Bun_n}S
  \]
  parametrizes reductions of the associated $G$-torsor $\tilde{\mathcal{E}}$ of $\mathcal{E}$ to $P$ and can therefore be written as the space
  \[
    \mathcal{M}_Z
  \]
  associated with
  \[
    Z:=\tilde{\mathcal{E}}/P
  \]
  living over $X_S$, cf.\ \cite[Section IV.4.]{fargues2021geometrization}. By \cite[Theorem IV.4.2.]{fargues2021geometrization} the morphism
\[
  p\colon \Bun_P\to \Bun_M
\]
is therefore representable in locally spatial diamonds and compactifiable. If $U\subseteq \Bun_P, V\subseteq \Bun_n$ are quasi-compact with $p(U)\subseteq V$, then it is easy to see that the restriction
\[
  f:=p_{|U}\colon U\to V 
\]
has finite geometric transcendence degree, because its fibers embed into fibrations of Banach-Colmez spaces.

We turn to a proof of the claim about $q$. For this let $S\in \Perf_{k}$ be a perfectoid space and $\mathcal{F}\in \Bun_M(S)$. 
The fiber product
\[
  \Bun_P\times_{\Bun_M} S
\]
is an iterated fibration of $v$-sheaves
\[
  \mathcal{E}xt^1_{X_T}(\mathcal{E}_1,\mathcal{E}_2)
\]
for small $v$-stacks $T$ cohomologically smooth over $S$. More precisely, if $M\cong \GL_{r_1}\times \ldots, \GL_{r_m}$ with $\sum\limits_{i=1}^{m} r_i=n$, then $\mathcal{F}\in \Bun_M(S)$ corresponds to vector bundles
\[
  \mathcal{E}_1,\ldots, \mathcal{E}_m
\]
of ranks $r_1,\ldots, r_m$.
Set $T_0:=S$ and
\[
  T_i:=\mathcal{E}xt^1_{X_{T_{i-1}}}(\mathcal{F}_{i,T_{i-1}},\mathcal{E}_{i+1,T_{i-1}})
\]
for $i=1,\ldots, m-1$, where $\mathcal{F}_{1,T_0}=\mathcal{E}_1$ and $\mathcal{F}_{i,T_{i-1}}$ denotes the universal extension of $\mathcal{E}_{i,T_{i-2}}$ by $\mathcal{F}_{i-1,T_{i-2}}$ on $T_{i-1}$ for $2\leq i\leq m-1$, the $\mathcal{E}xt^1$ the fibered category {over the $v$-site of $X_{T_{i-1}}$} with fiber {over some perfectoid space $S\in X_{T_{i-1},v}$} the \textit{groupoid} of extensions of vector bundles {on the Fargues-Fontaine curve $X_S$}.
Then
\[
  T_m\cong \Bun_P\times_{\Bun_M}S,
\]
and thus the claim follows from \Cref{sec:const-term-eisenst-2-ext-spaces-of-vector-bundles} below. This finishes the proof.
\end{proof}

\begin{lemma}
  \label{sec:const-term-eisenst-2-ext-spaces-of-vector-bundles}
  Let $S\in \Perf_{k}$ and let $\mathcal{E}_1,\mathcal{E}_2$ be vector bundles on $X_S$. Then the fibered category $Y:=\mathcal{E}xt^1_{X_S}(\mathcal{E}_1,\mathcal{E}_2)$ sending $T\in \Perf_S$ to the groupoid of extensions
  \[
    0\to \mathcal{E}_2\to \mathcal{E}\to \mathcal{E}_1\to 0
  \]
  is represented by an Artin $v$-stack, which is cohomologically smooth over $S$.
\end{lemma}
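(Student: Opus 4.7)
The first step is to reinterpret $Y$ in terms of Banach--Colmez spaces. Tensoring with $\mathcal{E}_1^\vee$ identifies an extension of $\mathcal{E}_1$ by $\mathcal{E}_2$ with an extension of $\mathcal{O}_{X_S}$ by $\mathcal{F} := \mathcal{E}_1^\vee \otimes \mathcal{E}_2$, so we may assume $\mathcal{E}_1=\mathcal{O}_{X_S}$. For each $T \in \Perf_S$, isomorphism classes of extensions of $\mathcal{O}_{X_T}$ by $\mathcal{F}_T$ form $\mathrm{Ext}^1_{X_T}(\mathcal{O},\mathcal{F}_T) = H^1(X_T,\mathcal{F}_T)$, while automorphisms of a given extension (as an extension) act trivially on its class and are canonically parametrized by $\mathrm{Hom}_{X_T}(\mathcal{O},\mathcal{F}_T) = H^0(X_T,\mathcal{F}_T)$. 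In other words, $Y$ is the Picard v-stack associated with the $2$-term complex $\tau^{\leq 0}(R\pi_{S,\ast}\mathcal{F}[1])$, where $\pi_S\colon X_S \to S$, and it fits into a gerbe
\[
   B\,\mathrm{BC}(\mathcal{F}) \longrightarrow Y \longrightarrow \mathrm{BC}(\mathcal{F}[1])
\]
banded by the sheaf of abelian groups $\mathrm{BC}(\mathcal{F})\colon T \mapsto H^0(X_T,\mathcal{F}_T)$ over the v-sheaf of abelian groups $\mathrm{BC}(\mathcal{F}[1])\colon T \mapsto H^1(X_T,\mathcal{F}_T)$ (the trivial extension provides a global section of the underlying gerbe).

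The second step is to reduce to the case where $\mathcal{F}$ is semistable of a single slope. Both conditions we need to verify (being an Artin v-stack, being cohomologically smooth over $S$) are local on $S$ in the v-topology and are stable under extensions of group v-sheaves in the sense that, given a short exact sequence $0 \to \mathcal{F}' \to \mathcal{F} \to \mathcal{F}'' \to 0$ of vector bundles on $X_S$, the resulting long exact sequence in $R\pi_{S,\ast}$ exhibits $Y_\mathcal{F}$ as an extension of Picard v-stacks of $Y_{\mathcal{F}''}$ by $Y_{\mathcal{F}'}$. Stratifying $S$ by the (locally constant) Harder--Narasimhan polygon of $\mathcal{F}$, we may assume that the HN filtration of $\mathcal{F}$ is defined globally, and an induction on the number of HN steps reduces us to $\mathcal{F}$ semistable.

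The third step handles the semistable case by direct appeal to the structure theory of Banach--Colmez spaces recalled from \cite[Chapter II]{fargues2021geometrization}. If $\mathcal{F}$ is semistable of slope $\lambda \geq 0$, then $H^1(X_T,\mathcal{F}_T)=0$ for all $T$, so $\mathrm{BC}(\mathcal{F}[1])=\ast_S$, and $\mathrm{BC}(\mathcal{F})$ is a positive Banach--Colmez space, i.e.\ a locally spatial diamond cohomologically smooth of finite $\ell$-dimension over $S$; hence $Y \cong B_S \mathrm{BC}(\mathcal{F})$ is an Artin v-stack cohomologically smooth over $S$. Symmetrically, if $\lambda < 0$, then $\mathrm{BC}(\mathcal{F})=0$ and $\mathrm{BC}(\mathcal{F}[1])$ is a negative Banach--Colmez space, a locally spatial diamond cohomologically smooth over $S$; hence $Y \cong \mathrm{BC}(\mathcal{F}[1])$ has the required properties a fortiori.

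The main obstacle is bookkeeping in the dévissage: one needs to know that Artin v-stacks and cohomological smoothness over $S$ are preserved by fibrations in Picard v-stacks of cohomologically smooth group v-sheaves, and that the Picard-stack triangle attached to a short exact sequence of bundles really does exhibit $Y_\mathcal{F}$ as an $Y_{\mathcal{F}'}$-torsor over $Y_{\mathcal{F}''}$. Both points follow from formal properties of $R\pi_{S,\ast}$ together with \cite[IV.1]{fargues2021geometrization}, but they are the technically most delicate ingredient of the argument; once granted, the semistable reduction above finishes the proof.
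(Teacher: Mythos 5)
The key gap is in your second step, the d\'evissage to the semistable case. You write that the Harder--Narasimhan polygon of $\mathcal{F}$ is ``locally constant'' and that stratifying $S$ accordingly lets you assume the HN filtration is defined globally. Neither claim holds: the HN polygon is only semicontinuous and can jump on a closed subset of $S$, so the stratification is into locally closed pieces; and, more seriously, cohomological smoothness of $Y\to S$ cannot be checked on a locally closed stratification of $S$. A stratification $S=Z\sqcup U$ with $Z$ closed and $U$ open is not a $v$-cover of $S$, and it is simply false that smoothness of $Y\times_S Z\to Z$ and $Y\times_S U\to U$ implies smoothness of $Y\to S$ (already in algebraic geometry, $\{xy=0\}\to\mathbb{A}^1_x$ has smooth fibers over every stratum but is not smooth). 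The paper makes exactly this danger explicit in the remark following the lemma: if $\mathcal{F}=\mathcal{O}\oplus\mathcal{O}$ degenerates to $\mathcal{O}(1)\oplus\mathcal{O}(-1)$ at a point $s\in S$, then on $S\setminus\{s\}$ the sheaf $\mathrm{BC}(\mathcal{F}[1])$ is zero (smooth) and at $s$ it is a negative Banach--Colmez space (smooth), yet over $S$ the map $\mathrm{BC}(\mathcal{F}[1])\to S$ is not open, hence not cohomologically smooth. Your stratum-by-stratum argument would ``prove'' smoothness in this very example, so it cannot be correct.

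What the paper does instead is avoid the HN filtration of $\mathcal{F}$ entirely: using quasicompactness of $S$ it chooses a single surjection $\mathcal{O}_{X_S}(-d)^N\twoheadrightarrow\mathcal{E}_2^\vee$ with $d\gg 0$, dualizes to a resolution $0\to\mathcal{E}_2\to\mathcal{O}_{X_S}(d)^N\to\mathcal{F}'\to 0$, and enlarges $d$ so that both $\mathcal{E}_1^\vee\otimes\mathcal{O}(d)^N$ and $\mathcal{E}_1^\vee\otimes\mathcal{F}'$ have strictly positive HN slopes fiberwise over \emph{all} of $S$ simultaneously. Then the relevant spaces $X_1=\mathcal{H}om(\mathcal{E}_1,\mathcal{O}(d)^N)$ and $X_2=\mathcal{H}om(\mathcal{E}_1,\mathcal{F}')$ are positive Banach--Colmez spaces that are cohomologically smooth over $S$ globally (no stratification), and $Y\cong[X_2/X_1]$. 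The gerbe picture in your first step and the claim that Picard-stack fibrations preserve smoothness are both sound; the error is only that you try to run the d\'evissage along the (non-globally-defined) HN filtration of $\mathcal{F}$ itself, where the paper substitutes a global resolution by a bundle of controlled slope. If you replace your step 2 by this global resolution, the rest of your argument goes through.
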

In contrast, the coarse moduli space of $Y$, i.e., the small $v$-sheaf
\[
  T\mapsto H^1(X_T,\mathcal{E}_1^\vee\otimes_{\mathcal{O}_{X_T}} \mathcal{E}_2)
\]
need not be smooth over $S$. E.g., if $s\in S$ is a closed point and $\mathcal{O}\oplus \mathcal{O}$ on $S\setminus\{s\}$ degenerates to $\mathcal{O}(1)\oplus \mathcal{O}(-1)$ in $s$, then the morphism of the coarse moduli space to $S$ is not open, and thus not cohomologically smooth. 
\begin{proof}
  As $S\mapsto \Bun(X_S)$ is a $v$-stack on $\Perf_{k}$ the fibered category $Y$ is a $v$-stack, too.
  We may assume that $S$ is affinoid, and in particular quasi-compact.
  Consider a surjection
  \[
    \mathcal{O}_{X_S}(-d)^N\to \mathcal{E}_2^\vee
  \]
  with $d\gg 0$ (and kernel automatically a vector bundle), and the dual resolution
  \[
    0\to \mathcal{E}_2\to \mathcal{O}_{X_S}(d)^{N}\to \mathcal{F}\to 0 
  \]
  with $\mathcal{F}$ some vector bundle.
  By enlarging $d$ we may (using again quasi-compacity of $S$) assume that
  \[
    \mathcal{E}_1^\vee \otimes_{\mathcal{O}_{X_S}} \mathcal{O}_{X_S}(d)^N,\quad \mathcal{E}_1^\vee\otimes_{\mathcal{O}_{X_S}} \mathcal{F} 
  \]
  have positive Harder-Narasimhan slopes over all of $S$. Indeed, this is clear for $\mathcal{E}_1^\vee\otimes_{\mathcal{O}_{X_S}}\mathcal{O}_{X_S}(d)^N$ and then follows for $\mathcal{E}_1^\vee\otimes_{\mathcal{O}_{X_S}} \mathcal{F}$ because fiberwise on $S$ the $H^1$ of this sheaf vanishes as it is surjected on by the vanishing $H^1$ of $\mathcal{E}_1^\vee\otimes_{\mathcal{O}_{X_S}}\mathcal{O}_{X_S}(d)^N$.
 
  In particular, the small $v$-sheaves
  \[
    X_1:=\mathcal{H}om_{X_S}(\mathcal{E}_1,\mathcal{O}_{X_S}(d)^N),\quad X_2:=\mathcal{H}om_{X_S}(\mathcal{E}_1,\mathcal{F})
  \]
  are positive Banach-Colmez spaces and therefore cohomologically smooth by \cite[Proposition II.3.5.(iii)]{fargues2021geometrization}.
  Now, $Y$ is the stacky quotient of $X_2$ by the action of $X_1$ on it coming from the natural morphism
  \[
   X_1\to X_2. 
  \]
  This finishes the proof.
\end{proof}

We define the ($!$-version of the) constant term functor as
\[
  \CT_{P,!} \colon D_\et(\Bun_n,\Lambda)\to D_\et(\Bun_M,\Lambda)
\]
by the formula
\[
  \CT_{P,!}(\mathcal{G}) := Rq_! p^* \mathcal{G}
\]
for
\[
  \mathcal{G}\in D_\et(\Bun_n,\Lambda).
\]\
When $P$ is the parabolic attached to a partition $n=n_1+n_2$ of $n$,
we sometimes write $\mathrm{CT}_{(n_1,n_2),!}$ instead of
$\mathrm{CT}_{P,!}$.  The functor $\CT_{P,!}$ is left adjoint to
the functor
\[
  \Eis_{P,\ast}:=Rp_\ast\circ Rq^!\colon D_\et(\Bun_M,\Lambda)\to D_\et(\Bun_n,\Lambda)
\]
of ``geometric Eisenstein series''.

The main use of the constant term functors is to define the important category of cuspidal objects.

\begin{definition}
  \label{definition-cuspidal-object}
  We say that an object $\mathcal{G}\in D_\et(\Bun_n,\Lambda)$ on $\Bun_n$
  is \textit{cuspidal} if for any choice of proper standard parabolic
  $P$ with Levi quotient $M$,
  \[
    \CT_{P,!}(\mathcal{G}) =0.
  \]
  We denote by
  \[
  D_{\et,\mathrm{cusp}}(\Bun_n,\Lambda)\subseteq D_\et(\Bun_n,\Lambda)
  \]
  the full triangulated subcategory of cuspidal objects.
\end{definition}

It is equivalent to demand the same condition for all proper, \textit{maximal} standard parabolics $P\subseteq G$, i.e., those associated with a partition $n=n_1+n_2$.

  \subsection{Constant terms and twisting}
\label{sec:const-terms-twist}

Let $P\subseteq G$ be a parabolic with Levi quotient $M$.
As the kernel of
\[
  \mathrm{det}\colon G\to \Gm=\GL_1
\]
contains the unipotent radical of $P$ we get a canonical factorization
\[
  \mathrm{det}_M\colon M\to \Gm,
\]
whose restriction to $P$ agrees with the restriction $\mathrm{det}_P\colon P\to \Gm$ of $\mathrm{det}$. From this we can deduce the existence of a commutative diagram
\begin{equation}
  \label{eq:2-diagram-with-determinants}
  \xymatrix{
    &\Bun_P\ar[ld]^{p}\ar[rd]^q\ar[dd]^{\mathrm{det}_P} & \\
    \Bun_n\ar[rd]^{\mathrm{det}} & & \Bun_M\ar[ld]^{\mathrm{det}_M} \\
    & \Bun_1 &
  }
\end{equation}

We obtain the following compatibility of constant term functors with ``twisting by characters''.

\begin{lemma}
  \label{sec:const-terms-twist-1-constant-terms-and-twisting}
  Let $\mathcal{G}\in D_\et(\Bun_1,\Lambda)$. Then for any $\mathcal{F}\in D_\et(\Bun_n,\Lambda)$ there is a natural isomorphism
  \[
    \CT_{P,!}(\mathcal{F}\otimes_\Lambda \mathrm{det}^\ast\mathcal{G})\cong \CT_{P,!}(\mathcal{F})\otimes_\Lambda \mathrm{det}^\ast_M \mathcal{G}.
  \]
  In particular, $D_{\et,\mathrm{cusp}}(\Bun_n,\Lambda)$ is stable under the functor $-\otimes_{\Lambda} \mathrm{det}^\ast \mathcal{G}$.
\end{lemma}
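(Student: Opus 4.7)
The plan is to decompose the statement into two standard pieces: a pullback identity coming from the commutative diagram \eqref{eq:2-diagram-with-determinants}, followed by the projection formula for $Rq_!$.

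First, from the lower triangle of \eqref{eq:2-diagram-with-determinants} we have the equality of morphisms $\mathrm{det}\circ p = \mathrm{det}_P = \mathrm{det}_M\circ q\colon \Bun_P\to \Bun_1$. Passing to pullbacks gives a canonical isomorphism of functors
\[
  p^\ast \circ \mathrm{det}^\ast \;\cong\; q^\ast \circ \mathrm{det}_M^\ast \colon D_\et(\Bun_1,\Lambda)\to D_\et(\Bun_P,\Lambda).
\]
Since $p^\ast$ is symmetric monoidal, for any $\mathcal{F}\in D_\et(\Bun_n,\Lambda)$ and $\mathcal{G}\in D_\et(\Bun_1,\Lambda)$ we obtain
\[
  p^\ast(\mathcal{F}\otimes_\Lambda \mathrm{det}^\ast \mathcal{G})
    \;\cong\; p^\ast \mathcal{F} \otimes_\Lambda p^\ast \mathrm{det}^\ast \mathcal{G}
    \;\cong\; p^\ast \mathcal{F} \otimes_\Lambda q^\ast \mathrm{det}_M^\ast \mathcal{G}.
\]

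Next I apply $Rq_!$ to both sides. By \Cref{sec:const-term-eisenst-1-properties-of-p-q} the morphism $q\colon \Bun_P\to \Bun_M$ is cohomologically smooth between smooth Artin $v$-stacks, so the functor $Rq_!$ from \cite[Remark IV.1.15.]{fargues2021geometrization} is defined. The six-functor formalism of Fargues--Scholze provides the projection formula for $Rq_!$ in this setting, yielding a natural isomorphism
\[
  Rq_!\bigl(p^\ast \mathcal{F}\otimes_\Lambda q^\ast \mathrm{det}_M^\ast \mathcal{G}\bigr)
    \;\cong\; Rq_!\, p^\ast \mathcal{F} \otimes_\Lambda \mathrm{det}_M^\ast \mathcal{G}
    \;=\; \CT_{P,!}(\mathcal{F})\otimes_\Lambda \mathrm{det}_M^\ast \mathcal{G}.
\]
Combining the two displayed isomorphisms gives the asserted formula. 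The final ``In particular'' assertion is then immediate: if $\mathcal{F}$ lies in $D_{\et,\mathrm{cusp}}(\Bun_n,\Lambda)$, then $\CT_{P,!}(\mathcal{F})=0$ for every proper standard parabolic $P$, so by the formula $\CT_{P,!}(\mathcal{F}\otimes_\Lambda \mathrm{det}^\ast \mathcal{G})$ vanishes as well, and $\mathcal{F}\otimes_\Lambda \mathrm{det}^\ast \mathcal{G}$ is cuspidal.

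The only non-routine point is ensuring that the projection formula for $Rq_!$ is genuinely available here, since $q$ is not representable in locally spatial diamonds; this is the main obstacle I would need to address. However, for cohomologically smooth morphisms of small Artin $v$-stacks the formalism of \cite[Chapter IV]{fargues2021geometrization} provides $Rq_!$ with its expected formal properties, including the projection formula (via descent along a smooth atlas where $q$ becomes representable). With this input, all naturality constraints reduce to standard compatibilities of pullback, tensor product and lower-shriek in a six-functor formalism.
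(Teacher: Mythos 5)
Your argument is correct and follows the paper's own proof essentially step for step: unwind $\CT_{P,!}$, use the commutativity of the determinant diagram together with monoidality of $p^\ast$ to trade $\mathrm{det}^\ast$ for $q^\ast\mathrm{det}_M^\ast$, and conclude with the projection formula for $Rq_!$. Your explicit remark that the projection formula must be available for $Rq_!$ even though $q$ is not representable in locally spatial diamonds is a sensible flag; the paper takes this for granted by appealing to the formalism of \cite[Chapter IV]{fargues2021geometrization}.
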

\begin{proof}
  Using (\Cref{eq:2-diagram-with-determinants}) we can calculate
  \[
    \begin{matrix}
      & \CT_{P,!}(\mathcal{F}\otimes_\Lambda \mathrm{det}^\ast\mathcal{G}) \\
      \cong & Rq_!(p^\ast(\mathcal{F}\otimes_\Lambda \mathrm{det}^\ast\mathcal{G})) \\
      \cong & Rq_!(p^\ast(\mathcal{F})\otimes_\Lambda \mathrm{det}^\ast_P\mathcal{G})) \\
      \cong & Rq_!(p^\ast(\mathcal{F})\otimes_\Lambda q^\ast\mathrm{det}^\ast_M \mathcal{G})) \\
      \cong & \CT_{P,!}(\mathcal{F})\otimes_{\Lambda}\mathrm{det}_M^\ast\mathcal{G},
    \end{matrix}
  \]
  where the last isomorphism comes from the projection formula.
\end{proof}

\begin{lemma}
  \label{sec:const-terms-twist-1-no-rank-one-stuff-cuspidal}
  Let $\mathcal{G}\in D_\et(\Bun_1,\Lambda)$. If $n\geq 2$ and $\mathrm{det}^\ast \mathcal{G}$ is cuspidal, then
  $\mathcal{G}=0$.
\end{lemma}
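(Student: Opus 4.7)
The strategy is to apply \Cref{sec:const-terms-twist-1-constant-terms-and-twisting} with $\mathcal{F} = \Lambda$ (the constant sheaf) and then to extract the vanishing of $\mathcal{G}$ at every geometric point of $\Bun_1$ by testing against carefully chosen points in $\Bun_M$.

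First I would fix a proper maximal standard parabolic $P \subseteq G$ with Levi $M = \GL_{n-1} \times \GL_1$ --- such a $P$ is proper since $n \geq 2$ --- so that $\mathrm{det}_M \colon \Bun_M \to \Bun_1$ is the map $(\mathcal{E}_1, \mathcal{E}_2) \mapsto \mathrm{det}(\mathcal{E}_1) \otimes \mathcal{E}_2$. Applying \Cref{sec:const-terms-twist-1-constant-terms-and-twisting} with $\mathcal{F} = \Lambda$ and invoking cuspidality of $\mathrm{det}^\ast \mathcal{G}$ gives
\[
  \CT_{P,!}(\Lambda) \otimes_\Lambda \mathrm{det}_M^\ast \mathcal{G} \;\cong\; \CT_{P,!}(\mathrm{det}^\ast \mathcal{G}) \;=\; 0.
\]
Since $\mathrm{det}_M$ is surjective on geometric points (any line bundle $L$ is the image of $(\mathcal{O}^{n-1}, L)$), it suffices to show $\mathrm{det}_M^\ast \mathcal{G} = 0$, and for this to verify vanishing stalk by stalk at geometric points of $\Bun_M$.

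Given $\bar{x}$ corresponding to $\mathcal{O}(d) \in \Bun_1(k)$, I would take the geometric point $\bar{y} = (\mathcal{E}_1, \mathcal{E}_2) \in |\Bun_M|$ with $\mathcal{E}_1 := \mathcal{O}(-N)^{n-1}$ and $\mathcal{E}_2 := \mathcal{O}(d + (n-1)N)$, for $N$ chosen so that $d + nN > 0$. Then $\mathrm{det}_M(\bar{y}) = \mathcal{O}(d) = \bar{x}$, while the slopes of $\mathcal{E}_1$ are all equal to $-N$ and hence strictly less than the slope $d+(n-1)N$ of $\mathcal{E}_2$; therefore $\mathrm{Hom}(\mathcal{E}_2, \mathcal{E}_1) = 0$ and $\mathrm{Ext}^1(\mathcal{E}_2, \mathcal{E}_1) \neq 0$. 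The fiber of $q \colon \Bun_P \to \Bun_M$ over $\bar{y}$, described as in \Cref{sec:const-term-eisenst-2-ext-spaces-of-vector-bundles}, is therefore the positive Banach-Colmez space $\mathcal{E}xt^1_X(\mathcal{E}_2, \mathcal{E}_1)$, with no residual stacky automorphisms.

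By base change for the cohomologically smooth morphism $q$, the stalk $(\CT_{P,!}(\Lambda))_{\bar{y}} = (q_!\Lambda)_{\bar{y}}$ identifies with $R\Gamma_c$ of this Banach-Colmez fiber, which by the computations for positive Banach-Colmez spaces (compare \cite[Section II.3]{fargues2021geometrization}) is a shift of $\Lambda$ --- the Tate twists being trivialized by our fixed $\sqrt{q} \in \Lambda$ --- and hence $\otimes$-invertible in $D(\Lambda)$. The vanishing of the tensor product above at $\bar{y}$ therefore forces $\mathcal{G}_{\bar{x}} = 0$, and since $\bar{x}$ was arbitrary we conclude $\mathcal{G} = 0$. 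The main technical obstacle is to make the stalk computation rigorous in the Fargues-Scholze v-stack formalism, both the base change statement for $q_!$ at a geometric point of $\Bun_M$ and the $\otimes$-invertibility of $R\Gamma_c$ of the positive Banach-Colmez fiber; both should follow from results in \cite{fargues2021geometrization}.
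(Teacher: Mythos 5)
Your proof is correct in spirit and shares the same starting point as the paper's (applying \Cref{sec:const-terms-twist-1-constant-terms-and-twisting} with $\mathcal{F}=\Lambda$), but it diverges from the paper's argument at the key step. The paper argues globally: from the proof of \Cref{sec:const-term-eisenst-1-properties-of-p-q}, one gets $Rq_!q^!\cong\mathrm{Id}_{\Bun_M}$ because $q$ is a relative fibration with cohomologically contractible (stacky quotient of Banach--Colmez) fibers and $q^!$ commutes with base change. This directly exhibits $\CT_{P,!}(\Lambda)$ as a rank-$1$ local system on all of $\Bun_M$, hence $\otimes$-invertible, and then $\mathrm{det}^\ast_M\mathcal{G}=0$ follows at once. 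You instead argue stalkwise: over each geometric point $\bar{x}$ of $\Bun_1$ you pick a well-chosen lift $\bar{y}\in\Bun_M$ for which $\mathrm{Hom}(\mathcal{E}_2,\mathcal{E}_1)=0$ so that the fiber of $q$ has no automorphisms and is a genuine (non-stacky) Banach--Colmez space, and then you compute the stalk of $\CT_{P,!}(\Lambda)$ there via base change. Both routes work. The paper's is cleaner and avoids choosing points or worrying about what the fiber looks like over arbitrary $\bar y$; yours is more concrete and has the mild advantage of side-stepping the stacky structure of the generic fibers of $q$, at the cost of needing to invoke base change for $q_!$ explicitly and to choose $N$ slope-by-slope.

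Two small cautions, neither of which breaks the argument. First, you call the fiber $\mathcal{E}xt^1_X(\mathcal{E}_2,\mathcal{E}_1)$ a \emph{positive} Banach--Colmez space, but with your choice of slopes the bundle $\mathcal{E}_2^\vee\otimes\mathcal{E}_1$ has all slopes equal to $-(d+nN)<0$, so the fiber is the $v$-sheaf $T\mapsto H^1(X_T,\mathcal{E}_2^\vee\otimes\mathcal{E}_1)$, i.e.\ a Banach--Colmez space of ``$H^1$'' type (denoted $\mathrm{BC}(\mathcal{F}[1])$ in \cite{fargues2021geometrization}), not a positive one; nevertheless its $R\Gamma_c(-,\Lambda)$ is still a shift and twist of $\Lambda$, which is all you need. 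Second, the claim that the Tate twists are ``trivialized by our fixed $\sqrt{q}\in\Lambda$'' is a slight red herring: $\sqrt{q}$ trivializes half-integral twists in the Satake normalization, not general Tate twists; the point is simply that a Tate twist of $\Lambda$ is $\otimes$-invertible in $D(\Lambda)$ regardless, so invertibility of the stalk holds as claimed.
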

\begin{proof}
  From the proof of \Cref{sec:const-term-eisenst-1-properties-of-p-q} we can conclude that
  \[
    Rq_!q^!\cong \mathrm{Id}_{\Bun_M},
  \]
  as $q$ is a relative fibration with contractible fibers, and $q^!$ commutes with base change to fibers.
  This implies that for each parabolic $P\subseteq G$ the object
  \[
    \mathrm{CT}_{P,!}(\Lambda)
  \]
  is $\Lambda$-local system of rank $1$ on $\Bun_M$.
  If $n\geq 2$, we can choose some \textit{proper} parabolic $P\subseteq G$ and use \Cref{sec:const-terms-twist-1-constant-terms-and-twisting} to see
  \[
    0=\mathrm{CT}_{P,!}(\mathrm{det}^\ast \mathcal{G})=\CT_{P,!}(\Lambda)\otimes_{\Lambda} \mathrm{det}^\ast_M \mathcal{G}.
  \]
  Thus, we obtain that $\mathrm{det}^\ast_M \mathcal{G}=0$, which implies $\mathcal{G}=0$ as $\Bun_M\to \Bun_1$ is surjective.
\end{proof}

\subsection{Geometrically cuspidal representations}
\label{sec:cuspidal-subcategory}

The next results give some properties of cuspidal objects in
$D_\et(\Bun_n,\Lambda)$.

\begin{lemma}
  \label{sec:v_m-1-implies-cuspidal-implies-support-on-non-semistable-locus}
  Let $\mathcal{F}\in D_{\et,\rm cusp}(\Bun_n,\Lambda)$. Then $\mathcal{F}$
  is supported on the semistable locus
  $\Bun_n^{\rm ss}\subseteq \Bun_n$.
\end{lemma}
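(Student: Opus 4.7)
The plan is, for each non-basic $b\in B(G)$, to show that $j_b^\ast\mathcal{F}=0$, by using the canonicity of the Harder--Narasimhan filtration of $\mathcal{E}_b$. Fix such a $b$, and let $P=P_b\subsetneq G$ be the standard proper parabolic stabilizing the HN filtration of $\mathcal{E}_b$, with Levi $M=M_b$ and basic lift $b_M\in B(M)$ whose slopes are strictly decreasing across the blocks of $M$. Consider the induction correspondence
\[
\Bun_n\xleftarrow{\,p\,}\Bun_P\xrightarrow{\,q\,}\Bun_M
\qquad\text{together with}\qquad
V:=q^{-1}\bigl(\Bun_M^{b_M}\bigr)\subseteq\Bun_P,
\]
where $V$ is open since $b_M$ is basic (so $\Bun_M^{b_M}$ is open in $\Bun_M$).

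The geometric crux I would first establish is that $p$ restricts to an isomorphism $p|_V\colon V\xrightarrow{\sim}\Bun_n^b$. An $S$-point of $V$ is a $P$-reduction of a $G$-bundle on $X_S$ whose associated graded has relative class $b_M$; the strictly decreasing slopes of $b_M$ force this $P$-reduction to coincide with the relative HN filtration of the underlying $G$-bundle, which thus has HN class $b$. Conversely, the HN filtration of any bundle in $\Bun_n^b$ provides such a $P$-reduction. Uniqueness and functoriality of the HN filtration upgrade this bijection on $S$-points to an isomorphism of $v$-stacks. Moreover, by the analysis of $q$ in the proof of \Cref{sec:const-term-eisenst-1-properties-of-p-q}, the restriction $q|_V\colon V\to\Bun_M^{b_M}$ is cohomologically smooth, exhibited as a tower of positive Banach--Colmez extension spaces.

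Invoking cuspidality, $\CT_{P,!}(\mathcal{F})=Rq_!(p^\ast\mathcal{F})=0$; restricting to the open $\Bun_M^{b_M}\subseteq\Bun_M$ and using $p|_V$ to identify sheaves on $V$ with sheaves on $\Bun_n^b$, proper base change yields
\[
R(q|_V)_!\bigl(\mathcal{F}|_{\Bun_n^b}\bigr)=0\qquad\text{in }D_\et(\Bun_M^{b_M},\Lambda).
\]

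The main obstacle is to conclude from this vanishing that $\mathcal{F}|_{\Bun_n^b}=0$, i.e., that $R(q|_V)_!$ is conservative on $D_\et(\Bun_n^b,\Lambda)$. The morphism $q|_V$ is a classifying-stack fibration whose fibers are modelled on the unipotent radical $N_b$ of the split extension $\Aut(\mathcal{E}_b)\cong N_b\rtimes M_b(E)$; this $N_b$ is an iterated extension of positive Banach--Colmez vector spaces of the form $H^0(X_S,\mathcal{H}om(\mathrm{gr}_j,\mathrm{gr}_i))$ for $i<j$ in the HN order. My plan is to dévissage along the derived series of $N_b$, reducing to the case of a single positive Banach--Colmez line $\mathrm{BC}(\mathcal{O}(m))$ with $m\geq 1$. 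There, the fact that for $\ell\neq p$ the compactly supported étale cohomology of such a space is concentrated in a single degree with invertible coefficient makes $R(q|_V)_!$ differ from pullback along a section of $q|_V$ only by a cohomological shift and Tate twist, hence conservative. Combining these ingredients then yields $\mathcal{F}|_{\Bun_n^b}=0$ and completes the proof.
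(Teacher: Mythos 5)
Your argument is correct and uses the same geometric input as the paper's: the identification $p|_V\colon V\xrightarrow{\sim}\Bun_n^b$ (your $V$ is the stack the paper calls $\Bun_P^b:=\Bun_P\times_{\Bun_M}\Bun_M^b$) and the classifying stack structure of $q|_V$ with fiber the iterated positive Banach--Colmez unipotent $N_b$. What differs is how the adjunction is deployed. The paper shows that $j_{b,\ast}$ lands inside the essential image of $\Eis_{P,\ast}=Rp_\ast Rq^!$ — using that $(q|_V)^!$ is an equivalence by \cite[Proposition V.2.1.]{fargues2021geometrization} and that $p|_V$ is an isomorphism — and then kills $\Hom(j_b^\ast\mathcal{F},-)$ by cuspidality and $\CT_{P,!}\dashv\Eis_{P,\ast}$. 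You instead restrict the vanishing $\CT_{P,!}(\mathcal{F})=0$ to the open $\Bun_M^{b_M}$ by base change and prove conservativity of $(q|_V)_!$. These are dual reformulations of one and the same statement; your version has the small advantage of not having to exhibit preimages under $\Eis_{P,\ast}$ on all of $\Bun_M$, while the paper's avoids the conservativity check. One point where you should streamline: the dévissage to a single $\mathrm{BC}(\mathcal{O}(m))$ and the appeal to the explicit compactly supported cohomology is unnecessary and is the shakiest part of your plan. It suffices to cite \cite[Proposition V.2.1.]{fargues2021geometrization}, which says that $(q|_V)^\ast$ is an equivalence; since $(q|_V)_!=(q|_V)_\natural\bigl((-)\otimes((q|_V)^!\Lambda)^\vee\bigr)$ with $(q|_V)_\natural$ the left adjoint (hence inverse) of $(q|_V)^\ast$ and $(q|_V)^!\Lambda$ invertible by cohomological smoothness, $(q|_V)_!$ is itself an equivalence, in particular conservative.
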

\begin{proof}
  Let $b\in B(G)$ be a non-semistable point and let $M$ be
  the standard Levi associated to $G_b$. In particular, we can view $b\in B(M)$. Let $P$ be the standard
  parabolic containing $M$. Then the fiber of $\Bun_P\to \Bun_{M}$
  over the $M$-torsor given by the associated graded of the HN-filtration
  on $\mathcal{E}_b$ maps isomorphically to $\Bun_{n}$ with image $b$
  as the Harder-Narasimhan filtration is canonical. Moreover,
    pull back along the morphism
    $\mathrm{Bun}_P^b:=\Bun_P\times_{\Bun_M} \Bun_M^b$ defines an
    equivalence
    $$
    D_\et(\Bun_M^b,\Lambda)\cong D_\et(\Bun_P^b,\Lambda)
    $$
    because the fiber is a classifying stack for a positive Banach-Colmez space, cf.\ \cite[Proposition V.2.1.]{fargues2021geometrization}.
    This implies that the image of
    $j_{b,\ast }(D_\et([\ast/G_b(E)],\Lambda)$ is contained in the image
    of $\mathrm{Eis}_{P,\ast}(D_\et(\Bun_M,\Lambda))$.  Now let
    $\mathcal{F}\in D_\et(\Bun_n,\Lambda)$ be cuspidal. Then by definition
    $$
    \mathrm{CT}_{P,!}(\mathcal{F})=0.
    $$
   Let $\mathcal{G}\in D_\et([\ast/G_b(E)],\Lambda)$. As $j_{b,\ast}(D_\et([\ast/G_b(E)],\Qlbar))$ is in the image of $\mathrm{Eis}_{P,\ast}$, 
   $$
   j_{b,\ast}(\mathcal{G}) = \mathrm{Eis}_{P,\ast}(\mathcal{G}^\prime),
   $$
   for some $\mathcal{G}^\prime \in D_\et([\ast/G_b(E)],\Lambda)$. Hence, by adjunction,
    $$
    \Hom(j_b^\ast
    \mathcal{F},\mathcal{G})=\Hom(\mathrm{CT}_{P,!}(\mathcal{F}),
    \mathcal{G}^\prime)=0.
    $$
    Since this holds for all $\mathcal{G}\in D_\et([\ast/G_b(E)],\Lambda)$, this implies
    $j_{b}^\ast \mathcal{F}=0$ as desired.
\end{proof}

Let $b\in B(G)$ be a basic class. Recall that a smooth representation $\pi$ of $G_b(E)$
is said to be \textit{cuspidal}, if it has vanishing Jacquet modules for
all choices of proper parabolic subgroups. Recall also from \Cref{sec:category-of-lisse-l-adic-sheaves} that we denote by
$$
\mathcal{F}_\pi\in D_\et([\ast/G_b(E)],\Lambda)
$$
the sheaf associated to a representation $\pi\in \mathrm{Rep}^\infty_\Lambda G_b(E)$ .

We introduce the following convenient terminology.

\begin{definition}
  \label{sec:const-term-funct-definition-geometrically-cuspidal}
  We call $\pi\in \mathrm{Rep}^\infty_{\Lambda} G_b(E)$
  geometrically cuspidal, if
  $j_{b,!}(\mathcal{F}_\pi)\in D_\et(\Bun_n,\Lambda)$ is a cuspidal object.
\end{definition}

\begin{proposition}
  \label{sec:const-term-funct-prop-geom-cuspi-implies-supercuspidal}
  Let $\pi\in \mathrm{Rep}^\infty_{\Lambda} G_b(E)$ be geometrically
  cuspidal. Then $\pi$ is cuspidal.
  If moreover $n\geq 2$, then $\pi$ is not the inflation of a representation of $E^\times$ along $\mathrm{Nrd}\colon G_b(E)\to E^\times$. 
\end{proposition}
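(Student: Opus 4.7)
The plan is to prove the two assertions separately, with the second reducing to the first plus a direct geometric calculation via the twisting property \Cref{sec:const-terms-twist-1-constant-terms-and-twisting}.

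For the first assertion, I interpret the restriction of $\CT_{P,!}(j_{b,!}\mathcal{F}_\pi)$ to an appropriate basic stratum of $\Bun_M$ as a Jacquet module of $\pi$. Fix a proper standard parabolic $P\subseteq G$ with Levi $M$ and take $b_M\in B(M)$ basic whose image in $B(G)$ is $b$; equivalently, each slope of $b_M$ equals $\kappa(b)/n$. For such $b_M$ the Levi $M_{b_M}$ is the Levi of a parabolic $P_b\subset G_b$ obtained by transport along the inner twist, and using the fiber description of $p$ and $q$ from the proof of \Cref{sec:const-term-eisenst-1-properties-of-p-q} one identifies
\[
  \Bun_P\times_{\Bun_n\times\Bun_M}(\Bun_n^b\times\Bun_M^{b_M}) \cong [\ast/P_b(E)].
\]
Combining proper base change with point (3) of \Cref{sec:category-of-lisse-l-adic-sheaves}, the restriction $\CT_{P,!}(j_{b,!}\mathcal{F}_\pi)|_{\Bun_M^{b_M}}$ corresponds to the Jacquet module $r^{G_b}_{M_b}(\pi)$ (up to an irrelevant shift). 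Its vanishing thus forces $r^{G_b}_{M_b}(\pi)=0$, and since every proper parabolic of $G_b$ arises this way from a suitable $P$, $\pi$ is cuspidal.

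For the second assertion, I suppose for contradiction that $\pi=\chi\circ\mathrm{Nrd}$ for a character $\chi\colon E^\times\to\Lambda^\times$, and let $\mathcal{G}\in D_\et(\Bun_1,\Lambda)$ be the sheaf supported on the degree-$\kappa(b)$ component of $\Bun_1$ corresponding there to $\chi^{-1}$. Since $\det|_{\Bun_n^b}\colon[\ast/G_b(E)]\to[\ast/E^\times]$ corresponds on representations to pullback along $\mathrm{Nrd}$, the projection formula gives
\[
  j_{b,!}\mathcal{F}_\pi\otimes\det^\ast\mathcal{G}\cong j_{b,!}\bigl(\mathcal{F}_\pi\otimes\mathcal{F}_{\chi^{-1}\circ\mathrm{Nrd}}\bigr)\cong j_{b,!}\Lambda.
\]
By \Cref{sec:const-terms-twist-1-constant-terms-and-twisting}, cuspidality of $j_{b,!}\mathcal{F}_\pi$ would imply cuspidality of $j_{b,!}\Lambda$. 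I then rule the latter out by splitting into two cases: if $G_b$ has a proper parabolic (i.e., $G_b=\GL_m(D)$ with $m\geq 2$), the trivial representation of $G_b(E)$ has non-vanishing Jacquet modules, so by the first assertion $j_{b,!}\Lambda$ cannot be cuspidal; if instead $G_b=D^\times$, I take $P$ of type $(1,n-1)$ and exhibit the non-vanishing of $\CT_{P,!}(j_{b,!}\Lambda)$ by computing its stalk at a suitable basic point as the compactly supported cohomology of the space of inclusions $\mathcal{O}(a)\hookrightarrow\mathcal{E}_b$ modulo $E^\times$, for $a\ll 0$, which is a positive Banach-Colmez space and thus has non-vanishing cohomology by \cite[Proposition II.3.5]{fargues2021geometrization}.

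The main technical obstacle will be the base change computation in the first paragraph: one must justify the identification of the fiber product with $[\ast/P_b(E)]$ via the inner twist, using that for semistable bundles of matching slopes any reduction of $\mathcal{E}_b$ to $P$ automatically has associated graded in the $b_M$-stratum of $\Bun_M$, and then match the resulting pushforward against the Jacquet functor on smooth representations.
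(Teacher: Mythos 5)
Your proof of the first assertion follows the paper's strategy: one restricts $\CT_{P,!}(j_{b,!}\mathcal{F}_\pi)$ to a basic stratum of $\Bun_M$, identifies the relevant fibre product with $[\ast/P_b(E)]$, and recognizes the resulting functor as a Jacquet functor, so that geometric cuspidality forces all Jacquet modules to vanish. The paper organizes the verification slightly differently (it starts from a parabolic $P_b\subset G_b$ and uses the canonical identification $\Bun_P\cong\Bun_{P_b}$, $\Bun_M\cong\Bun_{M_b}$ coming from $G$ and $G_b$ being pure inner forms over the curve, and restricts to the stratum $c\in\Bun_M$ given by the trivial $M_b$-torsor), but the content is the same; your description of the fibre product identification and the fact that all extensions of $\mathcal{E}_{b_2}$ by $\mathcal{E}_{b_1}$ with equal slopes remain in $\Bun_n^b$ is exactly the point, and it is correct.

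For the second assertion, your twisting reduction to showing that $j_{b,!}\Lambda$ is not cuspidal is correct and in the spirit of what the paper does (it too relies on \Cref{sec:const-terms-twist-1-constant-terms-and-twisting}), and the case $G_b=\GL_m(D)$ with $m\geq 2$ is handled correctly via part (1). However, the case $G_b=D^\times$ as written has a real gap. The stalk of $\CT_{P,!}(j_{b,!}\Lambda)$ at a basic point $(\mathcal{O}(a),\mathcal{F}_0)\in\Bun_M$ is the compactly supported cohomology of the locus of extensions $0\to\mathcal{O}(a)\to\mathcal{E}\to\mathcal{F}_0\to 0$ with $\mathcal{E}\cong\mathcal{E}_b$, taken inside the $\mathcal{E}xt^1$-stack, and further quotiented by the automorphism groups in play; equivalently, after pushing forward to $\Bun_1$, one looks at \emph{saturated} inclusions $\mathcal{O}(a)\hookrightarrow\mathcal{E}_b$ modulo $E^\times\times G_b(E)$. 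This is \emph{not} a positive Banach--Colmez space: it is an open substack of $\mathrm{BC}(\mathcal{E}_b(-a))\setminus\{0\}$ (and then a quotient stack), and for $a\ll 0$ the subbundle-inclusion locus is a \emph{proper} open subset, since many nonzero maps $\mathcal{O}(a)\to\mathcal{E}_b$ factor through $\mathcal{O}(a')$ with $a<a'<\kappa(b)/n$. So the asserted nonvanishing of compactly supported cohomology does not follow from \cite[Prop.\ II.3.5]{fargues2021geometrization} as stated. The claim is likely salvageable — a better choice is $a=\lfloor\kappa(b)/n\rfloor$, the largest integer below $\kappa(b)/n$, for which \emph{every} nonzero map $\mathcal{O}(a)\to\mathcal{E}_b$ is automatically a subbundle inclusion (no intermediate $\mathcal{O}(a')$ exists), reducing one to $R\Gamma_c$ of $[\mathrm{BC}(\mathcal{E}_b(-a))\setminus\{0\})/(E^\times\times D^\times)]$, whose nonvanishing can then be checked directly — but as written the justification is incorrect and the step needs to be filled in.
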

\begin{proof}
  Let $P_b\subseteq G_b(E)$ be parabolic subgroup with Levi
  quotient $M_b$.
    Recall that $G$ and $G_b$ are pure inner forms over $X$. Thus, as
  $G$ is quasi-split, we can find a parabolic subgroup $P$ with
  Levi quotient $M$ such that
    $$
    \Bun_{P} \cong \Bun_{P_b} ~~ , ~~ \Bun_M \cong \Bun_{M_b}.
    $$
    Let $c\in \Bun_{M}$ be the point determined by the trivial
    $M_b$-torsor under the equivalence $\Bun_{M_b}\cong \Bun_M$. Then
    $$
    \Bun_{P}^c:=\Bun_{P}\times_{\Bun_{M}} \Bun_{M}^c\to \Bun_n\cong
    [\ast/P_b]
    $$
    has image in $\Bun_n^b$. The resulting diagram identifies with
    $$
    [\ast/G_b(\Q_p)]\xleftarrow{p} [\ast/P_b] \xrightarrow{q}
    [\ast/M_b]
    $$
    and thus $\mathrm{CT}_{P,!}$ restricted to $c$ coincides with the
    Jacquet functor for $P_b$.
        The last claim follows from \Cref{sec:const-terms-twist-1-no-rank-one-stuff-cuspidal}.
    This finishes the proof.
  \end{proof}
  
  \begin{remark}
  Assume that $\Lambda$ is an algebraically closed field of characteristic $\ell$. If $\pi$ is a smooth representation $G_b(E)$ over $\Lambda$ which is irreducible, saying that $\pi$ is cuspidal is the same, by Frobenius reciprocity, as saying that $\pi$ is not a subobject of a parabolically induced representation. If the smooth irreducible representation $\pi$ satisfies that it is not a \textit{subquotient} of a parabolically induced representation, then $\pi$ is said to be \textit{supercuspidal}. Cuspidality and supercuspidality are equivalent notions in characteristic $0$, but not modulo $\ell$, where the latter is strictly stronger than the former, cf. e.g. \cite[Corollaire 5]{vigneras1989representations}. 
  \end{remark}

  \begin{example}
    \label{sec:geom-cusp-repr-geom-cusp-representations-for-n-2}
    Assume that $n=2$. Pick $b\in B(G)$ basic with $d:=\kappa(b)\in 2\Z$. If $\pi\in \mathrm{Rep}^\infty_\Lambda G_b(E)$ is irreducible supercuspidal, then $\pi$ is geometrically cuspidal. Indeed, for a fixed $m\geq d$, consider the space
    \[
      Z_{{m}}:=\mathcal{H}om(\mathcal{E}_b,\mathcal{O}({m}))
    \]
    of homomorphisms from $\mathcal{E}_b$, and its open subspace $Z_{{m}}^\circ\subseteq Z_{{m}}$ parametrizing surjections. Over $Z_{{m}}^\circ$ there is the natural $E^\times$-torsor $\widetilde{Z_{{m}}^\circ}$ parametrizing isomorphisms of the kernel {with} $\mathcal{O}(d-{m})$. We have to prove that
    \[
      R\Gamma_{\natural}(G_b(E),R\Gamma_c(\widetilde{Z_{{m}}^\circ},\Lambda)\otimes_{{\Lambda}} \pi)=0
    \]
    But $\widetilde{Z_{{m}}^\circ}$ is isomorphic to $Z^\circ_{m}\times E^\times$ because giving an isomorphism of the kernel with $\mathcal{O}(d-m)$ is an $E^\times$-torsor equivalent to the (split) $E^\times$-torsor of isomorphisms $\mathrm{det}(\mathcal{E}_b^2)\cong \mathcal{O}(d)$. In particular, $G_b(E)$ acts on the factor $E^\times$ via the determinant. One can therefore reduce to proving that 
    \[
        R\Gamma_{\natural}(G_b(E),R\Gamma_c(Z_{{m}}^\circ,\Lambda)\otimes_{\Lambda} \chi \otimes_{{\Lambda}} \pi)=0,
      \]
      for any character $\chi$. For ${m}>d$ this follows by the known cohomology of the punctured BC-space $Z_{{m}}^{\circ}$ and the fact that $R\mathrm{Hom}_{G_b(E)}(\pi,\chi)=0$ for any character $\chi$ by supercuspidality of $\pi$, cf. case (1) in the proof of \cite[Proposition 6.1]{drevon2021block}\footnote{Proposition 6.1 in \cite{drevon2021block} is stated with $\Lambda=\Flbar$, but the discussion of Case (1) in the proof applies without this hypothesis.}. For $m=d$, this follows as supercuspidal representations are cuspidal, i.e., they have vanishing Jacquet functors. Conversely, this argument shows that if $\pi$ is irreducible and geometrically cuspidal, it must be cuspidal (case $m=d$). Moreover, if $\pi$ was not supercuspidal, which can happen only when $q=-1$ modulo $\ell$, then $\pi$ would be a twist by a character $\chi$ of the representation $\pi_1$ from \cite[Corollaire 5]{vigneras1989representations}. In particular, we would have
      $$
      \mathrm{Ext}_{G_b(E)}^1(\pi_1, (-1)^{\mathrm{val}} \chi)\neq 0,
      $$
   contradicting geometric cuspidality of $\pi$ (case $m<d$).
      
      Hence, when $n=2$, for any $b\in B(G)$ basic with $\kappa(b)$ even, a smooth irreducible representation $\pi$ of $G_b(E)$ is geometrically cuspidal if and only if it is supercuspidal. 
       \end{example}

\section{Averaging functors}
\label{sec:averaging-functors}

We now introduce and analyze the averaging functors which are alluded to in the title.
We use the same notation as in \Cref{sec:notation}.

\subsection{The definition of the averaging functors}
\label{sec:defin-aver-funct}

Let $d,n\geq 0$ and let
\[
  \Mod^d_n
\]
be the small $v$-stack on $\Perf_k$ sending $S\in \Perf_k$ to the
groupoid of the data
\[
  (\mathcal{E},\mathcal{E}^\prime,x,\gamma)
\]
with $\mathcal{E},\mathcal{E}^\prime\in \Bun_n(S)$ satisfying
$\kappa(\mathcal{E}^\prime)-\kappa(\mathcal{E})=d$, $x\in \Div^1(S)$,
and
\[
  \gamma\colon \mathcal{E}\hookrightarrow \mathcal{E}^\prime
\]
a morphism of vector bundles, which is fiberwise injective with
cokernel supported at $x$. We will actually only need the case that $d=1$.
Consider the diagram
\begin{equation}
  \label{eq:2-correspondence-defining-averaging-functors}
  \xymatrix{
    &\mathrm{Mod}^1_{n}\ar[ld]^{\overleftarrow{h}}\ar[rd]_{\overrightarrow{h}}\ar[r]^-{\alpha} & \mathrm{Div}^1 \\
    \Bun_n & & \Bun_n
  }
\end{equation}
with
  \[
    \begin{matrix}
      \overleftarrow{h}(\mathcal{E},\mathcal{E}^\prime,x,\gamma)=\mathcal{E}, \\
      \overrightarrow{h}(\mathcal{E},\mathcal{E}^\prime,x,\gamma)=\mathcal{E}^\prime, \\
      \end{matrix}
    \]
  and let
\[
  \mathbb{L}\in D_{\et}(\Div^1,\Lambda)
\]
be any object, e.g., the sheaf associated to a
$\Lambda$-representation of the Weil group $W_E$ of $E$.

Note that $\overleftarrow{h}$ and $\overrightarrow{h}$ are representable in locally spatial diamonds, proper, of finite $\mathrm{dim.trg}$ and cohomologically smooth.

\begin{definition}
  \label{sec:averaging-functors-1-definition-averaging-functor}
  We define the averaging functor associated with $\mathbb{L}\in D_{\et}(\Div^1,\Lambda)$ to be
  \[
    \Av_{\mathbb{L},n}\colon D_\et(\Bun_n,\Lambda)\to
    D_\et(\Bun_n,\Lambda), ~ \mathcal{F}\mapsto
    \overrightarrow{h}_!(\overleftarrow{h}^\ast(\mathcal{F}){\otimes}_\Lambda
    \alpha^{\ast}\mathbb{L}(\frac{n-1}{2})[n-1])
  \]
\end{definition}
If $n$ is clear from the context, then we write
\[
  \mathrm{Av}_{\mathbb{L}}:=\mathrm{Av}_{\mathbb{L},n}.
\]

Thus, the averaging functors provide examples of, potentially interesting, endofunctors of $D_{\et}(\Bun_n,\Lambda)$.

\subsection{Averaging functors and constant terms}
\label{sec:aver-funct-const}

One of the most important properties of the averaging functors is their
commutation with constant terms.

\begin{proposition}
  \label{sec:averaging-functors-2-averaging-functors-and-constant-terms}
  Let $P\subseteq G$ be a maximal standard parabolic, and let
  $M=\GL_{n_1}\times \GL_{n_2}$ be its Levi quotient. Then there is a
  canonical exact triangle
  \[
    (\Av_{\mathbb{L}(\frac{n-n_1}{2})[n-n_1],n_1}\times \mathrm{Id}_{\Bun_{n_2}})\circ
    \CT_{P,!} \to \CT_{P,!}\circ \Av_{\mathbb{L},n}\to
    (\mathrm{Id}_{\Bun_{n_1}}\times \Av_{\mathbb{L}(\frac{n-n_2}{2})[n-n_2],n_2})\circ \CT_{P,!}
  \]
  of functors
  $D_\et(\Bun_n,\Lambda)\to
  D_\et(\Bun_M,\Lambda)=D_\et(\Bun_{n_1}\times \Bun_{n_2},\Lambda)$.
\end{proposition}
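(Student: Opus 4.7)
My plan is to stratify the fiber product
\[
\widetilde{\Mod}\;:=\;\Bun_P\times_{\Bun_n,\,\overrightarrow{h}}\Mod^1_n,
\]
which classifies data $(\mathcal{E},\mathcal{E}',x,\gamma,\mathcal{E}'_1\subset\mathcal{E}')$. Let $h\colon\widetilde{\Mod}\to\Bun_n$ remember $\mathcal{E}$, $\alpha\colon\widetilde{\Mod}\to\Div^1$ remember $x$, and $\pi\colon\widetilde{\Mod}\to\Bun_M$ remember $(\mathcal{E}'_1,\mathcal{E}'/\mathcal{E}'_1)$. Since $\overrightarrow{h}$ is proper and representable in locally spatial diamonds, proper base change yields
\[
\CT_{P,!}\circ\Av_{\mathbb{L},n}(\mathcal{F})\;\cong\;R\pi_!\bigl(h^\ast\mathcal{F}\otimes_\Lambda\alpha^\ast\mathbb{L}(\tfrac{n-1}{2})[n-1]\bigr).
\]

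The key stratification: set $\mathcal{F}_1:=\gamma^{-1}(\mathcal{E}'_1)$; the equality $\mathcal{F}_1=\mathcal{E}'_1$ amounts to the vanishing of the map $\mathcal{E}'_1|_x\to(\mathcal{E}'/\mathcal{E})|_x$ of vector bundle fibers into a fiberwise one-dimensional target, and so cuts out a closed substack $i\colon Z\hookrightarrow\widetilde{\Mod}$ with complementary open $j\colon U\hookrightarrow\widetilde{\Mod}$. On $U$ the modification is ``in the sub'': $\mathcal{F}_1\hookrightarrow\mathcal{E}'_1$ is a length-one modification and $\mathcal{E}/\mathcal{F}_1\xrightarrow{\sim}\mathcal{E}'/\mathcal{E}'_1$. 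On $Z$ the modification is ``in the quotient'': $\mathcal{F}_1=\mathcal{E}'_1\subset\mathcal{E}$ and $\mathcal{E}/\mathcal{E}'_1\hookrightarrow\mathcal{E}'/\mathcal{E}'_1$ is of length one. Applying the excision triangle $j_!j^\ast\to\Id\to i_\ast i^\ast$ to $h^\ast\mathcal{F}\otimes\alpha^\ast\mathbb{L}(\tfrac{n-1}{2})[n-1]$ and pushing forward by $\pi$ produces an exact triangle whose middle term is $\CT_{P,!}\circ\Av_{\mathbb{L},n}(\mathcal{F})$.

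The moduli description of $U$ via $(\mathcal{F}_1\hookrightarrow\mathcal{E}'_1,\,\mathcal{E}_2:=\mathcal{E}/\mathcal{F}_1,\,\text{extension class of }\mathcal{E}\text{ by }(\mathcal{F}_1,\mathcal{E}_2))$ yields a canonical isomorphism
\[
U\;\cong\;\Bun_P\times_{\Bun_M}(\Mod^1_{n_1}\times\Bun_{n_2}),
\]
where the fiber product uses $q\colon\Bun_P\to\Bun_M$ on one side and $\overleftarrow{h}_{n_1}\times\Id_{\Bun_{n_2}}$ on the other. Through this isomorphism $h|_U$ becomes $p$ composed with the left projection, $\alpha|_U$ becomes $\alpha_{n_1}$ composed with the right projection, and $\pi|_U$ factors through $\overrightarrow{h}_{n_1}\times\Id_{\Bun_{n_2}}$. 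Combining proper base change along the proper Hecke map $\overrightarrow{h}_{n_1}$ with smooth base change and the projection formula along the cohomologically smooth map $q$ of \Cref{sec:const-term-eisenst-1-properties-of-p-q}, and using the numerical identities $n-1=(n-n_1)+(n_1-1)$ and $\tfrac{n-1}{2}=\tfrac{n-n_1}{2}+\tfrac{n_1-1}{2}$ to match shifts and twists, the $U$-pushforward is identified with $(\Av_{\mathbb{L}(\tfrac{n-n_1}{2})[n-n_1],n_1}\times\Id_{\Bun_{n_2}})\circ\CT_{P,!}(\mathcal{F})$. A symmetric analysis on $Z$ identifies the $Z$-pushforward with $(\Id_{\Bun_{n_1}}\times\Av_{\mathbb{L}(\tfrac{n-n_2}{2})[n-n_2],n_2})\circ\CT_{P,!}(\mathcal{F})$.

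The main obstacle is the moduli-theoretic verification of the isomorphism $U\cong\Bun_P\times_{\Bun_M}(\Mod^1_{n_1}\times\Bun_{n_2})$ and its $Z$-analogue, together with the compatibility of proper base change (for the Hecke maps) and smooth base change (for $q$) in the v-stack framework of \cite{fargues2021geometrization}; once these are in hand the shift and twist accounting is automatic from the two displayed numerical identities.
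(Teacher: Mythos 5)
Your proposal is correct and takes essentially the same approach as the paper: you form the same fiber product ($\widetilde{\Mod}$ is the paper's $Y$), use the same open--closed decomposition (the surjectivity of $\mathcal{E}\to\mathcal{E}'_2$ is equivalent to the non-vanishing of $\mathcal{E}'_1|_x\to(\mathcal{E}'/\mathcal{E})|_x$), and the same moduli-theoretic identifications of $U$ and $Z$ with fiber products of $\Bun_P$ against $\Mod^1_{n_1}\times\Bun_{n_2}$ and $\Bun_{n_1}\times\Mod^1_{n_2}$, with the same shift/twist bookkeeping.
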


Here, we used the notation $\Av_{\mathbb{L},n_1}\times \mathrm{Id}_{\Bun_{n_2}}, \mathrm{Id}_{\Bun_{n_1}}\times \Av_{\mathbb{L},n_2}$ for the functors arising from the correspondence (with kernel the pullback of $\mathbb{L}$) obtained by taking the product of $\eqref{eq:2-correspondence-defining-averaging-functors}$ (for $n=n_1$ resp.\ $n=n_2$) with $\Bun_{n_2}$ resp.\ $\Bun_{n_1}$. For an analog in the classical case we
refer to \cite[Lemma 9.8]{frenkel_gaitsgory_vilonen}.
\begin{proof}
  Let
  \[
    Y:=\Mod^1_n\times_{\overrightarrow{h},\Bun_n}\Bun_P
  \]
  be the stack parametrizing injections
  $\gamma\colon \mathcal{E}\to \mathcal{E}^\prime$ of
  $\mathcal{E},\mathcal{E}^\prime\in \Bun_n$ with
  $\kappa(\mathcal{E}^\prime)=\kappa(\mathcal{E})+1$ and filtrations
  \[
    0\to \mathcal{E}^\prime_1\to \mathcal{E}^\prime\to
    \mathcal{E}^\prime_2\to 0
  \]
  with $\mathcal{E}^\prime_i\in \Bun_{n_i}, i=1,2$.  Let
  \[
    j\colon U\to Y
  \]
  be the open substack defined by the condition that the composition
  $\mathcal{E}\to \mathcal{E}^\prime\to \mathcal{E}^\prime_2$ is
  surjective.  Let
  \[
    i\colon Z\to Y
  \]
  be the closed complement of $U\subseteq Y$.
  Let $\mathcal{K}\in D_{\et}(\Bun_n\times \Bun_M,\Lambda)$ be the kernel representing the composition
  \[
    \CT_{P,!}\circ \Av_{\mathbb{L},n},
  \]
  i.e., $\mathcal{K}$ is the $!$-pushforward for the morphism
  \[
    Y\to \Bun_n\times \Bun_M
  \]
  of the pullback $\mathbb{L}_Y\in D_\et(Y,\Lambda)$ of $\mathbb{L}$ along
  \[
    Y\to \mathrm{Mod}^1_n\to \mathrm{Div}^1.
  \]
  Our aim is to identify the functors 
  \[
    (\Av_{\mathbb{L}(\frac{n-n_1}{2})[n-n_1],n_1}\times \mathrm{Id}_{\Bun_{n_2}})\circ
    \CT_{P,!}, (\Av_{\mathbb{L}(\frac{n-n_2}{2})[n-n_2],n_1}\times \mathrm{Id}_{\Bun_{n_2}})\circ
    \CT_{P,!}\colon D_\et(\Bun_n,\Lambda)\to D_\et(\Bun_M,\Lambda)
  \]
  as the functors with kernels the two outer terms in the triangle
  \[
    j_!j^\ast \mathcal{K}\to \mathcal{K}\to i_\ast i^\ast \mathcal{K}.
  \]
  This will imply the proposition. We start with identifying the functor with kernel $j_!j^\ast\mathcal{K}$. It is calculated via the correspondence
\[
  \xymatrix{
    &U\ar[ld]\ar[rd]\ar[r] & \mathrm{Div}^1 \\
    \Bun_n & & \Bun_M
    }
  \]
  with kernel the pullback $\mathbb{L}_U\in D_\et(U,\Lambda)$ of $\mathbb{L}$. Over $U$ the
  intersection
  \[
    \mathcal{E}_1:=\mathcal{E}\cap \mathcal{E}_1^\prime,
  \]
  i.e., the kernel of the surjection
  $\mathcal{E}\twoheadrightarrow \mathcal{E}^\prime_2$, is again a
  vector bundle. Note that the injection
  $\mathcal{E}_1\hookrightarrow \mathcal{E}_1^\prime$ lies in
  $\mathrm{Mod}_{n_1}^1$.  The morphism
  \[
    U\to \Mod^1_{n_1}\times_{\overleftarrow{h},\Bun_{n_1}}\Bun_P
  \]
  sending the above data to
  $\mathcal{E}_1\hookrightarrow \mathcal{E}_1^\prime$ and the
  filtration
  \[
    0\to \mathcal{E}_1\to \mathcal{E}\to \mathcal{E}_2^\prime\to 0
  \]
  is an equivalence as necessarily $\mathcal{E}^\prime$ is the
  pushout of the diagram of
  \[
    \mathcal{E}\leftarrow \mathcal{E}_1\rightarrow
    \mathcal{E}_1^\prime.
  \]
  In particular, we obtain the diagram is a diagram
  \[
    \xymatrix{
      & & U \ar[ld]\ar[rd]& & \\
      & \Bun_P\ar[ld]\ar[rd]& & \Mod^1_{n_1}\times \Bun_{n_2}\ar[ld]\ar[rd] & \\
      \Bun_n & & \Bun_{n_1}\times \Bun_{n_2} & & \Bun_{n_1}\times \Bun_{n_2}
    }
  \]
  with cartesian square.
  Now, $\mathbb{L}_U\in D_\et(U,\Lambda)$ is the pullback of $\mathbb{L}\in D_\et(\Div^1,\Lambda)$ along the morphism
  \[
    U\to \Mod^1_{n_1}\times \Bun_{n_2}\to \Div^1.
  \]
  This implies that the functor induced by $j_!j^\ast \mathcal{K}$ is $(\Av_{\mathbb{L}(\frac{n-n_1}{2})[n-n_1],n_1}\times \mathrm{Id}_{\Bun_{n_2}})\circ
    \CT_{P,!} $ as desired.
  Let us now analyze $Z$, i.e., the locus of $Y$ where the composition
  \[
    \mathcal{E}\to \mathcal{E}^\prime\to \mathcal{E}^\prime_2
  \]
  is not surjective (over each geometric point). Over each geometric
  point the cokernel of $\mathcal{E}\to \mathcal{E}^\prime$ is of
  length $1$, and supported on one point. Thus,
  \[
    \mathcal{E}^\prime/\mathcal{E} \cong \mathrm{coker}(\mathcal{E}\to
    \mathcal{E}^\prime_2).
  \]
  In particular, $\mathcal{E}^\prime_1$ lies in $\mathcal{E}$ as its
  image in $\mathcal{E}^\prime/\mathcal{E}$ is trivial.  Moreover, the
  quotient
  \[
    \mathcal{E}_2:=\mathcal{E}/\mathcal{E}^\prime_1
  \]
  is again a vector bundle, and the injection
  $\mathcal{E}_1\to \mathcal{E}_1^\prime$ lies in $\Mod^1_{n_2}$.  The
  map
  \[
    Z\to
    \mathrm{Mod}^1_{n_2}\times_{\overrightarrow{h},\Bun_{n_2}}\Bun_P
  \]
  sending the above data to
  $\mathcal{E}_2\hookrightarrow \mathcal{E}^\prime_2$ and
  $0\to \mathcal{E}_1^\prime\to \mathcal{E}^\prime\to
  \mathcal{E}_2^\prime\to 0$ is an isomorphism as $\mathcal{E}$ is
  necessarily the pullback of the diagram
  \[
    \mathcal{E}^\prime\to \mathcal{E}^\prime_2\leftarrow
    \mathcal{E}_2.
  \]
  Now we can argue as in the previous case and deduce that
  \[
    i_\ast i^\ast \mathcal{K}\in D_\et(\Bun_n\times \Bun_M,\Lambda)
  \]
  is the kernel of the functor
  \[
    (\mathrm{Id}_{\Bun_{n_1}}\times \Av_{\mathbb{L}(\frac{n-n_2}{2})[n-n_2],n_2} )\circ
    \CT_{P,!} 
  \]
  as desired. This finishes the proof.
\end{proof}

\begin{remark}
  \label{sec:aver-funct-const-1-strengthening-for-hecke-functor-and-constant-term}
  For $m\geq 1$, let
  \[
    V_{\mathrm{std},m}\in \mathrm{Rep}_\Lambda \hat{G}
  \]
  be the standard representation, with associated Hecke functor
  \[
    T_{\mathrm{std},m}\colon D_\et(\Bun_m,\Lambda)\to D_\et(\Bun_m\times \Div^1,\Lambda).
  \]
  With the same strategy as in the proof of \Cref{sec:averaging-functors-2-averaging-functors-and-constant-terms} one can prove that there exists a canonical exact triangle
  \[
    (T_{V_{\mathrm{std},n_1}}(\frac{n-n_1}{2})[n-n_1]\times \mathrm{Id}_{\Bun_{n_2}})\circ {\mathrm{CT}_{P,!}} \to {\CT_{P,!}}\circ T_{V_{\mathrm{std},n}} \to (\mathrm{Id}_{\Bun_{n_1}}\times T_{V_{\mathrm{std},n_2}}(\frac{n-n_2}{2})[n-n_2])\circ {\mathrm{CT}_{P,!}}
  \]
  of functors
  \[
    D_\et(\Bun_{G},\Lambda)\to D_{\et}(\Bun_M\times \Div^1,\Lambda).
  \]
\end{remark}

As a first consequence, we get that averaging functors preserve the
cuspidal subcategory.

\begin{lemma}
  \label{sec:averaging-functors-3-averaging-functors-cuspidal-subcategory}
  If $\mathcal{F}\in D_{\et,\mathrm{cusp}}(\Bun_n,\Lambda)$ is a cuspidal
  object, then
  \[
    \mathrm{Av}_{\mathbb{L},n}(\mathcal{F})
  \]
  is again cuspidal.
\end{lemma}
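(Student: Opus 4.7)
The plan is to apply \Cref{sec:averaging-functors-2-averaging-functors-and-constant-terms} directly: the exact triangle produced there exhibits $\CT_{P,!}\circ \mathrm{Av}_{\mathbb{L},n}$ as an extension of two functors, each of which factors through $\CT_{P,!}$ in the first variable. Concretely, given any proper maximal standard parabolic $P\subseteq G$ with Levi $M=\GL_{n_1}\times\GL_{n_2}$, the triangle reads
\[
  (\Av_{\mathbb{L}(\tfrac{n-n_1}{2})[n-n_1],n_1}\times \mathrm{Id}_{\Bun_{n_2}})\circ \CT_{P,!} \to \CT_{P,!}\circ \Av_{\mathbb{L},n}\to (\mathrm{Id}_{\Bun_{n_1}}\times \Av_{\mathbb{L}(\tfrac{n-n_2}{2})[n-n_2],n_2})\circ \CT_{P,!}.
\]
Evaluating this on $\mathcal{F}\in D_{\et,\mathrm{cusp}}(\Bun_n,\Lambda)$, the cuspidality hypothesis forces $\CT_{P,!}(\mathcal{F})=0$, so both the leftmost and rightmost terms vanish. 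By the two-out-of-three property in the triangulated category, the middle term $\CT_{P,!}\circ \Av_{\mathbb{L},n}(\mathcal{F})$ must vanish as well.

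Since this vanishing holds for every proper maximal standard parabolic $P$, and since (as noted right after \Cref{definition-cuspidal-object}) it is enough to check the vanishing of $\CT_{P,!}$ for these parabolics to conclude that an object is cuspidal, we deduce that $\mathrm{Av}_{\mathbb{L},n}(\mathcal{F})\in D_{\et,\mathrm{cusp}}(\Bun_n,\Lambda)$. There is no real obstacle here beyond correctly invoking the previous proposition; the entire content of the lemma is the reduction to the two-out-of-three argument above.
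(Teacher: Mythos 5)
Your proof is correct and is exactly the argument the paper intends when it says the lemma ``follows directly'' from \Cref{sec:averaging-functors-2-averaging-functors-and-constant-terms}: you have simply spelled out the two-out-of-three step and the reduction to maximal parabolics that the paper leaves implicit.
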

\begin{proof}
  This follows directly from
  \Cref{sec:averaging-functors-2-averaging-functors-and-constant-terms}.
\end{proof}

\subsection{Averaging functors and twisting by characters}
\label{sec:aver-funct-twist}

We want to prove a compatibility of averaging functors with twists by characters. In spirit, a similar statement is \cite[Theorem IX.6.1]{fargues2021geometrization}.

Let
\[
  \mathrm{det}\colon \Bun_n\to \Bun_1,\ \mathcal{E}\mapsto \mathrm{det}(\mathcal{E})=\Lambda^n\mathcal{E}
\]
be the determinant morphism.

\begin{proposition}
  \label{sec:aver-funct-twist-1-av-functors-and-twisting}
  Let $\chi \colon W_E\to \Lambda^\times$ be a character, with corresponding $\Lambda$-local system also denoted $\mathbb{L}_{\chi}\in D_\et(\Div^1,\Lambda)$.
  Let $\mathcal{G}\in D_\et(\Bun_1,\Lambda)$ be a Hecke eigensheaf with eigenvalue $\mathbb{L}_{\chi}$. Then for any $\mathcal{F}\in D_\et(\Bun_n,\Lambda)$ there exists a natural isomorphism
  \[
    \mathrm{Av}_{\mathbb{L}}(\mathcal{F}\otimes_\Lambda \mathrm{det}^\ast \mathcal{G})\cong {\mathrm{Av}_{\mathbb{L}\otimes_\Lambda \mathbb{L}_\chi^\vee}}(\mathcal{F})\otimes_\Lambda \mathrm{det}^\ast (\mathcal{G}).
  \]

\end{proposition}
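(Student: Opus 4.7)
The plan is to reduce to the Hecke eigensheaf property of $\mathcal{G}$ by relating the averaging correspondence on $\Bun_n$ to the one on $\Bun_1$ via the determinant morphism. As a first step, I will construct a canonical map $\delta\colon \Mod^1_n \to \Mod^1_1$ sending $(\mathcal{E}\hookrightarrow \mathcal{E}', x, \gamma)$ to $(\mathrm{det}\,\mathcal{E}\hookrightarrow \mathrm{det}\,\mathcal{E}', x, \mathrm{det}\,\gamma)$. This is well-defined because the cokernel of $\gamma$ is a fiberwise torsion sheaf of length $1$ supported at $x$, so the cokernel of $\mathrm{det}\,\gamma$ is of the same type. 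Writing $\overleftarrow{h}_1, \overrightarrow{h}_1, \alpha_1$ for the structural maps of $\Mod^1_1$, the construction yields the compatibilities
\[
\overleftarrow{h}_1\circ \delta = \mathrm{det}\circ \overleftarrow{h},\quad \overrightarrow{h}_1\circ \delta = \mathrm{det}\circ \overrightarrow{h},\quad \alpha_1\circ \delta = \alpha.
\]

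The second step is to rewrite the Hecke eigensheaf property of $\mathcal{G}$ as a pullback identity on $\Mod^1_1$. For $n=1$ the normalization $(\frac{n-1}{2})[n-1]$ is trivial, and the map $(\overleftarrow{h}_1, \alpha_1)\colon \Mod^1_1\to \Bun_1\times \Div^1$ is an equivalence, so the Hecke eigensheaf condition $T_{V_{\mathrm{std}}}(\mathcal{G})\cong \mathcal{G}\boxtimes \mathbb{L}_\chi$ unwinds to an isomorphism
\[
\overleftarrow{h}_1^*\mathcal{G}\cong \overrightarrow{h}_1^*\mathcal{G}\otimes_\Lambda \alpha_1^*\mathbb{L}_\chi^\vee
\]
on $\Mod^1_1$. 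Pulling back along $\delta$ and using the compatibilities of step one yields on $\Mod^1_n$:
\[
\overleftarrow{h}^*\mathrm{det}^*\mathcal{G} \cong \overrightarrow{h}^*\mathrm{det}^*\mathcal{G}\otimes_\Lambda \alpha^*\mathbb{L}_\chi^\vee.
\]

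Finally, substituting this identity into the definition of $\Av_{\mathbb{L}}$ and applying the projection formula for $\overrightarrow{h}_!$ to pull $\overrightarrow{h}^*\mathrm{det}^*\mathcal{G}$ out of the pushforward gives
\[
\Av_{\mathbb{L}}(\mathcal{F}\otimes_\Lambda \mathrm{det}^*\mathcal{G}) \cong \overrightarrow{h}_!\bigl(\overleftarrow{h}^*\mathcal{F}\otimes_\Lambda \alpha^*(\mathbb{L}\otimes_\Lambda \mathbb{L}_\chi^\vee)(\tfrac{n-1}{2})[n-1]\bigr)\otimes_\Lambda \mathrm{det}^*\mathcal{G} = \Av_{\mathbb{L}\otimes_\Lambda \mathbb{L}_\chi^\vee}(\mathcal{F})\otimes_\Lambda \mathrm{det}^*\mathcal{G},
\]
as required. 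The main obstacle is step two: correctly translating the (abstractly formulated) Hecke eigensheaf property for $\mathcal{G}$ on $\Bun_1$ into the explicit pullback identity on $\Mod^1_1$ with the right direction of the $\mathbb{L}_\chi$-twist, which requires matching the convention relating $T_{V_{\mathrm{std}},1}$ to the correspondence $(\overleftarrow{h}_1, \overrightarrow{h}_1, \alpha_1)$.
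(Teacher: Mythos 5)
Your approach is essentially the same as the paper's. You construct the determinant map $\delta\colon \Mod^1_n\to \Mod^1_1$ (in the paper this is the second component of the map $\tilde{f}\colon \Mod^1_n\to \Mod^1_n\times \Mod^1_1$), translate the Hecke eigensheaf property of $\mathcal{G}$ into a pullback identity along the two legs of the rank-$1$ correspondence, pull it back to $\Mod^1_n$, and finish with the projection formula. The paper packages the last step via a cartesian diagram of correspondences and proper base change, working through $T_{V_{\mathrm{std}}}$ before recovering $\Av_{\mathbb{L}}$ by $W_E$-cohomology; your version substitutes the pullback identity directly into the defining formula $\Av_{\mathbb{L}}(\cdot)=\overrightarrow{h}_!(\overleftarrow{h}^\ast(\cdot)\otimes\alpha^\ast\mathbb{L}(\cdot))$ and is if anything slightly more direct. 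Both are the same idea.

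One caution, which you yourself flag: in step two you choose to invert the map $(\overleftarrow{h}_1,\alpha_1)$, whereas the paper's stated convention $T_{V_{\mathrm{std}},n}(\mathcal{F})=\overrightarrow{h}_!\bigl(\overleftarrow{h}^\ast(\mathcal{F})(\tfrac{n-1}{2})[n-1]\bigr)$ would make $(\overrightarrow{h}_1,\alpha_1)$ the natural map to invert; inverting $(\overrightarrow{h}_1,\alpha_1)$ instead yields $\overleftarrow{h}_1^\ast\mathcal{G}\cong \overrightarrow{h}_1^\ast\mathcal{G}\otimes\alpha_1^\ast\mathbb{L}_\chi$ (no dual), which would produce $\Av_{\mathbb{L}\otimes\mathbb{L}_\chi}$ in the conclusion. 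You should check this sign against the Fargues--Scholze orientation of the Hecke correspondence rather than against the internal formulas of this paper (whose intermediate formula ``$T_{V_{\mathrm{std}},n}(\mathcal{F})\boxtimes\mathbb{L}_\chi^\vee\boxtimes\mathcal{G}$'' carries the same ambiguity, and whose final displayed formula $p_{2,!}(\mathbb{L}\otimes T_{V_{\mathrm{std}},n}(\cdot)(1-n)[1])$ contains what look like typos in the projection and normalization). Since both you and the paper land on $\mathbb{L}_\chi^\vee$, this is a matter of a uniform convention to be verified once, not a gap in the structure of the argument.
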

\begin{proof}
  Consider the map
  \[
    f\colon \Bun_n\to \Bun_n\times \Bun_1,\ \mathcal{E}\mapsto (\mathcal{E},\mathrm{det}(\mathcal{E})).
  \]
  Then
  \[
    \mathcal{F}\otimes_\Lambda \mathrm{det}^\ast \mathcal{G}=f^\ast(\mathcal{F}\boxtimes \mathcal{G}).
  \]
   For $r\geq 0$ let
    \[
      \mathrm{Mod}_r^d
    \]
    be the small $v$-stack parametrizing injections $\mathcal{E}\to \mathcal{E}^\prime$ of vector bundles of rank $r$ with cokernel of length $d$ supported at a point of $X$, and let
    \[
      \tilde{f}\colon \mathrm{Mod}_n^1\to \mathrm{Mod}_n^1\times \mathrm{Mod}_1^1
    \]
    be induced by the identity in the first component, and by taking the determinant in the second.
    In the diagram
    \[
      \xymatrix{
        \Bun_n \ar[d]^f & \mathrm{Mod}^1_n\ar[l]_{\overleftarrow{h}}\ar[r]^-{\overrightarrow{h}}\ar[d]^{\tilde{f}} & \Bun_n\times \mathrm{Div}^1\ar[d]^{f\times \Delta} \\
        \Bun_n \times \Bun_1 & \mathrm{Mod}^1_n\times \mathrm{Mod}^1_1 \ar[l]_-{\overleftarrow{k}}\ar[r]^-{\overrightarrow{k}} & \Bun_n\times \Bun_1\times \mathrm{Div}^1\times \mathrm{Div}^1 
        }
      \]
      both squares are cartesian\footnote{Note that in this proof $\overrightarrow{h}$ has target $\Bun_n\times \Div^1$, not $\Bun_n$ as in previous occasions. We hope this does not create ever lasting confusion.}. Indeed, given a modification $\mathcal{E}\hookrightarrow \mathcal{E}^\prime\in \Mod^1_n$ with cokernel of length $1$ supported at a point $x\in X$, and a modification $\mathcal{L}\hookrightarrow \mathrm{det}(\mathcal{E}^\prime)$ of length $1$ supported on the same point, then there exists a unique isomorphism
      \[
        \mathcal{L}\cong \mathrm{det}(\mathcal{E})
      \]
      compatible with the morphisms to $\mathrm{det}(\mathcal{E}^\prime)$ as both are canonically isomorphic to $\mathrm{det}(\mathcal{E}^\prime)(-x)$. The argument for the left square is similar (however, we only need that the right square is cartesian).
      Let $V_{\mathrm{std},r}$ be the standard representation of $\GL_{r,\Lambda}$.
      Recall that
      \[
        T_{V_{\mathrm{std},n}}(\mathcal{F})=\overrightarrow{h}_!(\overleftarrow{h}^\ast(\mathcal{F})((n-1)/2)[n-1]),
      \]
      i.e., $T_{V_{\mathrm{std},n}}$ is defined as the correspondence given by the top horizontal line (with kernel $\Lambda((n-1)/2)[n-1]$).
      Using the proven cartesianess, proper base change and the projection formula we can deduce that
      \[
        \begin{matrix}
          & T_{V_{\mathrm{std},n}}(\mathcal{F}\otimes \mathrm{det}^\ast \mathcal{G}) \\
          \cong & T_{V_{\mathrm{std},n}}(f^\ast(\mathcal{F}\boxtimes \mathcal{G})) \\
          \cong & (f\times \Delta)^\ast(\overrightarrow{k}_!(\overleftarrow{k}^\ast(\mathcal{F}\boxtimes \mathcal{G})((n-1)/2)[n-1]) \\
          \cong & (f\times \Delta)^\ast(T_{V_{\mathrm{std},n}}(\mathcal{F})\boxtimes T_{V_{\mathrm{std},1}}(\mathcal{G})) \\
          \end{matrix}
        \]
        as $\mathcal{G}$ is a Hecke eigensheaf with eigenvalue $\mathbb{L}_{\chi}$ the last object can be rewritten as
        \[
          (f\times \Delta)^\ast(T_{V_{\mathrm{std},n}}(\mathcal{F})\boxtimes \mathbb{L}_{\chi}^\vee \boxtimes \mathcal{G}).
        \]
        All this isomorphisms are $W_{E}$-equivariant if $W_E$ acts diagonally on the exterior powers. Let $p_2\colon \Bun_n \times \mathrm{Div}^1 \to \Div^1$ be the second projection. From here we can conclude  
        \[
        \begin{matrix}
                      \mathrm{Av}_{\mathbb{L}}(\mathcal{F}\otimes \mathrm{det}^\ast \mathcal{G}) & \cong &   p_{2,!} (\mathbb{L} \otimes T_{V_{\mathrm{std},n}}(\mathcal{F}\otimes \mathrm{det}^\ast \mathcal{G})(1-n)[1]) \\
                     & \cong  & p_{2,!}(\mathbb{L}\otimes_{\Lambda} \mathbb{L}_{\chi}^\vee \otimes_\Lambda T_{V_{\mathrm{std},n}}(\mathcal{F})(1-n)[1])\otimes _{\Lambda} \mathrm{det}^\ast \mathcal{G} \\
       &   \cong  & \mathrm{Av}_{\mathbb{L}\otimes_\Lambda \mathbb{L}_\chi^\vee}(\mathcal{F})\otimes_\Lambda \mathrm{det}^\ast (\mathcal{G}),
       \end{matrix}              
        \]
        as desired.
\end{proof}

\subsection{The averaging functors of rank $1$ local systems}
\label{sec:aver-funct-triv}

  The categorical version of Fargues' conjecture suggests that if $\mathbb{L}$ is a $\Lambda$-local system of rank $<n$, then $\mathrm{Av}_{\mathbb{L}}$ sends each object in
  \[
    D_{\et,\mathrm{cusp}}(\Bun_n,\Lambda)
  \]
  to zero. We want to prove this in the case $\mathbb{L}$ has rank $1$, and $n=2$.

  \begin{theorem}
    \label{sec:aver-funct-triv-1-av-functor-vanishes-for-n-2-e-trivial}
    Let $\mathbb{L}_\chi\colon W_E\to \Lambda^\times$ be a character, seen as a local system $\mathbb{L}_\chi$ in $\Div^1$.
   For each $\mathcal{F}\in D_{\et,\mathrm{cusp}}(\Bun_2,\Lambda)$,
    \[
      \mathrm{Av}_{\mathbb{L}_\chi}(\mathcal{F})=0
    \]
  \end{theorem}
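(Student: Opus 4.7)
\emph{Proof plan.} The argument proceeds in two main stages: a reduction to the trivial character case, followed by an explicit computation on the semistable locus.

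By local class field theory, $\chi$ arises from a character of $E^\times$ and thus gives rise to a rank-one Hecke eigensheaf $\mathcal{G}\in D_\et(\Bun_1,\Lambda)$ with eigenvalue $\mathbb{L}_\chi$. Applying \Cref{sec:aver-funct-twist-1-av-functors-and-twisting} with $\mathbb{L}=\mathbb{L}_\chi$ yields
\[
\mathrm{Av}_{\mathbb{L}_\chi}(\mathcal{F})\otimes_\Lambda \mathrm{det}^\ast\mathcal{G} \cong \mathrm{Av}_\Lambda\bigl(\mathcal{F}\otimes_\Lambda \mathrm{det}^\ast\mathcal{G}\bigr),
\]
and since twisting by $\mathrm{det}^\ast\mathcal{G}$ is an autoequivalence of $D_\et(\Bun_2,\Lambda)$ preserving the cuspidal subcategory by \Cref{sec:const-terms-twist-1-constant-terms-and-twisting}, it suffices to prove $\mathrm{Av}_\Lambda(\mathcal{F})=0$ for every cuspidal $\mathcal{F}$.

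By \Cref{sec:averaging-functors-3-averaging-functors-cuspidal-subcategory}, $\mathrm{Av}_\Lambda(\mathcal{F})$ is itself cuspidal, hence by \Cref{sec:v_m-1-implies-cuspidal-implies-support-on-non-semistable-locus} supported on $\Bun_2^{\rm ss}$. It therefore suffices to show $j_{b'}^\ast\mathrm{Av}_\Lambda(\mathcal{F})=0$ for each basic $b'\in B(\GL_2)$. Proper base change identifies this with a twisted, shifted $!$-pushforward of $\overleftarrow{h}^\ast\mathcal{F}$ along the fiber of $\overrightarrow{h}$ over $\mathcal{E}_{b'}$, which is a $\mathbb{P}^1$-bundle over $\Div^1$ parametrizing length-one modifications $\mathcal{E}\hookrightarrow\mathcal{E}_{b'}$. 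Because $\mathcal{F}$ is supported on semistable objects, only the open substack $Z$ on which $\mathcal{E}$ is itself semistable contributes; on $Z$ the source has the unique basic type $b$ with $\kappa(b)=\kappa(b')-1$, and $\pi_b:=\mathcal{F}|_{\Bun_2^b}$ is, by \Cref{sec:geom-cusp-repr-geom-cusp-representations-for-n-2}, a complex of supercuspidal $G_b(E)$-representations.

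The desired vanishing then becomes a group-cohomological identity
\[
R\Gamma_\natural\bigl(G_b(E),\, R\Gamma_c(Z,\Lambda)\otimes_\Lambda \pi_b\bigr)=0,
\]
strictly analogous to the final step of \Cref{sec:geom-cusp-repr-geom-cusp-representations-for-n-2}. The plan is to exhibit a $G_b(E)$-equivariant filtration of $R\Gamma_c(Z,\Lambda)$ whose graded pieces are characters of $G_b(E)$ factoring through the reduced norm $\mathrm{Nrd}\colon G_b(E)\to E^\times$, and then invoke $R\mathrm{Hom}_{G_b(E)}(\pi_b,\chi)=0$ for every such character $\chi$ (which holds by supercuspidality of $\pi_b$) to conclude. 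The main obstacle is the explicit identification of $Z$ as a Banach--Colmez-type fibration over $\Div^1$ and the construction of this filtration of $R\Gamma_c(Z,\Lambda)$; this is parallel to the analysis of the spaces $Z_m^\circ$ in the cited example, and requires distinguishing the parity of $\kappa(b')$, corresponding to whether $\mathcal{E}_{b'}$ is split semistable or stable.
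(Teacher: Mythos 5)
Your reduction steps faithfully reproduce the opening of the paper's proof: by \Cref{sec:aver-funct-twist-1-av-functors-and-twisting} and \Cref{sec:const-terms-twist-1-constant-terms-and-twisting} reduce to $\mathbb{L}_\chi=\Lambda$, observe via \Cref{sec:averaging-functors-3-averaging-functors-cuspidal-subcategory} and \Cref{sec:v_m-1-implies-cuspidal-implies-support-on-non-semistable-locus} that $\mathrm{Av}_\Lambda(\mathcal{F})$ is cuspidal hence concentrated on $\Bun_2^{\rm ss}$, and then compute the $!$-stalk at each basic $c$ by proper base change. You are also right that cuspidality of $\mathcal{F}$ kills the contribution from the non-semistable strata of the fiber of $\overrightarrow{h}$, leaving only the open locus where the source $\mathcal{E}$ is $\cong\mathcal{E}_b$. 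All of this is as in the paper.

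The gap is in the final step, where you propose to exhibit a $G_b(E)$-equivariant filtration of $R\Gamma_c$ of the locus of \emph{injective} modifications $\mathcal{E}_b\hookrightarrow\mathcal{E}_c$ whose graded pieces are characters factoring through $\mathrm{Nrd}$, and then quote $R\mathrm{Hom}_{G_b(E)}(\pi_b,\chi)=0$. This cannot work as stated. The injective locus is an open substack of the Banach--Colmez space $\mathrm{Hom}(\mathcal{E}_b,\mathcal{E}_c)$, and when one computes its compactly supported cohomology by excision against the whole $\mathrm{Hom}$-space, the complementary strata of non-injective maps appear. For $n=2$ the stratum $r=1$ (maps with rank-$1$ image $\mathcal{L}$) involves the space $\mathrm{Hom}^{\rm surj}(\mathcal{E}_b,\mathcal{L})$, whose compactly supported cohomology as a $G_b(E)$-module is, at the bottom slope, something like $C_c^\infty(E^2\setminus 0,\Lambda)$ -- an infinite-dimensional smooth representation, not a character. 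Its contribution is killed not by $R\mathrm{Hom}(\pi_b,\chi)=0$ but by the vanishing of the constant term functor $\mathrm{CT}_{P,!}(\mathcal{F})=0$, which is a strictly stronger input (geometric cuspidality, i.e.\ supercuspidality of $\pi_b$, not mere incompatibility with characters). The paper (following Gaitsgory) makes this structure explicit: it enlarges $\Mod^1_n$ to the stack $Z$ of \emph{all} maps $\mathcal{E}_b\to\mathcal{E}_c$, getting a positive Banach--Colmez space whose $R\Gamma_c$ is $1$-dimensional (a character), stratifies $Z$ by the rank $r$ and type $d$ of the image, and proves that each non-injective stratum ($r=0,\ldots,n-1$) contributes zero at $c$: the $r=0$ stratum and the full space by $R\mathrm{Hom}(\pi_b,\chi)=0$, the intermediate $r=1,\ldots,n-1$ strata by $\mathrm{CT}_{P,!}(\mathcal{F})=0$. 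The injective stratum $r=n$ (which is what you want) then contributes zero by exhaustion. Your plan as written collapses the $r=1$ case into the character argument, which is where it breaks.
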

  If $n=1$, then each object is cuspidal for our definition (as there are no proper parabolics) and the statement is wrong.
    The proof follows \cite[Theorem A.4]{gaitsgory_on_a_vanishing_conjecture}.
    \begin{proof}
      Most of the proof works for any $n\geq 2$, so we write it in this generality, only specifying $n=2$ when needed (although we expect the same argument to extend to all $n\geq 2$ with more work). By \Cref{sec:aver-funct-twist-1-av-functors-and-twisting} and \Cref{sec:const-terms-twist-1-constant-terms-and-twisting} we can assume that $\mathbb{L}_\chi\cong \Lambda$ is trivial.  Set
      \[
        \mathrm{Av}:=\mathrm{Av}_{\Lambda}.
      \]
    Recall that each cuspidal $\mathcal{F}$ is automatically supported on the semistable locus of $\Bun_n$, cf.\ \Cref{sec:v_m-1-implies-cuspidal-implies-support-on-non-semistable-locus}, and that $\mathrm{Av}(\mathcal{F})\in D_{\et,\mathrm{cusp}}(\Bun_n,\Lambda)$ is again cuspidal, cf.\ \Cref{sec:averaging-functors-3-averaging-functors-cuspidal-subcategory}. Thus it suffices to prove that for each $b,c\in B(G)$ basic with $\kappa(c)=\kappa(b)+1$, and each $\mathcal{F}\in D_{\et,\mathrm{cusp}}(\Bun_n,\Lambda)$ which is supported on $\Bun_n^b$, the stalk
    \[
      j_c^\ast\mathrm{Av}(\mathcal{F})
    \]
    at $c$ of
    \[
      \mathrm{Av}(\mathcal{F})
    \]
    vanishes.
    To prove this, let
    \[
      Z
    \]
    be the small $v$-stack over $k$ parametrizing morphisms
    \[
      \mathcal{E}\to \mathcal{E}^\prime
    \]
    with $\mathcal{E}$ locally isomorphic to $\mathcal{E}_b$ and $\mathcal{E}^\prime$ locally isomorphic to $c$.
    For each $r=0,\ldots, n$ and each $d\in B(\GL_r)$ let
    \[
      Z_{r,d}
    \]
    be the small $v$-stack parametrizing factorizations into a surjection followed by an injection
    \[
      \mathcal{E}\twoheadrightarrow \mathcal{F}\hookrightarrow \mathcal{E}^\prime
    \]
    with $\mathcal{E}, \mathcal{E}^\prime$ locally isomorphic to $\mathcal{E}_b, \mathcal{E}_c$ respectively, and $\mathcal{F}$ locally isomorphic to $\mathcal{E}_d$. The morphism
    \[
      (\mathcal{E}\twoheadrightarrow \mathcal{F}\hookrightarrow \mathcal{E}^\prime)\mapsto (\mathcal{E}\to \mathcal{E}^\prime)
    \]
    defines an injection $j_{r,d}\colon Z_{r,d}\to Z$, which is a locally closed embedding. Moreover,
    \[
      \coprod\limits_{r,d} |Z_{r,d}|=|Z|
    \]
    as each geometric point of $Z$ factors over a unique $Z_{r,d}$.
    The space $Z$ defines a correspondence
    \[
      \xymatrix{
        & Z\ar[ld]_{q_1}\ar[rd]^{q_2}& \\
        \Bun_n & & \Bun_n 
      }      
    \]
    and thus for each $r,d$ an endofunctor
    \[
      \mathrm{Av}_{r,d}\colon D_\et(\Bun_n,\Lambda)\to D_\et(\Bun_n,\Lambda),\ \mathcal{G}\mapsto q_{2,!}(q_1^{\ast}\mathcal{G}\otimes^{\mathbb{L}}_\Lambda j_{r,d,!}(\Lambda))
    \]
    as well as the endofunctor
    \[
      \overline{\mathrm{Av}}\colon D_\et(\Bun_n,\Lambda)\to D_\et(\Bun_n,\Lambda),\ \mathcal{G}\mapsto q_{2,!}(q_1^{\ast}\mathcal{G})).
    \]
    Note that (up to a shift and twist) $\mathrm{Av}_{n,b}=\mathrm{Av}$, and $Z_{n,d}$ is empty for $d\neq b$.
    We claim that for $\mathcal{F}\in D_{\et,\mathrm{cusp}}(\Bun_n,\Lambda)$ each of the objects
    \[
      \overline{\mathrm{Av}}(\mathcal{F}),\quad \mathrm{Av}_{r,d}(\mathcal{F}),
    \]
    where $r= 0,\ldots, n-1$ and $d$ is arbitrary, vanish at $c$. As the cone of the natural morphism
    \[
      \mathrm{Av}(\mathcal{F})\to \overline{\mathrm{Av}}(\mathcal{F})
    \]
    has a filtration by the $\mathrm{Av}_{r,d}(\mathcal{F})$ for $r=0,\ldots,n-1$, this implies the desired vanishing of
    \[
      \mathrm{Av}_{n,b}(\mathcal{F})=\mathrm{Av}(\mathcal{F}).
    \]
    For $\overline{\mathrm{Av}}$ note that the base change along $k \to \Bun_n^c$ of the projection $Z\to \Bun_n^c$ 
    \[
      Z\times_{\Bun_n^c}k\cong [G_b({E})\setminus \mathrm{Hom}(\mathcal{E}_b, \mathcal{E}_c)]
    \]
    where $\Hom(\mathcal{E}_b,\mathcal{E}_c)$ is a \textit{positive} Banach-Colmez space (by our assumption that $b,c$ are basic with $\kappa(c)>\kappa(b)$).
    As the cohomology of positive Banach-Colmez spaces is $1$-dimensional and concentrated in a single degree the compactly supported cohomology of
    \[
      Z
    \]
    with coefficients in the pullback of a (geometrically) cuspidal object in $D_\et(\Bun_n^b,\Lambda)\cong D_\et([\ast/G_b({E})],\Lambda)$ vanishes for $n=2$: when $\kappa(b)$ is even, cf. the discussion of \Cref{sec:geom-cusp-repr-geom-cusp-representations-for-n-2}; when $\kappa(b)$ is odd, $G_b(E)$ is compact modulo center, so the claim is easy.
    Next assume $r=0$, then
    \[
      Z_{r,d}\times_{\Bun_n,c}k=[\ast/G_b({E})]
    \]
    and the claim follows as geometrically cuspidal objects don't have cohomology, exactly as above. For $r=1,\ldots, n-1$ and $d\in B(\GL_r)$ we can write
    \[
      Z_{r,d}\times_{\Bun_n^c}k=[G_b({E})\setminus \mathrm{Hom}^{\mathrm{surj}}(\mathcal{E}_b,\mathcal{E}_d)\times^{\mathcal{G}_b} \mathrm{Hom}^{\mathrm{inj}}(\mathcal{E}_d, \mathcal{E}_c)]
    \]
    with $\mathcal{G}_b$ acting diagonally on $\mathrm{Hom}^{\mathrm{surj}}(\mathcal{E}_b,\mathcal{E}_d)\times \mathrm{Hom}^{\mathrm{inj}}(\mathcal{E}_d, \mathcal{E}_c)$.
    Let $\mathcal{F}\in D_{\mathrm{cusp}}(\Bun_n,\Lambda)$ with support at $b$.
    The compactly supported cohomology 
    \[
      R\Gamma_c([G_b({E})\setminus \mathrm{Hom}^{\mathrm{surj}}(\mathcal{E}_b,\mathcal{E}_d)],\mathcal{F})
    \]
    vanishes because it identifies with the fiber of $0=Rf_!(\mathrm{CT}_{P,!}(\mathcal{F}))$ at $\mathcal{E}_d$, where $f$ is the morphism
    \[
      \Bun_M\xrightarrow{f} \Bun_r,
    \]
    and $P\subseteq G$ is the standard parabolic with Levi $M\cong \GL_r\times \GL_{n-r}$.
    Using the projection
    \[
      [G_b({E})\setminus \mathrm{Hom}^{\mathrm{surj}}(\mathcal{E}_b,\mathcal{E}_d)\times^{\mathcal{G}_b} \mathrm{Hom}^{\mathrm{inj}}(\mathcal{E}_d, \mathcal{E}_c)]\to [\mathcal{G}_b\setminus \mathrm{Hom}^{\mathrm{inj}}(\mathcal{E}_d,\mathcal{E}_d)]
    \]
    with fibers isomorphic to $[G_b({E})\setminus \mathrm{Hom}^{\mathrm{surj}}(\mathcal{E}_b,\mathcal{E}_d)]$ this implies the desired vanishing of the stalk of
    \[
      \mathrm{Av}_{r,d}(\mathcal{F})
    \]
    at $c$, i.e., of the compactly supported cohomology of $Z_{r,d}\times_{\Bun_n^c}k$ with coefficients in $\mathcal{F}$.
  \end{proof}

\subsection{Averaging functors via the spectral action}
\label{sec:aver-funct-via}

Recall the stack $X_{\hat{G}}$ of $n$-dimensional continuous $\Lambda$-representations of $W_E$, with the natural projection
\[
  f\colon X_{\hat{G}}\to [\mathrm{Spec}(\Lambda)/\hat{G}],
\]
introduced in \Cref{sec:work-of-fargues-scholze}. If $V \in \mathrm{Rep}_{\hat{G}}$, we also denote by $V$ the associated vector bundle on $[\mathrm{Spec}(\Lambda)/\hat{G}]$.

For a $W_E$-representation $\mathbb{L}$ on a finite projective $\Lambda$-module, we set
\[
 \mathcal{A}v_{\mathbb{L}}:=R\Gamma(W_E,\mathbb{L}\otimes_\Lambda f^\ast V_{\mathrm{std}})\in \mathrm{Perf}(X_{\hat{G}}). 
\]

Note that $\mathcal{A}v_{\mathbb{L}}$ is indeed a perfect complex. Indeed, the wild inertia $P_E\subseteq W_E$ acts via a finite quotient, and has no higher cohomology. As $p\in \Lambda^\times$, this implies that the invariants
\[
 (f^\ast V_{\mathrm{std}})^{P_E} 
\]
are a direct summand of $f^\ast V_{\mathrm{std}}$ and hence a vector bundle.
After fixing a generator $\tau \in I_E/P_E$ of the tame quotient of the inertia subgroup $I_E\subseteq W_E$, and a Frobenius lift $\sigma\in W_{E}/P_E$ the complex $\mathcal{A}v_{\mathbb{L}}$ is calculated as the homotopy limit of the diagram 
\begin{equation}
  \label{equation-diagram-for-w-e-invariants}
  \xymatrix{
 (\mathbb{L}\otimes_{\Lambda}f^\ast V_{\mathrm{std}})^{P_E} \ar[r]^{\tau-1}\ar[d]^{\sigma-1} & (\mathbb{L}\otimes_{\Lambda}f^\ast V_{\mathrm{std}})^{P_E}\ar[d]^{\sigma(1+\tau+\ldots+\tau^{q-1})-1} \\
    (\mathbb{L}\otimes_{\Lambda}f^\ast V_{\mathrm{std}})^{P_E} \ar[r]^{\tau-1} & (\mathbb{L}\otimes_{\Lambda}f^\ast V_{\mathrm{std}})^{P_E},   
}
\end{equation}
where $q$ is the cardinality of the residue field of $E$. This presentation implies that $\mathcal{A}v_{\mathbb{L}}$ is a perfect complex on $X_{\hat{G}}$ as claimed.

\begin{lemma}
  \label{sec:aver-funct-via-1-averaging-functor-via-spectral-action}
  There exists a canonical isomorphism
  \[
   \mathrm{Av}_{\mathbb{L}}\cong \mathcal{A}v_{\mathbb{L}}\ast (-)
  \]
  of functors
  \[
    D_{\et}(\Bun_n,\Lambda)\to D_{\et}(\Bun_n,\Lambda).
  \]
\end{lemma}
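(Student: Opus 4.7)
The plan is to factor the averaging functor through the Hecke operator $T_{V_{\mathrm{std}}}$, reinterpret the extra $\mathbb{L}$-twist as a continuous $W_E$-cohomology operation, and then commute this cohomology with the spectral action to land on $\mathcal{A}v_{\mathbb{L}}\ast(-)$. First, factor $\overrightarrow{h} = \pi_{\Bun}\circ m$ where $m := (\overrightarrow{h},\alpha)\colon \mathrm{Mod}^1_n \to \Bun_n\times \Div^1$, and let $\pi_{\Div}\colon \Bun_n\times \Div^1\to \Div^1$ be the second projection. The projection formula combined with the definition of $T_{V_{\mathrm{std}}}$ recalled in Section~\ref{sec:work-of-fargues-scholze} immediately gives
\[
\mathrm{Av}_{\mathbb{L}}(\mathcal{F}) \;\cong\; \pi_{\Bun,!}\bigl(T_{V_{\mathrm{std}}}(\mathcal{F})\otimes_\Lambda \pi_{\Div}^\ast\mathbb{L}\bigr),
\]
the Tate twist and shift $(\tfrac{n-1}{2})[n-1]$ in the definition of $\mathrm{Av}_{\mathbb{L}}$ being absorbed into the normalization of $T_{V_{\mathrm{std}}}$.

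Next, I would exploit that $T_{V_{\mathrm{std}}}(\mathcal{F})$, viewed in $D_\bs(\Bun_n\times \Div^1,\Lambda)$, is pulled back from $\Bun_n\times [\ast/W_E]$ along the map $\Div^1\to [\ast/W_E]$; the same is true of $\pi_{\Div}^\ast\mathbb{L}$. Using the invariance $D_\lis(\Bun_n,\Lambda)\cong D_\lis(\Bun_n\times \Spd(C),\Lambda)$ (Section~\ref{sec:work-of-fargues-scholze}) together with a direct analysis of $!$-pushforward along $\Div^1\to \ast$ for objects pulled back from $[\ast/W_E]$, which recovers continuous $W_E$-cohomology, I would rewrite
\[
\mathrm{Av}_{\mathbb{L}}(\mathcal{F}) \;\cong\; R\Gamma\bigl(W_E,\; \mathbb{L}\otimes_\Lambda T_{V_{\mathrm{std}}}(\mathcal{F})\bigr),
\]
where on the right-hand side one uses the canonical $W_E$-equivariant structure on $T_{V_{\mathrm{std}}}(\mathcal{F})$ coming from the Hecke action.

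Finally, the remark at the end of Section~\ref{sec:work-of-fargues-scholze} provides a $W_E$-equivariant isomorphism $T_{V_{\mathrm{std}}}(\mathcal{F})\cong f^\ast V_{\mathrm{std}}\ast \mathcal{F}$. Substituting gives
\[
R\Gamma\bigl(W_E,\; \mathbb{L}\otimes_\Lambda T_{V_{\mathrm{std}}}(\mathcal{F})\bigr) \;\cong\; R\Gamma\bigl(W_E,\; (\mathbb{L}\otimes_\Lambda f^\ast V_{\mathrm{std}})\ast \mathcal{F}\bigr).
\]
Since the spectral action is $\Lambda$-linear and exact, it commutes with the finite homotopy limit~\eqref{equation-diagram-for-w-e-invariants} computing $R\Gamma(W_E,-)$ on perfect complexes, so the right-hand side is canonically isomorphic to $R\Gamma(W_E,\mathbb{L}\otimes_\Lambda f^\ast V_{\mathrm{std}})\ast \mathcal{F} = \mathcal{A}v_{\mathbb{L}}\ast \mathcal{F}$, which is what we wanted.

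The main obstacle will be the second step: pinning down the identification of $\pi_{\Bun,!}$ applied to pulled-back lisse sheaves on $\Bun_n\times \Div^1$ with $R\Gamma(W_E,-)$ applied to the corresponding $W_E$-equivariant lisse sheaf on $\Bun_n$, with all normalizations matching those built into $T_{V_{\mathrm{std}}}$ and $\mathrm{Av}_{\mathbb{L}}$. Once this foundational compatibility is in hand, the remaining steps are formal consequences of the $W_E$-equivariant realization of the Hecke operator via the spectral action and of the fact that the spectral action preserves the finite limit presentation~\eqref{equation-diagram-for-w-e-invariants} of $W_E$-cohomology on perfect complexes.
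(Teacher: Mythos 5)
Your proposal is correct and takes essentially the same approach as the paper: both proofs rest on identifying $\mathrm{Av}_{\mathbb{L}}(\mathcal{G})$ with $R\Gamma(W_E,\mathbb{L}\otimes_\Lambda T_{V_{\mathrm{std}}}(\mathcal{G}))$ and then commuting the spectral action past the finite homotopy limit of~\eqref{equation-diagram-for-w-e-invariants} to obtain $\mathcal{A}v_{\mathbb{L}}\ast\mathcal{G}$. The paper simply asserts that the termwise-applied diagram has homotopy limit $\mathrm{Av}_{\mathbb{L}}(\mathcal{G})$ without elaboration; your steps~1--2 (factoring $\overrightarrow{h}$ through $\Bun_n\times\Div^1$, projection formula, and identification of the pushforward along $\Div^1\to\ast$ with $W_E$-cohomology) supply exactly the justification the paper leaves implicit.
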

\begin{proof}
  Note that the diagram (\Cref{equation-diagram-for-w-e-invariants}) is a diagram of perfect complexes in $X_{\hat{G}}$. Applying it to some object $\mathcal{G}\in D_{\et}(\Bun_n,\Lambda)$ we get the diagram 
\[
  \xymatrix{
   (\mathbb{L}\otimes_\Lambda T_{V_{\mathrm{std}}}(\mathcal{G}))^{P_E}  \ar[r]^{\tau-1}\ar[d]^{\sigma-1} & (\mathbb{L}\otimes_\Lambda T_{V_{\mathrm{std}}}(\mathcal{G}))^{P_E}\ar[d]^{\sigma(1+\tau+\ldots+\tau^{q-1})-1} \\
    (\mathbb{L}\otimes_\Lambda T_{V_{\mathrm{std}}}(\mathcal{G}))^{P_E} \ar[r]^{\tau-1} & (\mathbb{L}\otimes_\Lambda T_{V_{\mathrm{std}}}(\mathcal{G}))^{P_E}, 
  }
  \]
  whose homotopy limit calculates $\mathrm{Av}_{\mathbb{L}}$. But the spectral action 
  \[
    \mathrm{Perf}(X_{\hat{G}})\times D_{\et}(\Bun_n,\Lambda)\to D_{\et}(\Bun_n,\Lambda)
  \]
  commutes with finite homotopy limits in each variable. This implies the claim.
\end{proof}

The following immediate consequence of the existence of the spectral
action is the analog of the so-called vanishing conjecture of
\cite{frenkel_gaitsgory_vilonen}, cf. also \cite{gaitsgorygeneralized} for a generalized vanishing conjecture in the context of
$D$-modules.

For clarity let us add the subscript
\[
  \mathrm{Av}_{\mathbb{L},n},
\]
when we consider the averaging functor on $D_{\et}(\Bun_n,\Lambda)$ associated with $\mathbb{L}$. Note that $n$ need not be the rank of $\Lambda$.
\begin{proposition}
  \label{sec:aver-funct-via-1-vanishing-conjecture}
  Assume that $\Lambda$ is a field and $\mathbb{L}$ is an irreducible $W_E$-representation. Then
  \[
    \mathrm{Av}_{\mathbb{L},n}=0
  \]
  if $n<\mathrm{rk}(\mathbb{L})$.
\end{proposition}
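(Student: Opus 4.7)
The plan is to use Lemma~\ref{sec:aver-funct-via-1-averaging-functor-via-spectral-action}, which identifies $\mathrm{Av}_{\mathbb{L}, n}$ with the spectral action of the perfect complex $\mathcal{A}v_{\mathbb{L}} = R\Gamma(W_E, \mathbb{L} \otimes f^\ast V_{\mathrm{std}}) \in \mathrm{Perf}(X_{\hat{G}})$, to reduce the statement to showing that $\mathcal{A}v_{\mathbb{L}}$ vanishes as a perfect complex on $X_{\hat{G}}$.

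To prove this vanishing, I would pass to derived fibers: since $X_{\hat{G}}$ is an Artin stack and $\mathcal{A}v_{\mathbb{L}}$ is perfect, it suffices to verify that the derived fiber at each geometric point is zero. A geometric point of $X_{\hat{G}}$ is an $n$-dimensional continuous $W_E$-representation $V_\rho$; and since the $3$-term complex from~\eqref{equation-diagram-for-w-e-invariants} is finite, $R\Gamma(W_E, -)$ commutes with base change, so the fiber in question is $R\Gamma(W_E, \mathbb{L} \otimes V_\rho)$. By d\'evissage along a Jordan-H\"older filtration of $V_\rho$, I may further reduce to $V_\rho$ irreducible of dimension $m \leq n < d := \dim \mathbb{L}$. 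Then $H^0(W_E, \mathbb{L} \otimes V_\rho) = \mathrm{Hom}_{W_E}(V_\rho^\vee, \mathbb{L})$ vanishes by Schur's lemma, as $V_\rho^\vee$ and $\mathbb{L}$ are irreducibles of distinct dimensions. Analogously, local Tate duality for the Weil group yields $H^2(W_E, \mathbb{L} \otimes V_\rho) \cong \mathrm{Hom}_{W_E}(V_\rho, \mathbb{L}^\vee(1))^\vee$, which again vanishes by Schur. The Euler characteristic of $R\Gamma(W_E, -)$ on finite-dimensional representations is zero (manifest from the $3$-term complex, as the alternating sum of dimensions of $M, M \oplus M, M$ vanishes), whence $\dim H^1 = \dim H^0 + \dim H^2 = 0$, completing the fiberwise verification.

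The main technical obstacle is justifying the local Tate duality $H^i(W_E, V) \cong H^{2-i}(W_E, V^\vee(1))^\vee$ for the Weil group, which is classical for $\Gamma_E$ but less commonly cited for $W_E$. The cleanest approach is to derive this duality directly from the self-dual shape of the complex in~\eqref{equation-diagram-for-w-e-invariants}; alternatively, one may bypass the duality altogether by checking $H^2$-vanishing from the explicit cokernel in the $3$-term complex, using that $\mathbb{L} \otimes V_\rho$ has no one-dimensional subquotient (another application of Schur, since any such subquotient would provide a nonzero $W_E$-map between irreducibles of different dimensions).
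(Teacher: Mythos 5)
Your proposal is correct and follows essentially the same route as the paper: reduce via Lemma~\ref{sec:aver-funct-via-1-averaging-functor-via-spectral-action} to vanishing of the perfect complex $\mathcal{A}v_{\mathbb{L}}$, check at field-valued points, and argue $H^0=0$ directly, $H^2=0$ by duality, and $H^1=0$ via vanishing Euler characteristic. The only minor deviations are the unnecessary Jordan--H\"older d\'evissage (since $\mathbb{L}$ irreducible already forces $\mathrm{Hom}_{W_E}(V^\vee,\mathbb{L})=0$ for any $V$ of smaller dimension, as any nonzero map would have to be surjective) and the more cautious handling of duality for $W_E$-cohomology, both of which are harmless.
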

\begin{proof}
  By \Cref{sec:aver-funct-via-1-averaging-functor-via-spectral-action} it suffices to see that the object
  \[
    \mathcal{A}v_{\mathbb{L}}\in \mathcal{X}_{\hat{G}}
  \]
  vanishes. This can be checked at field valued points of $X_{\hat{G}}$, where it follows from the fact that by irreducibility of $\mathbb{L}$
  \[
    R\Gamma(W_E,\mathbb{L}\otimes_\Lambda V)=0
  \]
  for any representation $V$ of $W_E$ which is of rank $n<\textrm{rank } \mathbb{L}$. Indeed, vanishing in degree $0$ is clear, which implies vanishing in degree $2$ by duality, and then vanishing of the $H^1$ as the Euler characteristic of $W_E$ on finite dimensional $\Lambda$-representations is $0$. 
\end{proof}

This vanishing has the following interesting consequence.

\begin{corollary}
  \label{sec:aver-funct-via-2-hecke-eigensheaves-of-irred-autom-cuspidal}
  Assume that $\Lambda$ is a field.
  Let $\mathcal{F}\in D_\et(\Bun_n,\Lambda)$ be a Hecke eigensheaf
  with eigenvalue $\mathbb{L}$ an irreducible $W_{E}$-representation over $\Lambda$
  of rank $n$. Then
  \[
    \mathcal{F}\in D_{\et,\mathrm{cusp}}(\Bun_n,\Lambda),
  \]
  i.e., $\mathcal{F}$ is cuspidal.
\end{corollary}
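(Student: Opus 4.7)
The plan is to show $\CT_{P,!}(\mathcal{F})=0$ for every proper maximal standard parabolic $P\subseteq G$; fix one with Levi quotient $M=\GL_{n_1}\times \GL_{n_2}$ where $n_1,n_2\geq 1$ and $n_1+n_2=n$. First, I would apply \Cref{sec:averaging-functors-2-averaging-functors-and-constant-terms} with $\mathbb{L}^\vee$ in place of $\mathbb{L}$, producing the exact triangle
\[
(\Av_{\mathbb{L}^\vee(\frac{n-n_1}{2})[n-n_1],n_1}\times \mathrm{Id}_{\Bun_{n_2}})\circ \CT_{P,!} \to \CT_{P,!}\circ \Av_{\mathbb{L}^\vee,n}\to (\mathrm{Id}_{\Bun_{n_1}}\times \Av_{\mathbb{L}^\vee(\frac{n-n_2}{2})[n-n_2],n_2})\circ \CT_{P,!}.
\]
Since $\mathbb{L}^\vee$ is again irreducible of rank $n$ (Tate twists and shifts preserving irreducibility and non-vanishing of the averaging functor), and $n_1,n_2<n$, \Cref{sec:aver-funct-via-1-vanishing-conjecture} forces both outer terms to vanish. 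Hence $\CT_{P,!}\circ \Av_{\mathbb{L}^\vee,n}=0$; evaluating at $\mathcal{F}$ gives $\CT_{P,!}(\Av_{\mathbb{L}^\vee,n}(\mathcal{F}))=0$.

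Second, I would identify $\Av_{\mathbb{L}^\vee,n}(\mathcal{F})$ as a scalar multiple of $\mathcal{F}$ via the Hecke eigensheaf property. By \Cref{sec:aver-funct-via-1-averaging-functor-via-spectral-action}, $\Av_{\mathbb{L}^\vee,n}(\mathcal{F})\cong \mathcal{A}v_{\mathbb{L}^\vee}\ast \mathcal{F}$ with $\mathcal{A}v_{\mathbb{L}^\vee}=R\Gamma(W_E,\mathbb{L}^\vee\otimes_\Lambda f^\ast V_{\mathrm{std}})$. The spectral action is $\Lambda$-linear and commutes with the finite homotopy limit defining $R\Gamma(W_E,-)$ (as exploited in the proof of \Cref{sec:aver-funct-via-1-averaging-functor-via-spectral-action}); combined with $f^\ast V_{\mathrm{std}}\ast \mathcal{F}\cong T_{V_{\mathrm{std}}}(\mathcal{F})\cong \mathcal{F}\boxtimes \mathbb{L}$ (by the Hecke eigenproperty, compatibly with the $W_E$-actions) and the fact that $\mathcal{F}$ carries no $W_E$-action, this collapses to
\[
\Av_{\mathbb{L}^\vee,n}(\mathcal{F})\cong R\Gamma(W_E,\mathrm{End}(\mathbb{L}))\otimes_\Lambda \mathcal{F}.
\]

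Finally, set $C:=R\Gamma(W_E,\mathrm{End}(\mathbb{L}))\in D(\Lambda)$. Irreducibility of $\mathbb{L}$ ensures that $H^0(W_E,\mathrm{End}(\mathbb{L}))=\mathrm{End}_{W_E}(\mathbb{L})$ contains the identity endomorphism and is in particular nonzero, so $C\neq 0$. By $\Lambda$-linearity of $\CT_{P,!}$,
\[
0=\CT_{P,!}(\Av_{\mathbb{L}^\vee,n}(\mathcal{F}))\cong C\otimes_\Lambda \CT_{P,!}(\mathcal{F}),
\]
and over the field $\Lambda$ tensoring with a nonzero complex of vector spaces is a conservative operation (every object of $D(\Lambda)$ splits as the direct sum of its shifted cohomologies, and tensor of nonzero vector spaces is nonzero), forcing $\CT_{P,!}(\mathcal{F})=0$. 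As $P$ ranges over all proper maximal standard parabolics, this suffices for $\mathcal{F}$ to be cuspidal. The most delicate point is the identification in the middle step: tracking the $W_E$-equivariance through the interplay between the spectral action and the Hecke eigenproperty. Everything else is formal once the triangle of \Cref{sec:averaging-functors-2-averaging-functors-and-constant-terms} and the vanishing of \Cref{sec:aver-funct-via-1-vanishing-conjecture} are in hand.
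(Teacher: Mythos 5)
Your proof is correct and follows the paper's argument: combine the exact triangle of \Cref{sec:averaging-functors-2-averaging-functors-and-constant-terms} with the vanishing of \Cref{sec:aver-funct-via-1-vanishing-conjecture} to get $\CT_{P,!}\circ\Av_{\mathbb{L}^\vee,n}=0$, then use the Hecke eigensheaf property to identify $\Av_{\mathbb{L}^\vee,n}(\mathcal{F})\cong R\Gamma(W_E,\mathrm{End}(\mathbb{L}))\otimes_\Lambda\mathcal{F}$ and conclude from non-vanishing of $W_E$-invariants. One small remark: you correctly apply the triangle with $\mathbb{L}^\vee$ in place of $\mathbb{L}$, which is what produces $R\Gamma(W_E,\mathbb{L}^\vee\otimes\mathbb{L})$; the paper's proof writes $\Av_{\mathbb{L},n}$ but uses this same formula, so the notation there should be read as $\Av_{\mathbb{L}^\vee,n}$ as you have it.
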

\begin{proof}
  From \Cref{sec:aver-funct-via-1-vanishing-conjecture} and
  \Cref{sec:averaging-functors-2-averaging-functors-and-constant-terms}
  we can conclude that
  \[
    \CT_{P,!}\circ \Av_{\mathbb{L},n}(\mathcal{F})=0
  \]
  for any standard parabolic $P\subsetneq G$. As $\mathcal{F}$ is
  a Hecke eigensheaf with eigenvalue $\mathbb{L}$, we get that
  \[
    \mathrm{Av}_{\mathbb{L},n}(\mathcal{F})=R\Gamma(W_{E},\mathbb{L}^\vee{\otimes}_\Lambda
    \mathbb{L})\otimes_{\Lambda}^{L}\mathcal{F}.
  \]
  Thus, $\CT_{P,!}(\mathcal{F})=0$ as
  \[
    0=\CT_{P,!}({\Av_{\mathbb{L},n}}(\mathcal{F}))=R\Gamma(W_{E},\mathbb{L}^\vee{\otimes}_\Lambda
    \mathbb{L})\otimes_{\Lambda}^{L}\CT_{P,!}(\mathcal{F})
  \]
  and
  \[
    R\Gamma(W_{E},\mathbb{L}^\vee{\otimes}_\Lambda
    \mathbb{L})\neq 0.
  \]
  This finishes the proof.
\end{proof}

\subsection{Averaging functors on $D_{\lis}$}
\label{sec:aver-funct-d_lis-1}

We explain how to extend the definition of the averaging functors
\[
  \mathrm{Av}_{\mathbb{L}}\colon D_\et(\Bun_n,\Lambda)\to D_\et(\Bun_n,\Lambda)
\]
for $\mathbb{L}\in D_{\et}(\mathrm{Div}^1,\Lambda)$, to the case that the torsion ring $\Lambda$ is replaced by an arbitrary $\Z_\ell$-algebra $L$ (the most important case being $L=\Qlbar$).

Let $\mathbb{L}$ be a representation of $W_E$ on a finite, locally free $L$-module $\mathbb{L}$, which is continuous when $\mathbb{L}$ is given the colimit topology in the presentation
\[
  \mathbb{L}=\varinjlim\limits_{N\subseteq \mathbb{L} \textrm{ fin.gen. } \Z_\ell-\textrm{submodule}} N,
\]
with each $N$ equipped with the $\ell$-adic topology.
Then for each $\mathcal{F}\in D_\lis(\Bun_n,L)$ the object
\[
  {\mathbb{L}} \overset{\bs}\otimes_{L}^L \mathcal{F}\in D_{\bs}(\Bun_n, L) 
\]
lies in $D_\lis(\Bun_n,L)$. Using the presentation (\Cref{equation-diagram-for-w-e-invariants}) we can therefore deduce that
\[
  R\Gamma(W_E,\mathbb{L} \overset{\bs}\otimes^{L}_L \mathcal{F})\in D_\lis(\Bun_n,L)
\]
for each $W_E$-equivariant object $\mathcal{F}\in D_\lis(\Bun_n,L)$.

The functor
$$
T_{V_{\rm std}}: D_\bs(\Bun_n, L) \to D_\bs(\Bun_n \times \mathrm{Div}^1, L), 
$$
defined by the formula
$$
T_{V_{\rm std}} (\mathcal{F}) = \overrightarrow{h}_{\natural} \left( \overleftarrow{h}^\ast \mathcal{F} \overset{\bs} \otimes_{L}^L \mathcal{S}_{V_{\rm std}}^\prime \right),
$$
where $\mathcal{S}_{V_{\rm std}}^\prime$ denotes the object constructed in \cite[\S IX.2]{fargues2021geometrization}, induces, by \cite[Corollary IX.2.3]{fargues2021geometrization} a functor
$$
T_{V_{\rm std},\lis}^{\rm equiv}: D_\lis(\Bun_n, L) \to D_\lis(\Bun_n, L)^{BW_E}.
$$
By the considerations above, if $\mathbb{L}$ is a representation of $W_E$ as before, for any $\mathcal{F} \in D_\lis(\Bun_n, L)$, we have
\[
  \mathrm{Av}_{\mathbb{L}}(\mathcal{F}):=R\Gamma(W_E,\mathbb{L} \overset{\bs}\otimes^{L}_L T_{V_{\rm std},\lis}^{\rm equiv}(\mathcal{F}))\in D_\lis(\Bun_n,L).
\]
This recipe defines the \textit{averaging functor} associated to $\mathbb{L}$
\[
  \mathrm{Av}_{\mathbb{L}}\colon D_\lis(\Bun_n,L)\to D_\lis(\Bun_n,L)
\]
It recovers the previous definition when $L=\Lambda$ is torsion, through the identification $D_\lis(\Bun_n,L)\cong D_\et(\Bun_n,\Lambda)$, thanks to \cite[Proposition VII.5.2]{fargues2021geometrization}.

The description 
\[
  \mathrm{Av}_{\mathbb{L}}(-)\cong \mathcal{A}v_{\mathbb{L}}\ast(-)
\]
of it via the spectral action for the analogously defined perfect complex
\[
  \mathcal{A}v_{\mathbb{L}}:=R\Gamma(W_E,\mathbb{L}{\otimes}_L f^\ast V_{\mathrm{std}})
\]
on the algebraic stack $X_{\hat{G}}$ over $L$, remains valid.

\begin{remark}
\label{commutation-ct-averaging-d-lis}
We do not know how to prove an analogue of \Cref{sec:averaging-functors-2-averaging-functors-and-constant-terms} on $D_\lis(-,L)$ when $\mathrm{char} L=0$ since we don't know if excision holds in this case.
\end{remark}

\section{Application to irreducibility of the Fargues-Scholze parameters if $n=2$}
\label{sec:irred-farg-scholze}

We keep the assumptions from \Cref{sec:notation}, but assume additionally that $\Lambda$ is an algebraically closed field of characteristic $\ell$.
Moreover, we assume $n=2$, i.e., 
\[
  G=\GL_2.
\]
Let $b\in B(G)$ be basic and $\pi\in \mathrm{Rep}^\infty_\Lambda G_b(E)$ be irreducible geometrically cuspidal. We want to prove that the $L$-parameter
\[
  \varphi_\pi\colon W_E\to \hat{G}(\Lambda)
\]
associated to $\pi$ by Fargues-Scholze is irreducible, as claimed in the introduction, \Cref{sec:introduction-1-main-theorem-on-irreducibility}.

\begin{theorem}
  \label{sec:appl-irred-farg-on-cusp-hecke-action-contains-no-rank-1-stuff}
  Assume that $\mathcal{F}\in D_{{\et,\mathrm{cusp}}}(\Bun_n,\Lambda)$ is ULA. Then any irreducible subquotient of any cohomology sheaf of the $W_E$-equivariant object 
  \[
    T_{V_{\mathrm{std}}}(\mathcal{F})
  \]
  is of rank $2$.
  
     In particular, the Fargues-Scholze parameter associated to a smooth irreducible supercuspidal representation $\pi$ of $\GL_2(E)$ is irreducible.
\end{theorem}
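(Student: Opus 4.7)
The plan is to combine the vanishing theorem \Cref{sec:aver-funct-triv-1-av-functor-vanishes-for-n-2-e-trivial}, which yields $\mathrm{Av}_{\mathbb{L}_\chi}(\mathcal{F})=0$ for every character $\chi\colon W_E\to\Lambda^\times$ and every cuspidal $\mathcal{F}$ (in the case $n=2$), with the spectral-action description of \Cref{sec:aver-funct-via-1-averaging-functor-via-spectral-action}. Since the spectral action commutes with homotopy limits in its first variable, unwinding the definition of $\mathcal{A}v_{\mathbb{L}_\chi}=R\Gamma(W_E,\mathbb{L}_\chi\otimes f^\ast V_{\mathrm{std}})$ yields, up to shift and Tate twist, the identity
\[
\mathrm{Av}_{\mathbb{L}_\chi}(\mathcal{F})\cong R\Gamma\bigl(W_E,\, \mathbb{L}_\chi \otimes_\Lambda T_{V_{\mathrm{std}}}(\mathcal{F})\bigr).
\]
I would therefore start from the resulting vanishing $R\Gamma(W_E,\mathbb{L}_\chi \otimes_\Lambda T_{V_{\mathrm{std}}}(\mathcal{F})) = 0$ for every character $\chi$.

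Next, I would suppose for contradiction that some cohomology sheaf $H^i(T_{V_{\mathrm{std}}}(\mathcal{F}))$, viewed as a $W_E$-equivariant sheaf, admits an irreducible subquotient of $W_E$-rank one, corresponding to a character $\chi$. The ULA assumption on $\mathcal{F}$ ensures that on each geometric stalk the cohomology carries a finite-dimensional continuous $W_E$-representation. After twisting by $\mathbb{L}_{\chi^{-1}}$, the trivial character appears as a Jordan--H\"older factor of $H^i(\mathbb{L}_{\chi^{-1}}\otimes_\Lambda T_{V_{\mathrm{std}}}(\mathcal{F}))$ at some stalk.

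The next ingredient is the elementary fact that for a finite-dimensional continuous representation $V$ of $W_E$ over $\Lambda$, $R\Gamma(W_E, V)\neq 0$ if and only if the trivial character is a Jordan--H\"older factor of $V$. This reduces by an induction on length to the case of irreducible $V$, using that $R\Gamma(W_E, \mathbb{L}_\mu)=0$ for every nontrivial character $\mu$ (which follows from Tate duality, giving $H^0=H^2=0$, and the Euler--Poincar\'e identity $\chi(W_E, V)=0$). Combined with the hypercohomology spectral sequence $E_2^{p,q}=H^p(W_E, H^q(T_{V_{\mathrm{std}}}(\mathcal{F})))\Rightarrow H^{p+q}(W_E, T_{V_{\mathrm{std}}}(\mathcal{F}))$, trivial Jordan--H\"older contributions at the extremal cohomological degrees of $T_{V_{\mathrm{std}}}(\mathcal{F})$ cannot be cancelled by differentials and survive to $E_\infty$, yielding a nonzero $R\Gamma$ and contradicting the vanishing above.

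For the application, let $\pi$ be an irreducible supercuspidal representation of $\GL_2(E)$ and let $b\in B(\GL_2)$ be basic so that $\mathcal{F}_\pi$ lives over $\Bun_2^b$. By \Cref{sec:geom-cusp-repr-geom-cusp-representations-for-n-2}, the sheaf $\mathcal{F}:=j_{b,!}\mathcal{F}_\pi$ is cuspidal, and it is ULA by admissibility of $\pi$. The Fargues--Scholze construction provides a semisimple parameter $\varphi_\pi$ such that $T_{V_{\mathrm{std}}}(\mathcal{F})$ identifies with $\mathcal{F}\otimes_\Lambda \varphi_\pi$ as $W_E$-equivariant objects (at least at the level of semisimplification of the $W_E$-action). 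If $\varphi_\pi$ were reducible it would have a character as a Jordan--H\"older factor, forcing such a character to appear as an irreducible subquotient of $T_{V_{\mathrm{std}}}(\mathcal{F})$ and contradicting the main statement. The technical heart of the proof is the spectral sequence step; its validity rests on $R\Gamma(W_E,-)$ annihilating all nontrivial irreducible $W_E$-representations, so that only trivial Jordan--H\"older factors contribute on the $E_2$-page.
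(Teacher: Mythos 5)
Your overall strategy coincides with the paper's: combine the vanishing theorem for $\mathrm{Av}_{\mathbb{L}_\chi}$ with the spectral-action identity $\mathrm{Av}_{\mathbb{L}}(\mathcal{F})\cong R\Gamma(W_E,\mathbb{L}\otimes T_{V_{\mathrm{std}}}(\mathcal{F}))$, and then argue that the resulting vanishing forces all $W_E$-irreducible subquotients of the cohomology of $T_{V_{\mathrm{std}}}(\mathcal{F})$ to have rank exactly $2$. However, the way you execute the representation-theoretic part has a genuine gap, and you also omit necessary pieces.

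First, your "elementary fact" is false: it is \emph{not} true that $R\Gamma(W_E,V)\neq 0$ if and only if the trivial character is a Jordan--H\"older constituent of $V$, nor that $R\Gamma(W_E,\mathbb{L}_\mu)=0$ for every nontrivial character $\mu$. The cyclotomic character is a counterexample: by Tate duality $H^2(W_E,\chi_{\mathrm{cyc}})\cong H^0(W_E,\Lambda)^\vee\neq 0$ (and hence $H^1\neq 0$ by the Euler characteristic formula). Conversely, a nonsplit extension $0\to\chi\to V\to\mathbf{1}\to 0$ with $\chi\neq\mathbf{1},\chi_{\mathrm{cyc}}$ has the trivial character as a Jordan--H\"older factor but $R\Gamma(W_E,V)=0$: $H^0=0$ because $\mathbf{1}$ is only a quotient, $H^2=0$ because $\chi_{\mathrm{cyc}}$ is not a quotient, and then $H^1=0$ by the Euler characteristic. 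So neither direction of your claimed equivalence holds, and the subsequent spectral sequence argument about "extremal degree contributions that cannot cancel" collapses (and in any case it confuses subquotient with subobject, which is what $H^0$ actually detects). The paper avoids all of this via the cleaner inductive peel-off Lemma \ref{sec:appl-irred-farg-complex-of-lambda-modules-with-vanishing-averaging-functors}: pick an irreducible subobject $V\subseteq H^0(A)$, tensor with $V^\ast$ so that the trivial representation appears as a genuine \emph{subobject}, observe that $H^0(W_E,V^\ast\otimes V)\hookrightarrow H^0(W_E,V^\ast\otimes A)$ is nonzero, deduce $\mathrm{rk}(V)=d$, pass to the cone, and induct.

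Second, you only invoke vanishing of $\mathrm{Av}_{\mathbb{L}_\chi}$ for rank-one $\mathbb{L}_\chi$, which at best rules out rank-one subquotients. The statement of the theorem asserts that \emph{all} irreducible subquotients have rank exactly $2$, and the paper also feeds in Proposition \ref{sec:aver-funct-via-1-vanishing-conjecture} to kill the rank $\geq 3$ case. Finally, the passage from the statement about subquotients of $T_{V_{\mathrm{std}}}(\mathcal{F})$ to irreducibility of $\varphi_\pi$ is not as immediate as you suggest — the paper handles it via excursion operators and trace identities when $\ell\neq 2$, and needs a separate lifting argument to characteristic $0$ when $\ell=2$; your proposal does not engage with either of these points.
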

We note that by \Cref{sec:const-term-funct-prop-geom-cuspi-implies-supercuspidal} the category of
\[
  D_{\et,\mathrm{cusp}}(\Bun_n,\Lambda)\cong \prod\limits_{b\in B(G) \textrm{ basic}} D_{\et,\mathrm{cusp}}(\Bun_n^b,\Lambda)
\]
has a natural $t$-structure. Thus the statement of \Cref{sec:appl-irred-farg-on-cusp-hecke-action-contains-no-rank-1-stuff} makes sense.

\begin{proof}
Since $\mathcal{F}\in D_{{\et,\mathrm{cusp}}}(\Bun_n,\Lambda)$, by \Cref{sec:v_m-1-implies-cuspidal-implies-support-on-non-semistable-locus} and \Cref{sec:const-term-funct-prop-geom-cuspi-implies-supercuspidal}, $\mathcal{F}$ is supported on the semistable locus and represented by cuspidal representations for varying $G_b(E)$ there. 
  
  By \Cref{sec:aver-funct-const-1-strengthening-for-hecke-functor-and-constant-term} the object
  \[
    T_{V_{\mathrm{std}}}(\mathcal{F})
  \]
  is also supported on the semistable locus and represented by cuspidal representations for varying $G_b(E)$ there.
  By \cite[Theorem I.5.1]{fargues2021geometrization}, the stalks are admissible. Thus, as a module over the Bernstein center, the object $T_{V_{\mathrm{std}}}(\mathcal{F})$ can be represented as a complex of modules of finite length. Forgetting the action of the Bernstein center, the first statement then follows from \Cref{sec:appl-irred-farg-complex-of-lambda-modules-with-vanishing-averaging-functors} below because of \Cref{sec:aver-funct-triv-1-av-functor-vanishes-for-n-2-e-trivial} and \Cref{sec:aver-funct-via-1-vanishing-conjecture}.
  If $\ell\neq 2$, the last statement follows because if $\pi$ is a smooth irreducible supercuspidal representation of $\GL_2(E)$, $j_{1,!}\mathcal{F}_\pi$ is cuspidal, cf. \Cref{sec:geom-cusp-repr-geom-cusp-representations-for-n-2}, and because for $\ell \neq 2$, irreducible representations of dimension $1$ or $2$ are uniquely determined by their traces, the traces are linearly independent and the excursion operators associated with the excursion data
  \[
   V_{\mathrm{std}}^\ast, V_{\mathrm{std}}, (1,\gamma)\in W_E^{2}, \Lambda\hookrightarrow V^{\ast}_{\mathrm{std}}\otimes_\Lambda V_{\mathrm{std}}\twoheadrightarrow \Lambda
 \]
 records the trace of $\gamma$.

 Thus we are left with the case that $\ell=2$, where we have to argue differently. The following proof works for arbitrary $\ell$. Let $A=\Zlbar$,  $L=\Qlbar$. Since Fargues-Scholze's construction of $L$-parameters is compatible with torsion by characters, we can always twist $\pi$ by a character. Therefore, cf. \cite[Theorem 4.17, Corollary 4.20]{henniart_vigneras_representations}, we are free to assume that $\Lambda=\Flbar$.  
 In this case, by \cite[Th\'eor\`eme 3.26]{minguez_secherre_types_modulo_l}, if we set $\mathcal{F}=j_{1,!} \mathcal{F}_\pi \in D_\et(\Bun_2, \Lambda)$, we can find some $\mathcal{F}_A\in D_\lis(\Bun_2,A)$ with the same support, which corresponds to some shift of a sheaf associated with an admissible representation on an $\ell$-adically separated, $\ell$-torsion free $A$-module, such that $\mathcal{F}_A\otimes_A^L\Lambda\cong \mathcal{F}$. Set $\mathcal{F}_L:=\mathcal{F}_A\otimes_A^L L$.
 Then $\mathcal{F}_L$ is again Schur-irreducible and by what we have shown about $T_{V_{\mathrm{std}}}(\mathcal{F})$, its associated Fargues-Scholze parameter has the property that its reduction to $\Lambda$ is irreducible. Indeed, as $L$ has characteristic $0$ the $L$-parameter $\varphi_{\mathcal{F}_L}$ of $\mathcal{F}_L$ is uniquely determined by its traces while $T_{V_{\mathrm{std}}}(\mathcal{F}_L)$ reduces $W_E$-equivariantly to $T_{V_{\mathrm{std}}}(\mathcal{F})$.  Let $e\in H^0(X_{\hat{\GL_2},A},\mathcal{O})$ be the idempotent cutting out the locus of $L$-parameters admitting a model over $A$ whose reduction is irreducible. Then $e$ acts trivially on $\mathcal{F}_L$. Because $\mathrm{End}_{D_\lis(\Bun_2,A)}(\mathcal{F}_A)$ is $\ell$-torsion free we can conclude that $e$ acts trivially on $\mathcal{F}_A$. Thus, $e$ acts trivially in $\mathcal{F}$ and we can conclude that the $L$-parameter associated to $\mathcal{F}$ must be irreducible. This finishes the proof.  
\end{proof}

\begin{lemma}
  \label{sec:appl-irred-farg-complex-of-lambda-modules-with-vanishing-averaging-functors}
  Let $A\in D(\mathrm{Rep}_\Lambda^{\mathrm{cont}} W_E)$ be a bounded complex with finite dimensional cohomology objects. Fix $d\geq 0$. Assume that
  \[
    R\Gamma(W_E,\mathbb{L}\otimes^{L}_\Lambda A)=0
  \]
  for each irreducible $W_{E}$-representation $\mathbb{L}$ of rank $\neq d$.
  Then each irreducible subquotient of each cohomology sheaf of $A$ is of rank $d$. 
\end{lemma}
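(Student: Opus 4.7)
The plan is to argue by induction on the total length $\ell(A) := \sum_i \mathrm{length}_\Lambda H^i(A)$. For the base case $\ell(A)=1$, one has $A \cong \mathbb{L}[-n]$ for some irreducible continuous $W_E$-representation $\mathbb{L}$ and some $n \in \mathbb{Z}$. The trace map exhibits the trivial representation as a direct summand of $\mathbb{L} \otimes_\Lambda \mathbb{L}^\vee$, so $R\Gamma(W_E, \mathbb{L}^\vee \otimes_\Lambda A)$ contains $R\Gamma(W_E, \Lambda)[-n]$ as a nonzero direct summand. Applying the hypothesis to $\mathbb{L}^\vee$ (which has the same rank as $\mathbb{L}$) forces $\mathrm{rk}(\mathbb{L}) = d$.

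For the inductive step ($\ell(A) \geq 2$), let $i_0 := \min\{i : H^i(A) \neq 0\}$ and pick any simple subrepresentation $\mathbb{L}_0 \hookrightarrow H^{i_0}(A)$. The inclusion lifts via $\tau_{\leq i_0} A \cong H^{i_0}(A)[-i_0]$ to a nonzero morphism $\mathbb{L}_0[-i_0] \to A$. In the hypercohomology spectral sequence $E_2^{p,q} = \mathrm{Ext}^p_{W_E}(\mathbb{L}_0, H^q(A)) \Rightarrow \mathrm{Ext}^{p+q}_{W_E}(\mathbb{L}_0, A)$, the class $E_2^{0, i_0} = \mathrm{Hom}_{W_E}(\mathbb{L}_0, H^{i_0}(A))$ is nonzero (it contains the inclusion) and survives to $E_\infty$: incoming differentials originate in $E_r^{-r, i_0 + r - 1}$ (zero since $p < 0$), and outgoing ones land in $\mathrm{Ext}^{\geq 2}_{W_E}(\mathbb{L}_0, H^{j}(A))$ with $j < i_0$, hence zero. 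Therefore $R\Gamma(W_E, \mathbb{L}_0^\vee \otimes_\Lambda A) = R\mathrm{Hom}_{W_E}(\mathbb{L}_0, A) \neq 0$, and the hypothesis forces $\mathrm{rk}(\mathbb{L}_0) = d$. Since $\mathbb{L}_0$ was arbitrary, the socle $\mathrm{soc}(H^{i_0}(A))$ consists only of rank-$d$ composition factors. Dually, using local Tate duality --- which identifies the hypothesis for $A$ with that for $A^\vee$ up to a rank-preserving Tate twist --- every simple quotient of the highest cohomology $H^{i_{\max}}(A)$ also has rank $d$.

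To conclude by induction, form the triangle $S[-i_0] \to A \to A_1$ with $S := \mathrm{soc}(H^{i_0}(A))$; this strictly decreases the total length. The main obstacle is that the cofiber $A_1$ need not satisfy the hypothesis: the triangle yields $R\Gamma(W_E, \mathbb{L} \otimes_\Lambda A_1) \cong R\Gamma(W_E, \mathbb{L} \otimes_\Lambda S)[1-i_0]$, which for $\mathbb{L}$ of rank $\neq d$ may be nonzero owing to potential $\mathrm{Ext}^{1}$- or $\mathrm{Ext}^2$-classes between rank-$d$ and rank-$\neq d$ irreducibles. The plan is to circumvent this by extracting from the full vanishing $R\mathrm{Hom}_{W_E}(\mathbb{L}, A) = 0$ a cascade of constraints on all $\mathrm{Ext}^k_{W_E}(\mathbb{L}, H^q(A))$ via the same hypercohomology spectral sequence, and then to exploit the cohomological dimension $\leq 2$ of $W_E$ on $\ell$-adic coefficients (valid since $\ell \neq p$) to bound the interactions. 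Iterating through the socle filtration of each $H^i(A)$, paired symmetrically with the cosocle filtration via Tate duality, one eventually shows that each graded piece consists of rank-$d$ irreducibles, completing the induction.
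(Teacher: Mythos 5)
Your inductive set-up is the right one and matches the paper's: peel off an irreducible $V$ from the lowest nonvanishing cohomology, use the hypothesis to conclude $\operatorname{rk} V = d$, form the cofiber, and try to descend. Your spectral sequence argument for $R\mathrm{Hom}_{W_E}(\mathbb{L}_0, A)\neq 0$ is a correct (if slightly heavier) version of the paper's truncation argument.

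The gap is precisely at the step you flag as ``the main obstacle,'' and it is not an obstacle that needs circumventing --- it is a point you must prove directly, and the proof is short. The claim is: for $W$ irreducible of rank $\neq d$ and $V$ irreducible of rank $d$, one has $R\Gamma(W_E, W\otimes_\Lambda V)=0$ \emph{in every degree}. Indeed, $H^0(W_E, W\otimes V)=\mathrm{Hom}_{W_E}(W^\vee, V)=0$ since $W^\vee$ and $V$ are non-isomorphic irreducibles; by local (Tate) duality $H^2(W_E, W\otimes V)$ is dual to $H^0$ of a cyclotomic twist of $W^\vee\otimes V^\vee$, which vanishes for the same reason since twisting preserves rank and irreducibility; and then $H^1$ must vanish because the Euler characteristic of $W_E$-cohomology on finite-dimensional $\Lambda$-representations is zero (here $\ell\neq p$). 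So the ``potential $\mathrm{Ext}^1$- or $\mathrm{Ext}^2$-classes between rank-$d$ and rank-$\neq d$ irreducibles'' you worry about simply do not exist, the cofiber $A_1$ (or $B$ in the paper's notation) satisfies the same hypothesis, and the induction closes immediately. Your proposed ``cascade of constraints'' and ``iteration through socle and cosocle filtrations'' is not fleshed out into an argument and is unnecessary once the vanishing above is in place; as written, the proposal is incomplete.
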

\begin{proof}
  We may assume that $H^{i}(A)=0$ for $i<0$ and $H^0(A)\neq 0$.
  Assume that $V\subseteq H^0(A)$ is an irreducible $W_E$-subrepresentation, and define $B$ via the triangle
  \[
    V\to A\to B.
  \]
  Then the morphism
  \[
    R\Gamma(W_E,V^\ast\otimes_\Lambda V)\to R\Gamma(W_E,V^\ast\otimes_\Lambda A)
  \]
  is an injection of a non-zero object in degree $0$.  By our assumption we can conclude that $V$ is of rank $d$.
  Let $W$ be an irreducible $W_E$-representation of rank $\neq d$. Then
  \[
    R\Gamma(W_E,W\otimes_\Lambda V)=0.
  \]
  Indeed, in degree $0$ this follows as $V,W$ are irreducible of different rank $2$, by duality one deduces similarly the same in degree $2$, and then vanishing in degree $1$ follows as the Euler characteristic of $W_E$-cohomology is $0$. 
  We can therefore deduce that $B$ satisfies the same assumption as $A$. Hence, we may induct on the number of irreducible subquotients of the cohomology sheaves of $A$. This finishes the proof.  
\end{proof}


\section{Application to Fargues' conjecture in the irreducible case}
\label{sec:proof-fargues-conjecture-irreducible-case}

We want to give some application of the averaging functors to the irreducible case of Fargues' conjecture for $\GL_n$.
Namely, starting with an irreducible $W_E$-representation $\mathbb{L}$ over $L=\Qlbar$ we want to construct a non-zero irreducible Hecke eigensheaf with eigenvalue $\mathbb{L}$. However, to construct it \textit{we have to assume the local Langlands correspondence for $\GL_n$,} or more precisely its realization in the compactly supported cohomology of the Lubin-Tate tower.

We keep the notations from \Cref{sec:notation}, but add the assumption that $L$ is an algebraically closed field of characteristic $0$, e.g., $L=\Qlbar$.
When speaking about irreducible representations of $W_E$ on $L$-modules, we implicitly assume that they are continuous when $L$ is equipped with the colimit topology of the $\ell$-topology of its finitely generated $\Z_\ell$-submodules.

\subsection{Generalities on Hecke eigensheaves for irreducible representations}
\label{sec:gener-hecke-eigensh}

Let $\mathbb{L}$ be an \textit{irreducible} $L$-representation of $W_E$.
Let
\[
  \mathcal{F}\in D_\lis(\Bun_n,L)
\]
be a Hecke eigensheaf with eigenvalue $\mathbb{L}$, i.e., we are given natural isomorphisms
\[
  T_V(\mathcal{F})\cong r_{V,\ast}(\mathbb{L})\boxtimes_L \mathcal{F}
\]
for each representation $V\in \mathrm{Rep}_L(\hat{G})$, cf. \Cref{sec:statement-of-fargues-conjecture}.
 We moreover assume that $\mathcal{F}$ is ULA.

Let
\[
  \mathrm{LL}_{FS}\colon \mathcal{O}(X_{\hat{G},L})\to \mathcal{Z}(G,L)
\]
be the morphism into the Bernstein center $\mathcal{Z}(G,L)$ for $D_{\lis}(\Bun_n,L)$ constructed by Fargues-Scholze in \cite{fargues2021geometrization}, and let 
\[
  V_1,\ldots, V_m\in \mathrm{Rep}_L (\hat{G}), (\gamma_1,\ldots, \gamma_m)\in W_E^{m}, \alpha\colon L \to V_1\otimes_L\ldots, \otimes_L V_m, \beta\colon V_1\otimes_L\ldots, \otimes_L V_m\to L 
\]
be an excursion datum with associated function $f\in \mathcal{O}(X_{\hat{G},L})$. By the Hecke eigensheaf property we can then see that $\mathrm{LL}_{\mathrm{FS}}(f)$ acts
on $\mathcal{F}$ via the same scalar $\in L$, which gives the composition
\[
  L\xrightarrow{\alpha} r_{V_1,\ast}(\mathbb{L})\otimes_L \ldots \otimes_L r_{V_m,\ast}(\mathbb{L}) \xrightarrow{\gamma_1\otimes\ldots \otimes \gamma_m} r_{V_1,\ast}(\mathbb{L})\otimes_L \ldots \otimes_L r_{V_m,\ast}(\mathbb{L}) \xrightarrow{\beta} L.
\]
By compatibility with parabolic induction, cf.\ \cite[Theorem IX.6.1]{fargues_geometrization_of_the_local_langlands_correspondence_overview}, this implies the following lemma.

\begin{lemma}
  \label{sec:gener-hecke-eigensh-1-hecke-eigensheaf-supported-on-semistable-locus}
  Let as before $\mathcal{F}\in D_\lis(\Bun_n,L)$ be a ULA Hecke eigensheaf with (irreducible) eigenvalue $\mathbb{L}$. Then $\mathcal{F}$ is supported on the semistable locus and supercuspidal there.
\end{lemma}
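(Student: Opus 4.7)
The plan is to deduce both claims from irreducibility of $\mathbb{L}$ and the compatibility of the Fargues--Scholze correspondence with parabolic induction recalled just above. First I would unpack the Hecke eigensheaf property as a statement about the Bernstein center action: the calculation preceding the lemma shows that for every excursion datum with associated function $f \in \mathcal{O}(X_{\hat{G},L})$, $\mathrm{LL}_{FS}(f)$ acts on $\mathcal{F}$ by the scalar obtained by evaluating the datum on $\mathbb{L}$. Equivalently, the resulting character $\mathcal{O}(X_{\hat{G},L}) \to L$ is the one associated with the point $[\mathbb{L}] \in X_{\hat{G},L}(L)$. Consequently, for each $b \in B(G)$ and each irreducible subquotient $\pi$ of a cohomology sheaf of the complex of smooth $G_b(E)$-representations corresponding to $j_b^*\mathcal{F}$, the Fargues--Scholze semisimple parameter $\varphi_\pi^{\mathrm{ss}}$ must equal $\mathbb{L}$ (here I use that $\mathbb{L}$ is irreducible, hence already semisimple).

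Next I would rule out the non-basic strata. For $b \in B(G)$ non-basic, $G_b$ is an inner form of a proper standard Levi $M_b \subsetneq G$, and compatibility of $\mathrm{LL}_{FS}$ with parabolic induction forces the Fargues--Scholze parameter of \emph{any} irreducible smooth $G_b(E)$-representation to factor through $\hat{M}_b(L) \subsetneq \hat{G}(L)$. Since the $n$-dimensional irreducible representation $\mathbb{L}$ does not factor through any proper Levi, this contradicts $\varphi_\pi^{\mathrm{ss}} = \mathbb{L}$ unless $j_b^*\mathcal{F} = 0$. Hence $\mathcal{F}$ is supported on the semistable locus $\Bun_n^{\mathrm{ss}}$.

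Finally I would argue supercuspidality on the semistable strata. Fix $b \in B(G)$ basic and write $j_b^*\mathcal{F} \cong \mathcal{F}_{\pi_b}$ for a complex of smooth $G_b(E)$-representations $\pi_b$. The ULA assumption together with \cite[Theorem I.5.1]{fargues2021geometrization} ensures that the cohomology representations of $\pi_b$ are admissible, so every Bernstein component has finite length and one may speak of irreducible subquotients. If some such subquotient $\pi$ were not supercuspidal, it would appear in a parabolic induction from a proper Levi $M_b \subsetneq G_b$, and compatibility of $\mathrm{LL}_{FS}$ with parabolic induction would again force $\varphi_\pi^{\mathrm{ss}}$ to factor through $\hat{M}_b$, contradicting $\varphi_\pi^{\mathrm{ss}} = \mathbb{L}$. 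Hence every such irreducible subquotient is supercuspidal.

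The main obstacle is the rigorous passage from the global Hecke eigensheaf property to a statement at the level of irreducible subquotients of cohomology sheaves of $j_b^*\mathcal{F}$; this is precisely the role of the ULA assumption, which guarantees that these cohomology sheaves are admissible and thus decompose into pieces carrying well-defined Fargues--Scholze parameters against which the excursion characters prescribed by $\mathbb{L}$ can be matched.
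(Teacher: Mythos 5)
Your proof is correct and follows essentially the same route as the paper. The paper packages the argument slightly more compactly: rather than tracking the semi-simple Fargues--Scholze parameter of each irreducible subquotient of each stalk, it introduces the idempotent $e \in \mathcal{O}(X_{\hat{G}})$ cutting out the connected component of $[\mathbb{L}]$, observes that the Hecke eigensheaf property forces $e$ to act by $1$ on $\mathcal{F}$, and then invokes compatibility with parabolic induction to show $e$ acts by $0$ both on ULA complexes supported on non-semistable strata and on objects induced from proper Levis on semistable strata. That is precisely the same content as your step-by-step comparison of infinitesimal characters, just phrased projector-first. One small point of phrasing worth tightening: for non-basic $b$, the Fargues--Scholze parameter of a smooth irreducible $G_b(E)$-representation lands in $\hat{M}_b$ \emph{by construction} (since $G_b$ is an inner form of the Levi $M_b$); what compatibility with parabolic induction actually supplies is that the excursion action on $j_{b,!/\ast}$ of such a sheaf, viewed in $D_\lis(\Bun_n,L)$, is computed by this $\hat{M}_b$-valued parameter pushed forward along $\hat{M}_b \hookrightarrow \hat{G}$, which is what lets you match it against evaluation at $[\mathbb{L}]$. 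With that clarification, your argument and the paper's coincide.
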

\begin{proof}
  Let $e\in \mathcal{O}(X_{\hat{G}})$ be the idempotent cutting out the connected component of $\mathbb{L}\in X_{\hat{G}}(L)$. By \cite[Chapter VIII]{fargues2021geometrization} the element $e$ can be represented by an excursion datum like above. By \cite[Theorem IX.6.1]{fargues2021geometrization} the element $e$ must act trivially on every ULA sheaf supported on the non-semistable, or on any parabolic induction supported on the semistable locus.
\end{proof}

In the case that $L$ is replaced by a field $\Lambda$ of characteristic $\ell$ (and $\mathbb{L}$ an irreducible $W_E$-representation over $\Lambda$) we proved the stronger statement that $\mathcal{F}$ is automatically cuspidal using averaging functors in \Cref{sec:aver-funct-via-2-hecke-eigensheaves-of-irred-autom-cuspidal}. We don't know if this stronger statement is true in general.

\begin{lemma}
  \label{sec:farg-conj-gl_n-1-hecke-eigensheaf-with-irreducible-stalk}
  With the same notation from before, assume moreover that for some $b\in B(G)$ basic,
  the stalk $\mathcal{F}_b:=j^\ast_b\mathcal{F}$ corresponds to an
  irreducible representation. Then for each $c\in B(G)$ basic, the stalk $\mathcal{F}_c$ is irreducible.
\end{lemma}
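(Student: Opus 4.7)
The plan is to use the Hecke eigensheaf property to transport irreducibility from the stalk at $b$ to the stalk at any adjacent basic class, and then iterate. Since the basic elements of $B(G)$ for $G=\GL_n$ are indexed by $\Z$ via $\kappa$, it suffices, by induction on $|\kappa(c)-\kappa(b)|$, to treat $c$ with $\kappa(c)=\kappa(b)\pm 1$. I will focus on the $+1$ case; the $-1$ case is analogous using the dual standard representation $V_{\mathrm{std}}^\vee$.

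First I would apply the Hecke eigensheaf identification
\[
T_{V_{\mathrm{std}}}(\mathcal{F}) \cong \mathcal{F}\boxtimes \mathbb{L}
\]
and restrict both sides to $\Bun_n^c\times \Div^1$, obtaining $\mathcal{F}_c\boxtimes \mathbb{L}$ on the right. On the left, since $\mathcal{F}$ is supported on the semistable locus by \Cref{sec:gener-hecke-eigensh-1-hecke-eigensheaf-supported-on-semistable-locus}, the only contribution to $T_{V_{\mathrm{std}}}(\mathcal{F})|_{\Bun_n^c\times \Div^1}$ comes from modifications whose source is basic; as degree-one modifications raise $\kappa$ by one, only the class $b$ with $\kappa(b)=\kappa(c)-1$ contributes, and the relevant geometric object is the basic local Shtuka space parametrizing injections $\mathcal{E}_b\hookrightarrow \mathcal{E}_c$ with cokernel supported at one point of $\Div^1$.

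Next I would invoke the Hansen--Kaletha--Weinstein form of the Kottwitz conjecture for basic local Shimura varieties \cite{kaletha_weinstein_on_the_kottwitz_conjecture_for_local_shimura_varieties}, combined with the (classical) local Langlands and Jacquet--Langlands correspondences for $\GL_n$ and its inner forms (for which each $L$-packet on a given inner form is a singleton). This identifies the compactly supported cohomology of that basic local Shtuka space, evaluated on the irreducible supercuspidal representation $\pi_b$ corresponding to $\mathcal{F}_b$, with $\pi_c^{\mathrm{JL}}\boxtimes \mathbb{L}$ as a $G_c(E)\times W_E$-representation, up to the Tate twist and cohomological shift built into the normalization of $T_{V_{\mathrm{std}}}$; here $\pi_c^{\mathrm{JL}}$ denotes the Jacquet--Langlands transfer of $\pi_b$ to $G_c(E)$, which is irreducible. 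Comparing the two computations forces $\mathcal{F}_c\cong \mathcal{F}_{\pi_c^{\mathrm{JL}}}$, and iterating the adjacent step covers every basic $c$.

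The hard part will be carrying out the geometric computation of $T_{V_{\mathrm{std}}}(\mathcal{F})|_{\Bun_n^c\times \Div^1}$ in a way that cleanly interfaces with HKW: tracking the Tate twist, cohomological shift and $W_E$-equivariance precisely, and verifying that the Hecke correspondence $\Mod^1_n$ restricted to the basic-to-basic part really does reduce to the basic local Shtuka space in the Fargues--Scholze formalism so that the output of HKW can be read off as the stalk computation. Once this comparison is set up, the irreducibility of each $\mathcal{F}_c$ is formal from irreducibility of the Jacquet--Langlands transfer.
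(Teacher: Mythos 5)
Your proposal takes a genuinely different — and much heavier — route than the paper, and it contains a real gap in the way you invoke Hansen--Kaletha--Weinstein.

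The paper's proof is entirely formal and internal to the Fargues--Scholze formalism. Starting from the same observation (restrict the Hecke eigensheaf identity to the basic stratum one step over), it notes that $T_{V_{\rm std}^\ast}$ sends the stalk at $c$ to $\mathbb{L}^\vee\otimes_L\mathcal{F}_b$, an \emph{irreducible} $W_E$-equivariant sheaf; hence $T_{V_{\rm std}^\ast}$ kills all but one Jordan--H\"older factor of $\mathcal{F}_c$. Then, because $L$ has characteristic $0$, the unit $L\hookrightarrow V_{\rm std}\otimes V_{\rm std}^\ast$ is a \emph{split} injection (Schur), so $\mathcal{F}_c = T_L(\mathcal{F}_c)$ is a direct summand of $T_{V_{\rm std}}(T_{V_{\rm std}^\ast}(\mathcal{F}_c))$, which means $T_{V_{\rm std}^\ast}$ cannot actually kill any Jordan--H\"older factor of $\mathcal{F}_c$. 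Combining the two, $\mathcal{F}_c$ has a single Jordan--H\"older factor, i.e.\ is an irreducible representation placed in one degree. No local Langlands, no Jacquet--Langlands, no Kottwitz conjecture.

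The gap in your route is the sentence ``This identifies the compactly supported cohomology\ldots with $\pi_c^{\mathrm{JL}}\boxtimes \mathbb{L}$ as a $G_c(E)\times W_E$-representation.'' The result of Hansen--Kaletha--Weinstein that the paper actually uses later (their Theorem 6.5.2) is a \emph{trace identity on the elliptic regular locus}, not a representation-theoretic identification of the cohomology of the basic local shtuka space. A trace identity admits cancellations, so it does not, by itself, rule out the stalk being a virtual sum of several irreducibles placed in different degrees. The full identification of the cohomology as a representation is available unconditionally only for the Lubin--Tate transition $\kappa=0\to\kappa=1$ (cf.\ the paper's \Cref{sec:comp-one-stalk-isotypic-component-of-cohom-of-lubin-tate-space}, quoting Fargues--Scholze IX.7.4), not for an arbitrary basic-to-basic modification. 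In fact the logical order of the paper is the reverse of yours: it first proves irreducibility (and supercuspidality) of every basic stalk by the formal argument above, and only \emph{then} pins down which supercuspidal it is by matching Harish--Chandra characters on the elliptic locus via HKW, using crucially that irreducible supercuspidals are determined by their character there. Using the HKW trace identity to deduce irreducibility therefore puts the cart before the horse: to extract the representation from the trace you already need to know it is a single irreducible placed in a single degree, which is precisely the content of the lemma.
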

\begin{proof}
  Take $b,c\in B(G)$ basic with
  $\mathrm{deg}(\mathcal{E}_c)=\mathrm{deg}(\mathcal{E}_b)+1$.  We
  already know that $\mathcal{F}$ is supported on the semistable locus
  and that $\mathcal{F}_c$ is a complex of admissible supercuspidal representations.
  Then
  \[
    T_{V_{\rm std}^\ast}(\mathcal{F}_c)\cong \mathbb{L}^\vee\otimes_{L}
    \mathcal{F}_b
  \]
  as $W_{E}$-equivariant sheaves. The right hand side is irreducible (as a
  $W_{E}$-equivariant complex of sheaves). Thus, $T_{V_{\rm std}^\ast}$ kills all
  irreducible Jordan-H\"older factors of cohomology sheaves of $\mathcal{F}_c$, except one.  But using our assumption that $L$ is of characteristic $0$
  \[
    \mathcal{F}_c=T_{L}(\mathcal{F}_c)\to
    T_{V_{\rm std}}(T_{V_{\rm std}^\ast}(\mathcal{F}_c))
  \]
  is a split injection, and thus $\mathcal{F}_c$ is an irreducible representation, placed in some degree. 
\end{proof}

\subsection{The spectral action and the definition of
  $\mathrm{Aut}_{{\mathbb{L}}}$}
\label{sec:spectral-action-and-definition-of-aut-E}

We continue to use the notation of the previous section.
Let
\[
  i_{\mathbb{L}}\colon \Spec(L)\to X_{\hat{G}}
\]
be the natural morphism with image the point determined by $\mathbb{L}$. As the $W_E$-representation $\mathbb{L}$ is assumed to be irreducible, the image of $i_{\mathbb{L}}$ lands inside the smooth locus of $X_{\hat{G}}$, and thus the object
\[
  k(\mathbb{L})_{\mathrm{reg}}:=i_{\mathbb{L},\ast}(L)
\]
lies in $\mathrm{IndPerf}(X_{\hat{G}})$. More concretely, the residual gerbe of $X_{\hat{G}}$ at the image of $i_{\mathbb{L}}$ is given by
\[
  [\Spec(L)/\mathbb{G}_{m,L}]
\]
and the (underived) stalk of $k(\mathbb{L})_{\mathrm{reg}}$ at $x$ is the regular representation
\[
  \mathcal{O}(\mathbb{G}_m)
\]
of $\mathbb{G}_m$. Let us write
\[
  k(\mathbb{L})_{\mathrm{reg}}=\bigoplus\limits_{i\in \Z}k(\mathbb{L})_i,
\]
with $k(\mathbb{L})_i\subseteq k(\mathbb{L})_{\mathrm{reg}}$ the summand where $\Gm$ acts via $z\mapsto z^i$.

Let $\psi\colon E\to L^\times$ be a non-trivial character.
This yields the standard Whittaker datum $(B,\psi)$ for $G=\GL_n$, i.e., $B$ is the standard upper triangular Borel, and (using abuse of notation) $\psi$ denotes the regular character
\[
  \psi\colon U(E)\twoheadrightarrow U/[U,U](E)\cong E^{n-1}\xrightarrow{\sum }E\xrightarrow{\psi}L^\times
\]
with $U\subseteq B$ the unipotent radial of $B$, and $\sum$ the summation.

This yields the so-called \textit{Whittaker sheaf}
\[
  \mathcal{W}_\psi:=j_{1,!}(\mathcal{F}_{\mathrm{cInd}^{G_1(E)}_{U(E)} \psi})\in D_{\lis}(\Bun_n,L).
\]

We can now make the following definition. {Recall the spectral action of \Cref{sec:work-of-fargues-scholze}, denoted with $\ast$.}

\begin{definition}
  \label{definition-of-aut-E}
  For $\mathbb{L}$ an irreducible $W_E$-representation (as before) set
  \[
    \mathrm{Aut}_{\mathbb{L}}:=k(\mathbb{L})_{\mathrm{reg}}\ast \mathcal{W}_\psi \in
    D_\lis(\Bun_n,L).
  \]
\end{definition}

We note that the symbol $\mathrm{Aut}_{{\mathbb{L}}}$ is for now just a notation; the
rest of this text is devoted to showing that this object of
$D_\lis(\Bun_n,L)$ indeed has the properties prescribed by
Fargues' conjecture and therefore deserves its name.

The Hecke eigensheaf property of $\mathrm{Aut}_{\mathbb{L}}$ is formal, and holds more generally as recorded in the following
proposition.

\begin{proposition}
  \label{hecke-eigensheaf-property-for-k(E)-star-anything}
  Let $\mathcal{G}\in D_{\lis}(\Bun_n,L)$ be arbitrary, and let $\mathbb{L}$ be as
  before. Then $k(\mathbb{L})_{\mathrm{reg}} \ast \mathcal{G}$ is an Hecke eigensheaf
  (possibly zero) with eigenvalue $\mathbb{L}$.
\end{proposition}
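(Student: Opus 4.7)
The plan is to combine the spectral action interpretation of the Hecke functors with the skyscraper nature of $k(\mathbb{L})_{\mathrm{reg}}$; everything then follows from the projection formula and $L$-linearity of the spectral action. Concretely, let $I$ be a finite set and $(V_i)_{i\in I}\in \mathrm{Rep}(\hat G)^{I}$. By Theorem \ref{main-theorem-fargues-scholze-existence-of-the-spectral-action} and the remark immediately after it, the Hecke operator $T^I_V$ is computed, as a $W_E^I$-equivariant functor, as the spectral action by the perfect complex $\bigotimes_{i\in I} f^{\ast}V_i$, where the $W_E^I$-action comes from the fact that the fiber of $f^{\ast}V_i$ at a geometric point $\mathbb{L}'\in X_{\hat G}(L)$ is, canonically and $W_E$-equivariantly, the representation $r_{V_i,\ast}(\mathbb{L}')$. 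Associativity of the monoidal action then yields
\[
T^I_V(k(\mathbb{L})_{\mathrm{reg}}\ast \mathcal{G}) \;\cong\; \Bigl(\bigotimes_{i\in I} f^{\ast}V_i\Bigr)\ast\bigl(k(\mathbb{L})_{\mathrm{reg}}\ast \mathcal{G}\bigr)\;\cong\;\Bigl(\bigl(\bigotimes_{i\in I} f^{\ast}V_i\bigr)\otimes k(\mathbb{L})_{\mathrm{reg}}\Bigr)\ast \mathcal{G}.
\]

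Next, since $k(\mathbb{L})_{\mathrm{reg}} = i_{\mathbb{L},\ast}(L)$ and $i_{\mathbb{L}}$ is a closed immersion into the smooth locus of $X_{\hat G}$, the projection formula gives
\[
\Bigl(\bigotimes_{i\in I} f^{\ast}V_i\Bigr)\otimes i_{\mathbb{L},\ast}(L) \;\cong\; i_{\mathbb{L},\ast}\Bigl(i_{\mathbb{L}}^{\ast}\bigotimes_{i\in I}f^{\ast}V_i\Bigr) \;\cong\; i_{\mathbb{L},\ast}\Bigl(\bigotimes_{i\in I} r_{V_i,\ast}(\mathbb{L})\Bigr),
\]
with the $W_E^I$-action given on the right by the tensor product of the representations $r_{V_i,\ast}(\mathbb{L})$. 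Since $\bigotimes_{i\in I} r_{V_i,\ast}(\mathbb{L})$ is a finite-dimensional $L$-vector space (with its $W_E^I$-structure kept as an external decoration) and $L$-linearity of the spectral action allows one to pull such a constant $L$-module out, we obtain
\[
\Bigl(i_{\mathbb{L},\ast}\bigotimes_{i\in I}r_{V_i,\ast}(\mathbb{L})\Bigr)\ast \mathcal{G} \;\cong\;\Bigl(\bigotimes_{i\in I}r_{V_i,\ast}(\mathbb{L})\Bigr)\otimes_L\bigl(k(\mathbb{L})_{\mathrm{reg}}\ast \mathcal{G}\bigr),
\]
and combining the two displays gives the desired $W_E^I$-equivariant isomorphism
\[
\eta_{(V_i)_{i\in I}}\colon T^I_V(k(\mathbb{L})_{\mathrm{reg}}\ast \mathcal{G}) \;\cong\;\bigl(k(\mathbb{L})_{\mathrm{reg}}\ast \mathcal{G}\bigr)\boxtimes \bigl(\boxtimes_{i\in I} r_{V_i,\ast}(\mathbb{L})\bigr).
\]

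Finally, the required naturality in $I$ and in $(V_i)_{i\in I}$, and compatibility with composition and exterior tensor products, follows from the corresponding functoriality of the spectral action (Theorem \ref{main-theorem-fargues-scholze-existence-of-the-spectral-action}) together with the naturality of the projection formula. The main thing to be careful about is the bookkeeping of the $W_E^I$-equivariance at each step: the spectral action is natively defined on $\mathrm{IndPerf}(X_{\hat G})$, while the Hecke eigensheaf condition lives in the category of $W_E^I$-equivariant sheaves on $\Bun_n$, and the bridge between the two is the statement (recalled after Theorem \ref{main-theorem-fargues-scholze-existence-of-the-spectral-action}) that $f^{\ast}V$ carries its canonical $W_E$-action in such a way that $f^{\ast}V\ast(-)\cong T_V(-)$ as $W_E$-equivariant functors. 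Granting this, the argument above is essentially formal and, since $k(\mathbb{L})_{\mathrm{reg}}\ast \mathcal{G}$ need not be non-zero, yields exactly the stated (possibly zero) eigensheaf property.
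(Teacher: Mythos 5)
Your proof is correct and follows essentially the same route as the paper: both pass to the spectral action description $T^I_V = (\bigotimes f^\ast V_i) \ast (-)$, use associativity of the monoidal action, apply the projection formula for the closed immersion $i_{\mathbb{L}}$ to identify $(\bigotimes f^\ast V_i)\otimes k(\mathbb{L})_{\mathrm{reg}}$ with $(\bigotimes r_{V_i,\ast}(\mathbb{L}))\otimes_L k(\mathbb{L})_{\mathrm{reg}}$, and then use $L$-linearity to extract the constant $W_E^I$-module. The paper only spells out the single-$V$ case and appeals to ``similarly for every finite set $I$''; you have merely made explicit the general-$I$ bookkeeping and the role of the projection formula.
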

\begin{proof}
  Let $V\in \mathrm{Rep}_L(\hat{G})$. Then there is a natural isomorphism
  \[
    \begin{matrix}
      T_{V}(k(\mathbb{L})_{\mathrm{reg}}\ast \mathcal{F}) & \cong & f^\ast(V)\ast k(\mathbb{L})_{\mathrm{reg}}\ast \mathcal{F} \\
      & \cong & (f^\ast(V)\otimes_{\mathcal{O}_{X_{\hat{G}}}} k(\mathbb{L})_{\mathrm{reg}})\ast \mathcal{F} \\
      & \cong & (r_{V}(\mathbb{L})\otimes_{L} k(\mathbb{L})_{\mathrm{reg}})\ast \mathcal{F} \\
      & \cong & r_V(\mathbb{L})\otimes_{L} k(\mathbb{L})_{\mathrm{reg}}\ast \mathcal{F}
    \end{matrix}
  \]
  (compatible with the $W_{E}$-action), and similarly for every finite
  set $I$, $V_i\in \mathrm{Rep}(\hat{G}), i\in I$.
\end{proof}

This proposition applies in particular to $\mathcal{G}=\mathcal{W}_{\psi}$, and hence the object $\mathrm{Aut}_{\mathbb{L}}$ is a Hecke eigensheaf as desired. {We also know that it is supported on the semistable locus and given by supercuspidal representations, by \Cref{sec:gener-hecke-eigensh-1-hecke-eigensheaf-supported-on-semistable-locus}. However, it could potentially be zero.} That this is not the case will be deduced in the next subsection from the local Langlands correspondence.

\begin{remark}
\label{motivation-from-categorical-langlands}
To motivate the definition of $\mathrm{Aut}_{\mathbb{L}}$, let us recall that Fargues'conjecture admits a strong conjectural enhancement: it is expected that one has an equivalence
$$
\mathrm{IndCoh}(X_{\hat{G}}) \cong D_\lis(\Bun_n,L)
$$
(recall that here $L$ is assumed to be algebraically closed of characteristic $0$), compatibly with the actions of $\mathrm{IndPerf}(X_{\hat{G}})$ on both sides (acting via tensor product on the left), cf.\ \cite[Conjecture IX.1.4.]{fargues2021geometrization}. Via this equivalence, if $\mathbb{L}$ is as above, the Hecke eigensheaf whose existence is predicted by Fargues' conjecture should correspond to $k(\mathbb{L})_{\mathrm{reg}}$ and the Whittaker sheaf should correspond to the structure sheaf $\mathcal{O}_{X_{\hat{G}}}$\footnote{The representation $\mathrm{cInd}^{G_1(E)}_{U(E)} \psi$ is not finitely generated, and thus $\mathcal{W}_\psi$ is not a compact object in $D_\lis(\Bun_n,L)$, but recall that the structure sheaf $\mathcal{O}_{X_{\hat{G}}}$ is seen as an ind-coherent sheaf, cf. \Cref{sec:work-of-fargues-scholze}, so there is no contradiction.}. Since tautologically
$$
{k(\mathbb{L})_{\mathrm{reg}} }\otimes_{\mathcal{O}_{X_{\hat{G}}}} \mathcal{O}_{X_{\hat{G}}} \cong {k(\mathbb{L})_{\mathrm{reg}}},
$$
since the conjectural categorical equivalence should be equivariant for the action of $\mathrm{IndPerf}(X_{\hat{G}})$ and since $k(\mathbb{L}) \in \mathrm{IndPerf}(X_{\hat{G}})$, we see that the definition of $\mathrm{Aut}_{\mathbb{L}}$ is reasonable (and meaningful independently of the conjecture!).
\end{remark}

\subsection{Generalities on the spectral action of $k(\mathbb{L})_i$}
\label{sec:gener-spectr-acti}

We use the notation from before.
This section is devoted to the study of the spectral actions of the objects
\[
  k(\mathbb{L})_i\in \mathrm{Perf}(X_{\hat{G}})
\]
for $i\in \Z$.
Let us set
\[
  F_i(\mathcal{G}):=k(\mathbb{L})_i\ast \mathcal{G}
\]
for $\mathcal{G}\in D_\lis(\Bun_n,L)$.

Recall the object
\[
  \mathcal{A}v_{\mathbb{L}^\prime}=R\Gamma(W_E,\mathbb{L}^\prime\otimes_L f^\ast V_{\mathrm{std}})\in \Perf(X_{\hat{G}})
\]
associated with a $W_E$-representation $\mathbb{L}^\prime$.

\begin{lemma}
  \label{sec:comp-one-stalk-spectral-object-for-e-irreducible}
  We have an equality
  \[
    \mathcal{A}v_{\mathbb{L}^\vee}\cong k(\mathbb{L})_1[-1]\oplus k(\mathbb{L}(\chi_{\mathrm{cyc}}))_1[-2]\in \mathrm{Perf}(X_{\hat{G}}), 
  \]
  where $\chi_{\mathrm{cyc}}$ denotes the cyclotomic character of $W_E$. In particular,
  \[
    \mathrm{Av}_{\mathbb{L}^\vee}(\mathcal{G})=k(\mathbb{L})_1\ast \mathcal{G}[-1] \oplus k(\mathbb{L}(\chi_{\mathrm{cyc}}))_1\ast \mathcal{G}[-2]
  \]
  for each $\mathcal{G}\in D_{\lis}(\Bun_n,L)$.
\end{lemma}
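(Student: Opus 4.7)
The plan is to identify $\mathcal{A}v_{\mathbb{L}^\vee}$ locally at each of the two claimed support points, after first pinning down the support set-theoretically.

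\textbf{Step 1 (support).} I would start by computing the derived fiber of the perfect complex $\mathcal{A}v_{\mathbb{L}^\vee}=R\Gamma(W_E,\mathbb{L}^\vee\otimes f^\ast V_{\mathrm{std}})$ at an arbitrary closed point $\mathbb{L}'\in X_{\hat G}(L)$. Since $f^\ast V_{\mathrm{std}}$ is the universal $W_E$-representation, its fiber at $\mathbb{L}'$ is $\mathbb{L}'$ itself, so this fiber is $R\Gamma(W_E,\mathbb{L}^\vee\otimes \mathbb{L}')$. By Schur, $H^0=\Hom_{W_E}(\mathbb{L},\mathbb{L}')$ is non-zero exactly when $\mathbb{L}'\cong \mathbb{L}$ (both sides are $n$-dimensional and $\mathbb{L}$ is irreducible, so any non-zero map is an isomorphism). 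By local Tate duality for $W_E$, $H^2\cong \Hom_{W_E}(\mathbb{L}',\mathbb{L}(1))^\vee$ is non-zero exactly when $\mathbb{L}'\cong \mathbb{L}(\chi_{\mathrm{cyc}})$. The $2\times 2$ double-complex model (\ref{equation-diagram-for-w-e-invariants}) shows $R\Gamma(W_E,-)$ has vanishing Euler characteristic on finite-dimensional continuous representations, so at every other $\mathbb{L}'$ the whole fiber vanishes. This pins the set-theoretic support to the disjoint closed subset $\{\mathbb{L},\mathbb{L}(\chi_{\mathrm{cyc}})\}$ and gives a canonical splitting $\mathcal{A}v_{\mathbb{L}^\vee}\cong A_{\mathbb{L}}\oplus A_{\mathbb{L}(\chi_{\mathrm{cyc}})}$ with the respective supports.

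\textbf{Step 2 (residual gerbes).} Since $\mathbb{L}$ is irreducible it lies in the smooth locus of $X_{\hat G}$, with residual gerbe $B\Gm$ corresponding to the centre of $\hat G$; that centre acts on $V_{\mathrm{std}}$ by scalars of weight $1$, so $f^\ast V_{\mathrm{std}}$ restricts to $\mathbb{L}\otimes \mathcal{L}_1$ on the gerbe, where $\mathcal{L}_1$ is the weight-one line bundle. Combining this with the fiber computation at $\mathbb{L}'=\mathbb{L}$ yields
\[
A_{\mathbb{L}}|_{B\Gm}\cong R\Gamma(W_E,\mathrm{End}(\mathbb{L}))\otimes \mathcal{L}_1\cong \mathcal{L}_1\oplus \mathcal{L}_1[-1],
\]
using $H^0=L$ (Schur), $H^2=0$ (Tate, under the assumption $\mathbb{L}\not\cong \mathbb{L}(\chi_{\mathrm{cyc}})$), and $H^1=L$ (Euler characteristic). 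The Koszul resolution of $L$ over $L[[t]]$ gives $k(\mathbb{L})_1[-1]|_{B\Gm}\cong \mathcal{L}_1\oplus \mathcal{L}_1[-1]$, so the two sides agree on the residual gerbe. The entirely parallel calculation at $\mathbb{L}(\chi_{\mathrm{cyc}})$ matches both $A_{\mathbb{L}(\chi_{\mathrm{cyc}})}|_{B\Gm}$ and $k(\mathbb{L}(\chi_{\mathrm{cyc}}))_1[-2]|_{B\Gm}$ with $\mathcal{L}_1[-1]\oplus \mathcal{L}_1[-2]$.

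\textbf{Step 3 (globalisation and main obstacle).} To upgrade the residual-gerbe agreement to an isomorphism of perfect complexes I would construct a natural morphism $k(\mathbb{L})_1[-1]\to A_{\mathbb{L}}$ out of the identity class in $H^0(W_E,\mathrm{End}(\mathbb{L}))$ via the adjunction for $i_{\mathbb{L},\ast}$ (projecting onto the weight-one component), and then apply a $\Gm$-equivariant derived Nakayama argument: the formal neighbourhood of $\mathbb{L}$ in $X_{\hat G}$ is $[\mathrm{Spf}(L[[t]])/\Gm]$ with $\Gm$ acting trivially on $t$, and a perfect complex supported at the origin in a fixed $\Gm$-weight should be determined by its restriction to the residual gerbe. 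The same argument at $\mathbb{L}(\chi_{\mathrm{cyc}})$ completes the identification, and the second statement of the lemma then drops out of the spectral-action description of the averaging functor in \Cref{sec:aver-funct-via-1-averaging-functor-via-spectral-action}, extended to $D_\lis$ in \Cref{sec:aver-funct-d_lis-1}. The hard part will be Step 3: making the uniqueness statement rigorous requires computing $\mathrm{Ext}^\ast_{X_{\hat G}}(k(\mathbb{L})_i,k(\mathbb{L})_j)$ from the Koszul resolution and carefully tracking $\Gm$-weights throughout, in order to identify the weight-one sector of $\mathrm{Perf}(X_{\hat G})$ supported at $\mathbb{L}$ with a Morita-theoretic model in which the constructed map is visibly an isomorphism.
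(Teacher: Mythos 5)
Your Steps 1 and 2 are correct, and they essentially reproduce the paper's entire stated justification, which is the single sentence ``the first statement follows by calculating (derived) stalks, which identify with $W_E$-cohomology.'' The concern you raise in Step 3 is genuine and is not visibly addressed in the paper either, so you are right to flag it. However, the uniqueness principle you invoke there is false as stated. On the formal neighbourhood $[\mathrm{Spf}(L[[t]])/\Gm]$ with $\Gm$ acting trivially on $t$, the complexes $L[[t]]/t^n$ (placed entirely in a fixed $\Gm$-weight) are pairwise non-isomorphic perfect complexes supported at the origin, yet for every $n$ the restriction to the residual gerbe is $L\oplus L[1]$ in that weight. So ``supported at the origin, concentrated in $\Gm$-weight one, with residual-gerbe restriction $\mathcal{L}_1\oplus\mathcal{L}_1[-1]$'' does not rule out $A_{\mathbb{L}}\cong (L[[t]]/t^n)[-1]$ for $n\geq 2$. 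The $\Gm$-equivariance buys nothing here because $\Gm$ acts trivially on the formal disk.

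What you do have available is derived Nakayama for morphisms: a map of perfect complexes over a Noetherian local ring that is an isomorphism on derived fibres is an isomorphism. So the shape of Step 3 is sound \emph{provided} you (a) construct a map $k(\mathbb{L})_1[-1]\to A_{\mathbb{L}}$ and (b) verify it is an isomorphism on the derived fibre at $\mathbb{L}$. Point (a) can be done as you suggest; the mathematical content is in (b), and it is not automatic: if $A_{\mathbb{L}}$ were $(L[[t]]/t^n)[-1]$ with $n\geq 2$, the only maps from $k[-1]$ into it are multiples of $1\mapsto t^{n-1}$, and those are \emph{not} isomorphisms on the derived fibre. So (b) is exactly where the argument must say something new. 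The same issue appears in the Morita-theoretic phrasing: $R\mathrm{Hom}_{L[[t]]}(k, k)$ and $R\mathrm{Hom}_{L[[t]]}(k, L[[t]]/t^n)$ have the same underlying graded vector space $L\oplus L[-1]$, and they are distinguished only by the action of the degree-one generator $\epsilon\in \mathrm{Ext}^1_{L[[t]]}(k,k)$, which is non-zero in the first case and zero for $n\geq 2$. Unwinding, $R\mathrm{Hom}_{L[[t]]}(k(\mathbb{L})_1, A_{\mathbb{L}})\cong R\Gamma(W_E,\mathrm{End}(\mathbb{L}))[-1]$, and the $\epsilon$-action is cup product with the Kodaira-Spencer class of the universal deformation $\mathbb{L}_{\mathrm{univ}}$ over $L[[t]]$. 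That this cup product $H^0(W_E,\mathrm{End}(\mathbb{L}))\to H^1(W_E,\mathrm{End}(\mathbb{L}))$ is an isomorphism is equivalent to versality of the deformation of $\mathbb{L}$ parametrised by $X_{\hat G}$ at a smooth point, which is the one extra input your write-up leaves implicit. Once you insert it, the Morita/Koszul-duality argument you gesture at does go through and closes the gap that the paper's one-line justification leaves open.
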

  \begin{proof}
    The first statement follows by calculating (derived) stalks, which identify with $W_E$-cohomology. The second statement follows from \Cref{sec:aver-funct-via-1-averaging-functor-via-spectral-action}.  
  \end{proof}

  In particular, we see that $F_i(-)$ is a direct summand of $\Av_{\mathbb{L}^\vee}(-)[1]$.

\begin{lemma}
  \label{sec:gener-spectr-acti-1-action-of-k-l-i}
  The following hold true for $i\in \Z$:
  \begin{enumerate}
  \item\label{1} If $\mathcal{G}\in D_\lis(\Bun_n,L)$ has support in the connected component $\Bun_n^{\kappa=d}$, then $F_i(\mathcal{G})$ has support on $\Bun_n^{\kappa=d+i}$.
       \item\label{2} For each $\mathcal{G}\in D_\lis(\Bun_n,L)$ the object $F_i(\mathcal{G})$ is supported on the semistable locus and the stalks at basic points are represented by complexes of supercuspidal representations. 
  \item\label{3} If $\mathcal{G}\in D_\lis(\Bun_n,L)$ has support in the non-semistable locus, then $F_i(\mathcal{G})=0$.
  \item\label{4} If $b\in B(G)$ is basic and $\mathcal{G}=j_{b,!}(\mathcal{F}_\pi)$ for some parabolically induced $G_b(E)$-representation $\pi$, then $F_i(\mathcal{G})=0$.
  \end{enumerate}
\end{lemma}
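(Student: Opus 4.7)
The plan is to prove the four parts in the order (1), (3), (4), (2), since the last assertion then follows from (3) and (4) by stratification. The common thread is that $k(\mathbb{L})_i$ is set-theoretically supported at the single smooth point $\mathbb{L}$ of $X_{\hat{G}}$, whose residual gerbe is $B\mathbb{G}_m$ with $\mathbb{G}_m = Z(\hat{G})$ (by Schur irreducibility of $\mathbb{L}$), and $k(\mathbb{L})_i$ is a $Z(\hat{G})$-eigenobject of definite weight $i$.

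First, I would establish (1) as a manifestation of the general principle that the $Z(\hat{G})$-weight of a spectral object controls the shift of the $\kappa$-grading it produces on $\Bun_n$. This is built into the formalism: $T_{V_{\mathrm{std}}} = f^\ast V_{\mathrm{std}}\ast(-)$ has central weight $1$ and shifts the degree of support by $1$, directly from the construction of $\mathrm{Mod}^1_n$ via degree-one modifications (see the map $\alpha$ in \eqref{eq:2-correspondence-defining-averaging-functors}). By monoidality of the spectral action, acting by any object of definite central weight $i$ shifts $\kappa$ by $i$; this yields the claim for $F_i$.

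Next, I would prove (3) and (4) together via the compatibility of the spectral action with parabolic induction recorded in \cite[Chapter IX]{fargues2021geometrization}. In case (3), any $\mathcal{G}$ supported in the non-semistable locus admits a filtration with graded pieces in the image of $\mathrm{Eis}_{P,\ast}$ for proper standard parabolics $P$, as in the proof of \Cref{sec:v_m-1-implies-cuspidal-implies-support-on-non-semistable-locus}. In case (4), the equivalence between geometric Eisenstein at a basic stratum and representation-theoretic parabolic induction on $G_b(E)$ places $j_{b,!}\mathcal{F}_\pi$ in the image of geometric Eisenstein from a proper Levi $M$. On the spectral side, the action on $\mathrm{Eis}_{P,\ast}$-induced objects factors through pullback along $X_{\hat{M}}\to X_{\hat{G}}$. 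Since $\mathbb{L}$ is irreducible, it does not factor through any proper $\hat{M}\subsetneq \hat{G}$, so the pullback of $k(\mathbb{L})_i$ to $X_{\hat{M}}$ vanishes, forcing $F_i(\mathcal{G})=0$.

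Finally, I would derive (2) by decomposing $\mathcal{G}$ via the semi-orthogonal decomposition of $D_\lis(\Bun_n,L)$ by the strata $\Bun_n^b$. The non-semistable pieces are killed by $F_i$ by (3), while at each basic stratum the Bernstein decomposition (available since $\mathrm{char}\, L=0$) writes the stalk as a sum of a supercuspidal summand and summands that are subquotients of proper parabolic inductions; the latter are annihilated by the same argument as in (4). The hardest step will be formulating and applying the precise compatibility of the spectral action with parabolic induction, both at the level of geometric Eisenstein for non-basic strata and in its translation to representation-theoretic parabolic induction at basic strata through Jacquet-Langlands; this is what \cite[Chapter IX]{fargues2021geometrization} provides.
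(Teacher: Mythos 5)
Your overall strategy is sound, but there is a genuine gap in your argument for (4), and the route you choose for (2) and (3)--(4) is different from the paper's and in places more fragile.

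For (4), you claim that the equivalence between geometric Eisenstein at a basic stratum and representation-theoretic parabolic induction ``places $j_{b,!}\mathcal{F}_\pi$ in the image of geometric Eisenstein.'' This is false as stated: for a basic $b$, the functor $\mathrm{Eis}_{P,\ast}=Rp_\ast Rq^!$ produces objects whose support spreads over all strata met by $p$, whereas $j_{b,!}\mathcal{F}_\pi$ is concentrated on the single open stratum $\Bun_n^b$. What is true (\Cref{sec:const-term-funct-prop-geom-cuspi-implies-supercuspidal}) is only the restricted statement $j_b^\ast\circ \mathrm{Eis}_{P,\ast}\cong$ (parabolic induction), and $j_{b,!}j_b^\ast\mathrm{Eis}_{P,\ast}(\mathcal{H})\neq \mathrm{Eis}_{P,\ast}(\mathcal{H})$ in general. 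So your proof of (4) does not go through. The paper avoids this by working at the level of the Bernstein center rather than objects: take the idempotent $e\in \mathcal{O}(X_{\hat G})$ cutting out the non-irreducible locus. Since $\mathbb{L}$ is irreducible, $e\cdot k(\mathbb{L})_i=0$; on the other hand, by compatibility of Fargues--Scholze parameters with parabolic induction, $e$ acts as the identity on $j_{b,!}\mathcal{F}_\pi$ (the semisimple parameter of a parabolically induced $\pi$ is non-irreducible). Then $\mathcal{O}$-linearity of the spectral action gives $F_i(j_{b,!}\mathcal{F}_\pi)=k(\mathbb{L})_i\ast(e\ast j_{b,!}\mathcal{F}_\pi)=(e\cdot k(\mathbb{L})_i)\ast j_{b,!}\mathcal{F}_\pi=0$. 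The same idempotent argument handles (3) (first for ULA objects, then for all objects via density of admissible representations in each Bernstein block) without needing to express an arbitrary $\mathcal{G}$ supported on the non-semistable locus as a colimit of Eisenstein-induced objects.

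Regarding (1) and (2): your central-weight heuristic for (1) is the right intuition, but ``by monoidality the action of any object of central weight $i$ shifts $\kappa$ by $i$'' is a nontrivial fact, not an automatic consequence of monoidality alone. The paper proves (1) more concretely by observing that $F_i$ is a direct summand of $\mathrm{Av}_{\mathbb{L}^\vee}[1]$ (\Cref{sec:comp-one-stalk-spectral-object-for-e-irreducible}), that $\mathrm{Av}_{\mathbb{L}^\vee}$ is built from $T_{V_{\mathrm{std}}}$ plus $W_E$-cohomology, and that $T_{V_{\mathrm{std}}}$ shifts $\kappa$ by one while $W_E$-cohomology preserves it. For (2), you propose to deduce it from (3) and (4) by stratification plus Bernstein decomposition; this can be made to work (using that $F_i$ preserves colimits and that non-supercuspidal blocks are generated by parabolic inductions), but the paper's route is shorter and more robust: $F_i(\mathcal{G})$ is by definition a direct summand of $k(\mathbb{L})_{\mathrm{reg}}\ast\mathcal{G}$, which is a Hecke eigensheaf with irreducible eigenvalue $\mathbb{L}$ by \Cref{hecke-eigensheaf-property-for-k(E)-star-anything}, and \Cref{sec:gener-hecke-eigensh-1-hecke-eigensheaf-supported-on-semistable-locus} then gives (2) immediately.
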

\begin{proof}
  The cohomology $R\Gamma(W_E,-)$ of some $W_E$-equivariant object in $D_\lis(\Bun_n,L)$ with support in $\Bun_n^{\kappa=d}$ has again support in $\Bun_n^{\kappa=d}$. Hence the analog of \eqref{1} with $F_i$ replaced by $\mathrm{Av}_{\mathbb{L}^\vee}$ follows from the same property for the standard Hecke operator $T_{V_{\mathrm{std}}}$, which is clear. Now, \eqref{1} follows by passing to direct summands.
  Point \eqref{2} follows because $F_i(\mathcal{G})$ is a summand of an Hecke eigensheaf with irreducible eigenvalue $\mathbb{L}$ and \Cref{sec:gener-hecke-eigensh-1-hecke-eigensheaf-supported-on-semistable-locus}.

  Let $e\in \mathcal{O}(X_{\hat{G}})$ be the idempotent cutting out the non-irreducible locus, i.e., the non-irreducible locus (a disjoint union of connected components) is the vanishing locus of $e$. By compatibility with parabolic induction, cf.\ \cite[Chapter IX.6.]{fargues2021geometrization}, the idempotent acts trivially on each ULA object with support on the non-semistable locus. But this implies that $e$ acts trivially on each object supported there as for each $b\in B(G)$, which is non-basic, each Bernstein block in $\mathrm{Rep}^\infty_L(G_b(E))$ contains a non-zero admissible representation. This implies \eqref{3} and a similar argument implies \eqref{4}.
\end{proof}

\subsection{Computation of one stalk of $\mathrm{Aut}_{{\mathbb{L}}}$ and conclusion}
\label{sec:computation-stalk-aut-E-at-one-point}

Our aim is to identify the stalks of
\[
  \mathrm{Aut}_{\mathbb{L}}\cong k(\mathbb{L})_{\mathrm{reg}}\ast \mathcal{W}_\psi\cong \bigoplus\limits_{i\in \Z} k(\mathbb{L})_i\ast \mathcal{W}_\psi
\]
at points in the semistable locus.

We recall that we have fixed a completed algebraic closure
$C$ of $E$, giving rise to a (classical) point $\infty\in X_C$ on the
Fargues-Fontaine curve.

Let $b\in B(G)$ be the basic element with $\kappa(b)=1$.
By \cite[Lecture 24]{scholze_weinstein_berkeley_new_version} the space
\[
  \mathcal{M}_{\mathrm{LT},\infty}
\]
of injections of the trivial rank $n$ vector bundle $\mathcal{E}_1$ into $\mathcal{E}_b$ with cokernel supported at $\infty$ is (the diamond associated with) the infinite level Lubin-Tate space. 

For an irreducible $W_E$-representation $\mathbb{L}^\prime$ over $L$ we will denote by
$\mathrm{LL}_1(\mathbb{L}^\prime)$ the local Langlands
correspondent of $\mathbb{L}^\prime$ and by $\mathrm{LL}_b(\mathbb{L}^\prime)$
the local Jacquet-Langlands correspondent of $\mathrm{LL}_1(\mathbb{L}^\prime)$.

\begin{theorem}
  \label{sec:comp-one-stalk-isotypic-component-of-cohom-of-lubin-tate-space}
  Let $\mathbb{L}$ be an irreducible $W_E$-representation over $L$ and $\mathrm{LL}_1(\mathbb{L})\in \mathrm{Rep}^\infty_L G_1(E)$ its local Langlands correspondent. Then
  \[
    {j_b^\ast}T_{V_\mathrm{std}}({j_{1,!}}\mathcal{F}_{\mathrm{LL}_1(\mathbb{L})})\cong \mathbb{L}\otimes_{L} \mathcal{F}_{\mathrm{LL}_b(\mathbb{L})}
  \]
  as $W_E$-equivariant objects in $D_\lis(\Bun_n,L)$.
\end{theorem}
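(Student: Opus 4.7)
The plan is to reduce the computation to the cohomology of the Lubin-Tate tower and then invoke the realization of the local Langlands correspondence in that cohomology.

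First, I would perform base change. By the description recalled from \cite[Lecture 24]{scholze_weinstein_berkeley_new_version}, the fiber product
\[
\mathrm{Mod}^1_n \times_{(\overleftarrow{h},\overrightarrow{h}),\Bun_n\times \Bun_n} (\Bun_n^1 \times \Bun_n^b)
\]
is identified with $[\mathcal{M}_{\mathrm{LT},\infty}/(\underline{G_1(E)}\times \underline{G_b(E)})]$, equipped with its natural map to $\Div^1$ via $\alpha$ (corresponding to the location of the cokernel). Using proper base change along $j_b\times \mathrm{id}_{\Div^1}$ (recall that $\overrightarrow{h}$ is proper by the remarks preceding \Cref{sec:averaging-functors-1-definition-averaging-functor}), the projection formula, and the equivalence $D_\lis([\ast/\underline{G_1(E)}],L) \cong D(\mathrm{Rep}^\infty_L G_1(E))$, one rewrites
\[
j_b^\ast T_{V_{\mathrm{std}}}(j_{1,!}\mathcal{F}_{\mathrm{LL}_1(\mathbb{L})})
\]
as the object of $D_\lis(\Bun_n^b,L)^{BW_E}$ corresponding to
\[
\bigl(R\Gamma_c(\mathcal{M}_{\mathrm{LT},\infty},L)\otimes^{\mathbb{L}}_{L[G_1(E)]} \mathrm{LL}_1(\mathbb{L})\bigr)\bigl(\tfrac{n-1}{2}\bigr)[n-1],
\]
with $G_b(E)\times W_E$ acting through its natural action on the Lubin-Tate tower.

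The second step is to invoke the realization of the local Langlands correspondence in the $\ell$-adic cohomology of the Lubin-Tate tower (Harris-Taylor, Boyer, and, in the present formalism, \cite{scholze_weinstein_berkeley_new_version} together with the recent results of Hansen-Kaletha-Weinstein cited in the introduction). Since $\mathbb{L}$ is irreducible, $\mathrm{LL}_1(\mathbb{L})$ is supercuspidal, so its $\pi$-isotypic part in $R\Gamma_c(\mathcal{M}_{\mathrm{LT},\infty},L)$ is concentrated in degree $n-1$ and, as a $G_b(E)\times W_E$-representation, is canonically $\mathrm{LL}_b(\mathbb{L})\otimes_L \mathbb{L}$, up to the appropriate Tate twist. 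Feeding this back into the formula of the first step yields the desired isomorphism.

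The main obstacle is not conceptual but rather a careful bookkeeping of normalizations: one must track the shift $[n-1]$ and the half-integer Tate twist $(\tfrac{n-1}{2})$ appearing in the definition of $T_{V_{\mathrm{std}}}$ (which is why we assumed a square root of $q$ in $\Lambda$, cf.\ \Cref{sec:notation}) against the conventions used in the statement of Harris-Taylor's theorem, and verify that the $W_E$-equivariance on both sides matches via the identification $\Div^1 = [\mathrm{Spd}(\breve{E})/\varphi^{\Z}]$ recalled in \Cref{sec:stack-of-vector-bundles-on-the-ff-curve}. Once this is in place, there is nothing left to prove.
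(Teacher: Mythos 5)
Your sketch follows essentially the same route as the paper, which simply cites the proof of \cite[Theorem IX.7.4]{fargues2021geometrization} together with the known description of the $\ell$-adic cohomology of the Lubin-Tate tower; the base-change reduction to $R\Gamma_c(\mathcal{M}_{\mathrm{LT},\infty},L)\otimes^{\mathbb{L}}_{L[G_1(E)]}\mathrm{LL}_1(\mathbb{L})$ you describe is precisely what is packaged inside that reference. One small correction: for this particular statement (the Lubin-Tate stratum $\kappa(b)=1$) the input is Harris-Taylor/Boyer as formulated in \cite[Theorem IX.7.4]{fargues2021geometrization}, not Hansen-Kaletha-Weinstein, which the paper only invokes afterwards to propagate the computation to the other basic strata.
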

\begin{proof}
This is discussed in the proof of \cite[Theorem IX.7.4.]{fargues2021geometrization} and follows from the known description of the $\ell$-adic cohomology of the Lubin-Tate tower.
\end{proof}

This incarnation of the local Langlands correspondence allows us to calculate one stalk of {$\mathrm{Aut}_{\mathbb{L}}$}.

\begin{theorem}
  \label{sec:comp-one-stalk-stalk-at-kappa-1}
  Let as before $b\in B(G)$ {basic} with $\kappa(b)=1$. Then
  \[
    j_b^\ast \mathrm{Aut}_{\mathbb{L}}\cong \mathcal{F}_{\mathrm{LL}_b(\mathbb{L})}.
  \]
\end{theorem}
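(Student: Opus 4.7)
The plan is to compute $j_b^\ast \mathrm{Aut}_{\mathbb{L}}$ by first replacing $\mathcal{W}_\psi$ with a single supercuspidal sheaf $j_{1,!} \mathcal{F}_{\pi_0}$, then translating the spectral action of $k(\mathbb{L})_1$ into the geometric averaging functor $\mathrm{Av}_{\mathbb{L}^\vee}$ via \Cref{sec:comp-one-stalk-spectral-object-for-e-irreducible}, and finally invoking the Lubin--Tate computation of \Cref{sec:comp-one-stalk-isotypic-component-of-cohom-of-lubin-tate-space}.

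First, set $\pi_0 := \mathrm{LL}_1(\mathbb{L})$, whose Fargues--Scholze parameter equals $\mathbb{L}$ by \cite[Theorem IX.7.4]{fargues2021geometrization}. Using uniqueness of Whittaker models ($\pi_0$ is generic and occurs with multiplicity one in $\mathrm{cInd}_{U(E)}^{G_1(E)}\psi$) together with irreducibility of $\mathbb{L}$ (which prevents any proper parabolic induction from having semisimplified Fargues--Scholze parameter equal to $\mathbb{L}$), an appropriate Fargues--Scholze Bernstein-center element $e_{\mathbb{L}} \in \mathcal{O}(X_{\hat{G}})$ supported in a clopen neighbourhood of $\mathbb{L}$ gives $e_{\mathbb{L}} \mathcal{W}_\psi \cong j_{1,!} \mathcal{F}_{\pi_0}$. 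Since $k(\mathbb{L})_{\mathrm{reg}}$ is itself supported at $\mathbb{L}$, we obtain $\mathrm{Aut}_{\mathbb{L}} \cong k(\mathbb{L})_{\mathrm{reg}} \ast j_{1,!} \mathcal{F}_{\pi_0}$. By \Cref{sec:gener-spectr-acti-1-action-of-k-l-i} each graded piece $F_i(j_{1,!} \mathcal{F}_{\pi_0}) := k(\mathbb{L})_i \ast j_{1,!} \mathcal{F}_{\pi_0}$ lives on $\Bun_n^{\kappa = i}$, so only $i = 1$ contributes at $b$, and $j_b^\ast \mathrm{Aut}_{\mathbb{L}} = j_b^\ast F_1(j_{1,!} \mathcal{F}_{\pi_0})$.

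Next, by \Cref{sec:comp-one-stalk-spectral-object-for-e-irreducible} we have
\[
  \mathrm{Av}_{\mathbb{L}^\vee}(j_{1,!} \mathcal{F}_{\pi_0}) \cong F_1(j_{1,!} \mathcal{F}_{\pi_0})[-1] \oplus F_1^{\mathrm{cyc}}(j_{1,!} \mathcal{F}_{\pi_0})[-2],
\]
where $F_1^{\mathrm{cyc}} := k(\mathbb{L}(\chi_{\mathrm{cyc}}))_1 \ast (-)$ is a Hecke eigensheaf with eigenvalue $\mathbb{L}(\chi_{\mathrm{cyc}}) \neq \mathbb{L}$. Using that the averaging functor factors through $T_{V_{\mathrm{std}}}$ integrated against $\mathbb{L}^\vee$ over $\Div^1$, and applying \Cref{sec:comp-one-stalk-isotypic-component-of-cohom-of-lubin-tate-space} (which gives $j_b^\ast T_{V_{\mathrm{std}}}(j_{1,!} \mathcal{F}_{\pi_0}) \cong \mathbb{L} \otimes_L \mathcal{F}_{\mathrm{LL}_b(\mathbb{L})}$ equivariantly for $W_E$), one finds
\[
  j_b^\ast \mathrm{Av}_{\mathbb{L}^\vee}(j_{1,!} \mathcal{F}_{\pi_0}) \cong R\Gamma(W_E, \mathrm{End}(\mathbb{L})) \otimes_L \mathcal{F}_{\mathrm{LL}_b(\mathbb{L})}
\]
up to the Tate twist and shift dictated by the $(\tfrac{n-1}{2})[n-1]$ normalisation of $\mathrm{Av}$. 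This entire stalk lies in the Bernstein block of $\mathrm{LL}_b(\mathbb{L})$ inside $\mathrm{Rep}^\infty_L(G_b(E))$; since $j_b^\ast F_1^{\mathrm{cyc}}(j_{1,!} \mathcal{F}_{\pi_0})$ would instead be a $G_b(E)$-representation in the disjoint block of $\mathrm{LL}_b(\mathbb{L}(\chi_{\mathrm{cyc}}))$, it must vanish, and the entire stalk above is identified with $j_b^\ast F_1(j_{1,!} \mathcal{F}_{\pi_0})[-1]$.

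The main obstacle is then the careful bookkeeping of shifts and Tate twists: one must identify $R\Gamma(W_E, \mathrm{End}(\mathbb{L}))$ --- concentrated in cohomological degrees $0$, $1$, $2$ with $H^0 = L$ (Schur) and $H^2 = L$ (Tate duality) and the appropriate $H^1$ --- together with the $[-1]$ shift from the lemma and the shift and twist in $\mathrm{Av}$, so that the cohomological contributions assemble into exactly one copy of $\mathcal{F}_{\mathrm{LL}_b(\mathbb{L})}$ in degree zero. A secondary technical point is verifying the ULA property needed to apply \Cref{sec:gener-hecke-eigensh-1-hecke-eigensheaf-supported-on-semistable-locus} so that the support-on-semistable-locus and Bernstein-block arguments used for the intermediate sheaves are legitimate; once this is in place, the identification $j_b^\ast \mathrm{Aut}_{\mathbb{L}} \cong \mathcal{F}_{\mathrm{LL}_b(\mathbb{L})}$ follows.
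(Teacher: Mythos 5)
Your proposed first step is the fatal error: there is no Bernstein center idempotent $e_{\mathbb{L}}$ with $e_{\mathbb{L}}\,\mathcal{W}_\psi \cong j_{1,!}\mathcal{F}_{\pi_0}$. The Bernstein center of a supercuspidal block $\mathcal{B}_{\mathfrak{s}}$ of $\mathrm{Rep}^\infty_L\,G_1(E)$ is $\mathfrak{z}\cong L[T^{\pm 1}]$, which is a domain and hence has no nontrivial idempotents; consequently the finest projector available to you (from either the classical or the Fargues--Scholze Bernstein center) cuts out the entire block, giving the full Whittaker Bernstein component $\mathcal{W}_{\psi,\mathfrak{s}}$. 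Under the Morita equivalence $\mathcal{B}_{\mathfrak{s}}\cong\mathrm{Mod}_{\mathfrak{z}}$, $\mathcal{W}_{\psi,\mathfrak{s}}$ corresponds to the rank-one \emph{free} $\mathfrak{z}$-module (it is a projective generator), whereas $\pi_0$ corresponds to the residue field $k(x_0)$. The difference between the two is not cosmetic: the paper's proof factors $\mathrm{Av}_{\mathbb{L}^\vee}[1]$ through a functor $M\otimes^L_{\mathfrak{z}}(-)$ with $M\cong k(x_0)\oplus k(x_1)[-1]$ a perfect $\mathfrak{z}$-complex, and then evaluates it on the free module $\mathfrak{z}$, obtaining $M$ itself. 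Evaluating instead on $k(x_0)$, as your substitution forces, produces the derived tensor product $M\otimes^L_{\mathfrak{z}} k(x_0)\cong k(y_0)[1]\oplus k(y_0)$, i.e.\ \emph{two} copies of $\mathcal{F}_{\mathrm{LL}_b(\mathbb{L})}$ in degrees $-1$ and $0$ --- indeed, $R\Gamma(W_E,\mathrm{End}(\mathbb{L}))$ is nontrivial in degrees $0$ and $1$ (generically), so the ``bookkeeping of shifts and Tate twists'' cannot rescue a single copy in degree zero. So even setting aside the false identification of $e_{\mathbb{L}}\mathcal{W}_\psi$, the endgame of your proposal is incompatible with the statement it is meant to prove.

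There is a second genuine gap in the step where you dispose of the cyclotomic summand. You claim $j_b^\ast F_1^{\mathrm{cyc}}$ vanishes because $\mathrm{LL}_b(\mathbb{L}(\chi_{\mathrm{cyc}}))$ lies ``in the disjoint block'' from $\mathrm{LL}_b(\mathbb{L})$; but $\chi_{\mathrm{cyc}}$ is unramified, so $\mathrm{LL}_b(\mathbb{L}(\chi_{\mathrm{cyc}}))$ is an \emph{unramified twist} of $\mathrm{LL}_b(\mathbb{L})$, hence lies in the very same Bernstein block. Blocks cannot separate the two contributions. This is precisely why the paper's proof has to work harder: after computing $j_b^\ast\mathrm{Av}_{\mathbb{L}^\vee}(\mathcal{W}_\psi)[1]\cong\mathcal{F}_{\mathrm{LL}_b(\mathbb{L})}\oplus\mathcal{F}_{\mathrm{LL}_b(\mathbb{L}(\chi_{\mathrm{cyc}}))}[-1]$, it distinguishes the $k(\mathbb{L})_1$ summand from the $k(\mathbb{L}(\chi_{\mathrm{cyc}}))_1$ summand by a non-vanishing argument combined with iterating the same analysis with $\mathbb{L}$ replaced by $\mathbb{L}(\chi_{\mathrm{cyc}})$, ruling out the possibility that $k(\mathbb{L})_1\ast\mathcal{W}_\psi$ sits in the wrong degree. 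Neither the replacement of $\mathcal{W}_\psi$ by $\pi_0$ nor the block-disjointness shortcut can be repaired without essentially reproducing this structure.
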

\begin{proof}
 We first analyze
\[
  \Av_{\mathbb{L}^\vee}(\mathcal{W}_\psi)[1]\cong  {k(\mathbb{L})_1}\ast \mathcal{W}_\psi \oplus k(\mathbb{L}(\chi_{\mathrm{cyc}}))_1[-1]\ast \mathcal{W}_{\psi}.
 \]
 By \Cref{sec:gener-spectr-acti-1-action-of-k-l-i} we can replace here $\mathcal{W}_\psi$ by its supercuspidal part.
 Let $\mathcal{B}:=\mathcal{B}_{\mathfrak{s}}\subseteq \mathrm{Rep}_L^\infty G_1(E)$ be a supercuspidal block, and let
    \[
      \mathcal{W}_{\psi, \mathfrak{s}}
    \]
    be the Bernstein component of $\mathcal{W}_\psi$ in $\mathcal{B}$.
    Let
    \[
      \mathfrak{z}
    \]
    be the center of the block $\mathcal{B}$. Then it is known (cf.\ \cite{bernstein1984centre}, \cite{bushnell2003generalized}) that
    \[
      \mathfrak{z}\cong \mathrm{End}_{\mathcal{B}}(\mathcal{W}_{\psi,\mathfrak{s}})\cong L[T^{\pm 1}],
    \]
    that $\mathcal{W}_{\psi,\mathfrak{s}}\in \mathcal{B}$ is projective, and that the functor
    \[
     \mathcal{B}\cong \mathrm{Mod}_{\mathfrak{z}},\ \pi\mapsto \mathrm{Hom}_{\mathcal{B}}(\mathcal{W}_{\psi,\mathfrak{s}},\pi).
   \]
   is an equivalence.
   Implicit in these statements we used the existence and uniqueness of Whittaker models. More precisely, for each irreducible, supercuspidal representation $\pi\in \mathcal{B}$ (such a $\pi$ is unique up to some unramified twist) we have
    \[
      \begin{matrix}
        &  \mathrm{Hom}_{\mathcal{B}}(\mathcal{W}_{\psi,\mathfrak{s}},\pi) \\
        \cong & \mathrm{Hom}_{\Rep^\infty_L G_1(E)}(W_{\psi},\pi) \\
        \cong & \mathrm{Hom}_{\Rep^\infty_L G_1(E) } (\pi^\vee,\mathrm{Ind}_{N(E)}^{G_1(E)} \psi) \\
        \cong & L,
      \end{matrix}
    \]
    where $(-)^\vee$ denotes the smooth dual, and the last step uses the uniqueness and existence for irreducible, supercuspidal representations (cf.\ \cite{gelfand_kazhdan_representations}).

    Let
    \[
      \mathcal{C}\subseteq \Rep^\infty_L G_b(E)
    \]
    be a supercuspidal Bernstein block. By \eqref{2} it suffices to describe the compositions
    \[
      F_{\mathcal{B},\mathcal{C}}\colon \mathcal{B}\subseteq D_{\lis}(\Bun_n,L)\xrightarrow{\Av_{\mathbb{L}^\vee}[1]} D_\lis(\Bun_n^{\mathrm{ss}},L)\to D(\mathcal{C})
    \]
    for all such $\mathcal{B}, \mathcal{C}$. The block $\mathcal{C}$ is again equivalent to modules over its center $R$, and thus (by commutation {of $\mathcal{F}_{\mathcal{B},\mathcal{C}}$ with direct sums}) we have
    \[
      F_{\mathcal{B},\mathcal{C}}(-)\cong \mathcal{M}\otimes_{\mathfrak{z}}^{L}(-)
    \]
    for some complex $M\in D(R)\cong D(\mathcal{C})$ with action by $\mathfrak{z}$. As $F_{\mathcal{B},\mathcal{C}}$ commutes with direct products and preserves compact objects, $M$ is a perfect $\mathfrak{z}$-complex. Let $x\in \Spec(\mathfrak{z})$ be a closed point, with corresponding irreducible, supercuspidal representation $\pi^\prime$. Let
    $\mathbb{L}^\prime$ the the Langlands correspondent of $\pi^\prime$.
    Then by \Cref{sec:comp-one-stalk-isotypic-component-of-cohom-of-lubin-tate-space}
    \[
      M\otimes_{\mathfrak{z}}^L k(x)=0,
    \]
    if $\mathbb{L}^\prime$ is not isomorphic to $\mathbb{L}$ or {$\mathbb{L}(\chi_{\mathrm{cyc}})$}. Let $\pi:=\mathrm{LL}_1(\mathbb{L})$ with corresponding closed point $x_0\in \Spec(\mathfrak{z})$, and let $x_1\in \Spec(\mathfrak{z})$ be the point corresponding to {$\mathrm{LL}_1(\mathbb{L}(\chi_{\mathrm{cyc}}))$}.
    Let $y_0, y_1\in \Spec(R)$ be the closed points corresponding to  $\mathrm{LL}_b(\mathbb{L})$ resp. {$\mathrm{LL}_b(\mathbb{L}(\chi_{\mathrm{cyc}}))$}.
    Then we can deduce from \Cref{sec:comp-one-stalk-isotypic-component-of-cohom-of-lubin-tate-space} that
    \[
      M\otimes_{\mathfrak{z}}^L k(x_0)\cong k(y_0)[1]\oplus k(y_0),
    \]
    while
    \[
            M\otimes_{\mathfrak{z}}^L k(x_1)\cong k(y_1)\oplus k(y_1)[-1].
          \]
          From here we can deduce that $M\cong k(x_0)\oplus k(x_1)[-1]$ (as a $\mathfrak{z}$-complex). This implies
          \[
            j^\ast_b\mathrm{Av}_{\mathbb{L}^{\vee}}(\mathcal{W}_{\psi})[1]\cong \mathcal{F}_{\mathrm{LL}_b(\mathbb{L})}\oplus \mathcal{F}_{\mathrm{LL}_b(\mathbb{L}(\chi_{\mathrm{cyc}}))}[-1].
          \]
          This first implies that 
          $$
           k(\mathbb{L})_1\ast \mathcal{W}_{\psi} \neq 0.
           $$
           Indeed, if it were zero, we would get 
     $$
      k(\mathbb{L}(\chi_{\mathrm{cyc}}))_1\ast \mathcal{W}_\psi[-1] \cong \mathcal{F}_{\mathrm{LL}_b(\mathbb{L})}\oplus \mathcal{F}_{\mathrm{LL}_b(\mathbb{L}(\chi_{\mathrm{cyc}}))}[-1].
      $$
      But applying the same reasoning as above to $\mathbb{L}(\chi_{\rm cyc})$ instead of $\mathbb{L}$, we have 
      $$
    {k(\mathbb{L}(\chi_{\rm cyc}))_1}\ast \mathcal{W}_\psi \oplus k(\mathbb{L}(\chi_{\mathrm{cyc}}^2))_1[-1]\ast \mathcal{W}_{\psi} \cong  \mathcal{F}_{\mathrm{LL}_b(\mathbb{L}(\chi_{\rm cyc}))}\oplus \mathcal{F}_{\mathrm{LL}_b(\mathbb{L}(\chi_{\mathrm{cyc}}^2))}[-1] 
      $$      
       contradicting this isomorphism (since $\mathcal{F}_{\mathrm{LL}_b(\mathbb{L})}[1]$ is not a direct summand of the right hand side). Similarly, we must have
       $$
           k(\mathbb{L}(\chi_{\rm cyc}))_1\ast \mathcal{W}_{\psi} \neq 0.
           $$
           Therefore, we either have
          \[
            k(\mathbb{L})_1\ast \mathcal{W}_{\psi}\cong j_{b,!}\mathcal{F}_{\mathrm{LL}_b(\mathbb{L})},
          \]
          or
          \[
            k(\mathbb{L})_1\ast \mathcal{W}_{\psi}\cong j_{b,!}\mathcal{F}_{\mathrm{LL}_b(\mathbb{L}(\chi_{\mathrm{cyc}}))}[-1].
          \]
                   Assume the second. This implies
          \[
            k(\mathbb{L}(\chi_{\mathrm{cyc}}))_1\ast \mathcal{W}_\psi[-1]\cong j_{b,!}\mathcal{F}_{\mathrm{LL}_b(\mathbb{L})},
          \]
          i.e., that
          \[
            \mathrm{Av}_{\mathbb{L}(\chi_{\mathrm{cyc}})^\vee}(\mathcal{W}_\psi)=k(\mathbb{L}(\chi_{\mathrm{cyc}}))_1\ast \mathcal{W}_\psi[-1]\oplus k(\mathbb{L}(\chi_{\mathrm{cyc}}^{2}))_1\ast \mathcal{W}_\psi[-2]
          \]
          has a contribution in degree $0$. But what we have proven so far (with $\mathbb{L}$ replaced by $\mathbb{L}(\chi_{\mathrm{cyc}})$) implies that this is not the case. This finishes the proof.
\end{proof}

By \Cref{sec:farg-conj-gl_n-1-hecke-eigensheaf-with-irreducible-stalk} and \Cref{sec:comp-one-stalk-stalk-at-kappa-1} we can conclude that each basic stalk of $\mathrm{Aut}_{\mathbb{L}}$ is irreducible. We also know it is a single supercuspidal representation, put in some degree. Let us identify this supercuspidal representation. 

We will only explain how to identify the representation of $G_b(E)$ corresponding to $j_b^\ast \mathrm{Aut}_{\mathbb{L}}$ for $b$ basic with $\kappa(b) \geq 1$. The case where $\kappa(b) \leq 1$ is dealt with similarly, by replacing in what follows the standard representation $V_{\rm std}$ of $\hat{G}$ by its dual. 

We argue inductively. \Cref{sec:comp-one-stalk-stalk-at-kappa-1} tells us that 
$$
j_{b}^\ast \mathrm{Aut}_{\mathbb{L}} \cong \mathcal{F}_{\mathrm{LL}_b(\mathbb{L})},
$$
where $b$ is the basic element with $\kappa(b)=1$. 

Let $b\in B(G)$ basic with $\kappa(b)\geq 1$, and $c$ basic with $\kappa(c)=\kappa(b)+1$. Assume we know that $j_b^\ast \mathrm{Aut}_{\mathbb{L}}$ agrees up to shift with $\mathcal{F}_{\mathrm{LL}_b(\mathbb{L})}$:
$$
 j_b^\ast \mathrm{Aut}_{\mathbb{L}}\cong \mathcal{F}_{\mathrm{LL}_b(\mathbb{L})}[k_b],
 $$
 $k_b\in \Z$. 

By the Hecke eigensheaf property of $\mathrm{Aut}_{\mathbb{L}}$ and our assumption, we
 have
 $$
 j_c^*\mathrm{Aut}_{\mathbb{L}} \otimes_L \mathbb{L} \cong j_c^\ast T_{V_{\rm std}}(j_{b,!} \mathcal{F}_{\mathrm{LL}_b(\mathbb{L})}[k_b]).
 $$
 Therefore, by \cite[Theorem 6.5.2]{kaletha_weinstein_on_the_kottwitz_conjecture_for_local_shimura_varieties}, we deduce that
$$
 \mathrm{tr}(j_c^*\mathrm{Aut}_{\mathbb{L}}) = (-1)^{k_b}\mathrm{tr}(\mathrm{LL}_c(\mathbb{L}))
 $$
  in $\mathrm{Dist}(G_c(E)_{\rm ell},L)^{G_c(E)}$, where $G_c(E)_{\rm ell}$ denotes the elliptic regular locus in $G_c(E)$.
 Since $j_c^*\mathrm{Aut}_{\mathbb{L}}$ corresponds to an irreducible supercuspidal representation, and since such representations are entirely determined by the restriction of their Harish-Chandra distribution character to the elliptic locus, this forces $j_c^*\mathrm{Aut}_{\mathbb{L}}$ to agree with $\mathcal{F}_{\mathrm{LL}_c(\mathbb{L})}$, up to some shift $k_c \in \Z$ (having the same parity as $k_b$). This concludes the induction step. 
 
 Finally, it remains to show that all the shifts $k_b$ appearing above are zero, i.e. that $\mathrm{Aut}_E$ is concentrated in degree $0$ (we so far only know that $k_b=0$ if $b$ is such that $\kappa(b)=1$). But concentration in degree $0$ is implied by \cite[Theorem 1.1]{hansensupercuspidal}.
\\

Altogether, this concludes the proof of \Cref{sec:introduction-2-main-result-on-fargues-sheaf}.

\begin{remark}
\label{remark-concentration-in-degree-0}
Let $b, c\in B(G)$ basic with $\kappa(c)=\kappa(b)+1$. We expect that
$$
R\Gamma(\mathcal{M}_{b,c}, L)
$$
is concentrated in degrees $[0,n-1]$. When $b=1$, this can be deduced from the fact that $\mathcal{M}_{b,c}$ is the infinite level Lubin-Tate space, which is an inverse limit of smooth Stein rigid-analytic varieties, but we do not know how to prove it for all $b$. If this were true, using commutation of $T_{V_{\rm std}}$ with Verdier duality, one could deduce concentration in degree $0$ of $j_b^\ast \mathrm{Aut}_{\mathbb{L}}$ for all $b$ (we learnt this argument from a paper of Fargues, \cite{fargues_involution_zelevinsky}; it was recently reconsidered by Hansen, \cite{hansensupercuspidal}).

Note that $\mathcal{M}_{b,c}$ is Zariski-closed in the open perfectoid unit ball $X$ of dimension $n$ parametrizing maps from $\mathcal{E}_b$ to $\mathcal{E}_c$, cut out by one equation. So in fact concentration in degree $0$ of $\mathrm{Aut}_{\mathbb{L}}$ would follow from the statement that if $Z$ is a Zariski-closed subspace in the $n$-dimensional affinoid perfectoid ball, $R\Gamma(Z,L)$ vanishes in degrees bigger than the Krull dimension of $Z$\footnote{When applying this result to $\mathcal{M}_{b,c}$, which lives in the $n$-dimensional perfectoid \textit{open} unit ball, one must write $\mathcal{M}_{b,c}$ as the filtered colimit of its intersections with affinoid perfectoid balls of smaller radius, which may create some $R^1\lim$ contribution in the cohomology. However, it can be get rid of by looking at the sign of the trace distribution.}. We refer the interested reader to \cite{hansensupercuspidal} for more on this.
\end{remark}

\bibliography{biblio.bib} \bibliographystyle{plain}

\end{document}